\documentclass[12pt]{amsart}

\usepackage{amsthm}
\usepackage{enumerate}
\usepackage{amssymb}
\usepackage{hyperref}

\textheight 22.5truecm 
\textwidth 14.5truecm
\setlength{\oddsidemargin}{0.35in}
\setlength{\evensidemargin}{0.35in}
\setlength{\topmargin}{-.5cm}

\newtheorem{thm}{Theorem}[section]
\newtheorem{defn}[thm]{Definition}
\newtheorem{lem}[thm]{Lemma}
\newtheorem{prop}[thm]{Proposition}
\newtheorem{cor}[thm]{Corollary}
\newtheorem{rem}[thm]{Remark}
\newtheorem*{nota}{Notation}
\newtheorem*{thmd}{Theorem}

\title[Spectral synthesis in multidimensional Fourier algebras]{Spectral synthesis in multidimensional Fourier algebras}

\author{Kanupriya}
\address{Kanupriya,\newline\indent Department of Mathematics,\newline\indent Indian Institute of Technology Delhi,\newline\indent New Delhi - 110016, India.}
\email{kanupriyawadhawan3@gmail.com}
\author{N. Shravan Kumar}
\address{N. Shravan Kumar,\newline\indent Department of Mathematics,\newline\indent Indian Institute of Technology Delhi,\newline\indent New Delhi - 110016, India.}
\email{shravankumar.nageswaran@gmail.com}

\begin{document}
	
\keywords{Extended Haagerup tensor product, $\sigma$-Haagerup tensor product, multidimensional Fourier algebra, spectral synthesis, Ditkin set, injection theorem, inverse projection theorem}
	
\subjclass[2020]{Primary 43A45, 43A77, 47L25; Secondary 43A15, 46L07}
	
\begin{abstract}
    Let $G$ be a locally compact group and let $A^n(G)$ denote the $n$-dimensional Fourier algebra, introduced by Todorov and Turowska. We investigate spectral synthesis properties of the multidimensional Fourier algebra $A^n(G).$ In particular, we prove versions of the subgroup lemma, injection, and inverse projection theorems for both spectral sets and Ditkin sets. Additionally, we provide a result on the parallel synthesis between $A^n(G)$ and $A^{n+1}(G)$ and finally prove Malliavin's theorem.
\end{abstract}
	
\maketitle
	
\section{Introduction}
	
    The Fourier algebra $A(G)$ of a (non-abelian) locally compact group $G$ has been extensively studied since its appearance, in 1964, in the pioneering work of Eymard \cite{Eym1}. This algebra is important because it is a commutative, regular, and semisimple Banach algebra with its carrier space homeomorphic to $G.$ Also, if $G$ is abelian, then $A(G),$ via the Fourier transform, is isometrically isomorphic to $L^1(\widehat{G}).$ Here $\widehat{G}$ denotes the Pontrjagin dual of $G.$
	
    Studying the properties of the group algebra of a locally compact abelian group is one of the classical topics of abstract harmonic analysis. Among all, the study of spectral sets is very notable. Some landmarks in this context include the injection theorem, the projection theorem, and Malliavin's theorem.
	
    In 2010, Todorov and Turowska \cite{ToTu} introduced the multidimensional analogue of the Fourier algebra. This was motivated by the fact that, when $G$ is abelian, the two-dimensional Fourier-Stieltjes algebra, via the Fourier transform, coincides with the space of bimeasures on $\widehat{G}\times\widehat{G}$ \cite{GrSc}. This paper continues to study the spectral synthesis properties for the multidimensional Fourier algebra.
	
	If $H$ is a closed subgroup of a locally compact group $G$ and $E\subseteq H$ is closed, then $E$ is a set of synthesis for $A(G)$ if and only if $E$ is a set of synthesis for $A(H).$ This is known as the injection theorem, a classical result in the abelian case and is due to Reiter \cite{Rei}. The nonabelian version is due to Kaniuth and Lau \cite{KL} and independently by Parthasarathy and Prakash \cite{PaPr1}. For other extensions see \cite{Loh, Der1, PaPr2, PaNSK}. In Corollary \ref{Inj_Thm_Sp_Syn} of Section 3, we prove the injection theorem for the spectral synthesis concerning the multidimensional Fourier algebra. More precisely, we prove the following.
    \begin{thmd}
        Suppose that $H$ is a closed subgroup of a locally compact group $G$ and $X$ an $A^n(G)$-submodule of $VN^n(G).$ Then a closed subset $E$ of $H^n$ is a set of $X_H$-synthesis for $A^n(H)$ if and only if $E$ is a set of $X$-synthesis for $A^n(G).$
    \end{thmd}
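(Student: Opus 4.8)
The engine of the argument is the restriction homomorphism and its adjoint, so the plan is first to set these up and then to transport the synthesis condition across them. Let $r\colon A^n(G)\to A^n(H)$ be the restriction map $u\mapsto u|_{H^n}$. By the subgroup lemma proved earlier, $r$ is a surjective (complete) quotient map with kernel $I(H^n)$, and $H^n$ is a set of synthesis for $A^n(G)$; consequently the adjoint $r^{*}\colon VN^n(H)\to VN^n(G)$ is a weak-$*$ homeomorphic, completely isometric embedding onto $I(H^n)^{\perp}=\{T\in VN^n(G):\operatorname{supp}T\subseteq H^n\}$, and it intertwines the module actions, $r^{*}(r(u)\cdot S)=u\cdot r^{*}(S)$. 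I would identify $X_H$ with $(r^{*})^{-1}(X)=r^{*}(VN^n(H))\cap X$, which is a weak-$*$ closed $A^n(H)$-submodule of $VN^n(H)$ because $r^{*}$ is an injective, weak-$*$ continuous module map over the surjection $r$.

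The second ingredient is support compatibility: for $S\in VN^n(H)$ one has $\operatorname{supp}_{G^n}(r^{*}S)=\operatorname{supp}_{H^n}(S)$. This follows from the local description of the support together with injectivity of $r^{*}$ and surjectivity of $r$: for $x\in H^n$, the existence of $u$ with $u(x)\neq 0$ and $u\cdot r^{*}S=r^{*}(r(u)\cdot S)=0$ forces $r(u)\cdot S=0$ with $r(u)(x)\neq 0$, which is exactly $x\notin\operatorname{supp}_{H^n}S$, while points outside $H^n$ are excluded since $\operatorname{supp}(r^{*}S)\subseteq H^n$. Combined with the first step, $r^{*}$ thus restricts to a bijection between $\{S\in X_H:\operatorname{supp}S\subseteq E\}$ and $\{T\in X:\operatorname{supp}T\subseteq E\}$, the latter set being automatically contained in $I(H^n)^{\perp}$ because $E\subseteq H^n$ and $H^n$ is synthetic. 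I would also record that $r$ carries $I(E)$ onto $I_{A^n(H)}(E)$: surjectivity lets one lift any $v\in A^n(H)$ vanishing on $E$ to some $u\in A^n(G)$ with $u|_{H^n}=v$, and then $u$ already vanishes on $E\subseteq H^n$.

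With these in place, the proof reduces to transporting the characterisation that $E$ is a set of $X$-synthesis precisely when every $T\in X$ with $\operatorname{supp}T\subseteq E$ annihilates $I(E)$, i.e. $\langle u,T\rangle=0$ for all $u\in I(E)$. For the implication from $G$ to $H$, take $S\in X_H$ with $\operatorname{supp}S\subseteq E$ and $v\in A^n(H)$ vanishing on $E$; writing $v=r(u)$ with $u\in I(E)$ and using that $r^{*}S\in X$ has $\operatorname{supp}(r^{*}S)\subseteq E$, the adjoint relation gives $\langle v,S\rangle=\langle u,r^{*}S\rangle=0$, so $E$ is a set of $X_H$-synthesis. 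For the converse, take $T\in X$ with $\operatorname{supp}T\subseteq E$; then $T\in I(H^n)^{\perp}$, so $T=r^{*}S$ for a unique $S\in VN^n(H)$, necessarily lying in $X_H$ and supported in $E$, and for $u\in I(E)$ one gets $\langle u,T\rangle=\langle r(u),S\rangle=0$ by $X_H$-synthesis, since $r(u)$ vanishes on $E$. Hence $E$ is a set of $X$-synthesis.

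I expect the main obstacle to be the groundwork rather than the final transport: namely securing the clean form of the subgroup lemma in the multidimensional setting, that restriction $A^n(G)\to A^n(H)$ realises $A^n(H)$ as $A^n(G)/I(H^n)$ and that $H^n$ is a set of synthesis, so that $r^{*}$ embeds $VN^n(H)$ onto exactly $\{T:\operatorname{supp}T\subseteq H^n\}$ with the support identity above. This is where the extended and $\sigma$-Haagerup tensor product structure of $A^n(G)$ and $VN^n(G)$ must do real work; once the embedding and its support compatibility are in hand, the module bookkeeping and the two implications are routine.
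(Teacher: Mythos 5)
Your proposal is correct and follows essentially the same route as the paper: both arguments hinge on the subgroup lemma (so that $T\in X$ with $\operatorname{supp}T\subseteq E\subseteq H^n$ lies in $I_{A^n(G)}(H^n)^{\perp}=r_n^*(VN^n(H))$), pull $T$ back to $S\in X_H$ with $\operatorname{supp}S\subseteq E$, and transport the annihilation of $I(E)$ through the adjoint relation $\langle u,r_n^*S\rangle=\langle r_n(u),S\rangle$. You merely make explicit some details (the quotient identification $A^n(H)\cong A^n(G)/I_{A^n(G)}(H^n)$, support compatibility, and the ``trivial'' direction) that the paper delegates to Lemma~\ref{FunctProp} and the preceding remark.
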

	
	One of the main ingredients in the injection theorem is the subgroup lemma, which states that a closed subgroup of a locally compact group $G$ is a set of spectral synthesis for $A(G).$ This result is again due to Reiter in the abelian case, while the nonabelian version is due to Takesaki and Tatsuuma \cite{TaTa2}. Theorem \ref{SubGp_Lemma_Spectral} of this paper gives a multidimensional version of the subgroup lemma. We show that if $H$ is a closed subgroup of $G,$ then $H^n$ is a set of spectral synthesis for $A^n(G).$
	
	The analogue of the above injection theorem and subgroup lemma can be given for the Ditkin sets. Again, in the abelian case this is a classical result due to Reiter \cite{Rei}. The subgroup lemma for Ditkin sets in the nonabelian case is due to Derighetti \cite{Der2}. The injection theorem is due to Ludwig and Turowska \cite{LuTu} and independently by Derighetti \cite{Der3}. Theorem \ref{SubGp_Lemma_Ditkin} and Theorem \ref{Inj_Thm_Ditkin_1} provide the subgroup lemma and the injection theorem for local Ditkin sets, respectively, where the subgroups are normal. The precise statement is as follows. 
    \begin{thmd}
        Let $H$ be a closed normal subgroup of $G.$ 
        \begin{enumerate}
            \item Then $H^n$ is a locally Ditkin set for $A^n(H).$ 
            \item If $E$ is a closed subset of $H^n,$ then $E$ is a local Ditkin set for $A^n(G)$ if and only if $E$ is a local Ditkin set for $A^n(H).$
        \end{enumerate}
    \end{thmd}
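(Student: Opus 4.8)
The plan is to read part (1) as the subgroup lemma and part (2) as an injection theorem, and to use the normality of $H$ in exactly one place: to pass to the quotient group $G/H$. Write $q\colon G\to G/H$ for the canonical homomorphism and $q^{(n)}=q\times\cdots\times q\colon G^n\to (G/H)^n$ for the induced map, so that $H^n=(q^{(n)})^{-1}(\dot{e})$ is precisely the fibre over the identity point $\dot{e}=(eH,\dots,eH)$ of $(G/H)^n$. Part (1) will then be reduced to the assertion that a singleton is a local Ditkin set for $A^n(G/H)$, and I will establish that $H^n$ is a local Ditkin set for $A^n(G)$. Part (2) will be obtained by combining part (1) with the identification of the restriction of $A^n(G)$ to $H^n$ with $A^n(H)$, which is available once $H^n$ is known to be a set of spectral synthesis by Theorem \ref{SubGp_Lemma_Spectral}.

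For part (1), first I would show that the point $\dot{e}$ is a local Ditkin set for $A^n(G/H)$: for $n=1$ this is the classical fact that points are Ditkin sets for the Fourier algebra, and for general $n$ I would deduce it from the product structure of $A^n$ by factoring an approximating net through the one-dimensional case, using the regularity and Tauberian properties of $A^n(G/H)$. Normality then enters through the composition operator $w\mapsto w\circ q^{(n)}$, which sends $A^n(G/H)$ (more precisely the relevant multiplier algebra) into the multipliers of $A^n(G)$ and carries functions vanishing near $\dot{e}$ to functions vanishing near $H^n$, since $q^{(n)}$ is open. Given $u\in I(H^n)$, multiplying $u$ by the $q^{(n)}$-images of a local-Ditkin approximating net for $\dot{e}$ produces elements of $u\,J(H^n)$ converging locally to $u$, which is exactly the local Ditkin property of $H^n$ for $A^n(G)$.

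For part (2), the restriction map $r\colon A^n(G)\to A^n(H)$ is the main device. Since $H^n$ is a set of spectral synthesis for $A^n(G)$ by Theorem \ref{SubGp_Lemma_Spectral}, one identifies $A^n(G)/I(H^n)$ with $A^n(H)$, so that $r$ is a complete quotient map carrying $I_G(E)$ onto $I_H(E)$ and functions vanishing near $E$ in $G^n$ to functions vanishing near $E$ in $H^n$. The implication ``local Ditkin for $A^n(G)$ implies local Ditkin for $A^n(H)$'' then follows by applying $r$ to a local-Ditkin factorization in $A^n(G)$. For the converse, I would use that $E\subseteq H^n$ is local Ditkin for $A^n(H)=A^n(G)|_{H^n}$ together with part (1), namely that $H^n$ is itself a local Ditkin set for $A^n(G)$, and invoke the transitivity principle for local Ditkin sets along a synthetic, locally-Ditkin hull to promote the factorization inside $H^n$ to one in $A^n(G)$.

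The step I expect to be the main obstacle is transferring the \emph{factorization} (Ditkin) condition, as opposed to mere synthesis, across the extended Haagerup tensor product realization of $A^n$. Concretely, I must verify that composition with $q^{(n)}$ genuinely lands in the multiplier algebra of $A^n(G)$ and interacts correctly with the support theory of $VN^n(G)$, and that $r$ is a \emph{complete} quotient map whose kernel is exactly $I(H^n)$ with the expected behaviour on supports; both points are delicate because the module and support structure of $A^n$ is governed by operator-space tensor products rather than by a pointwise function algebra. Once these module-theoretic facts are secured, the approximation arguments in parts (1) and (2) should run as in the one-dimensional nonabelian theory.
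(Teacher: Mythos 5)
Your overall architecture --- reduce part (1) to a statement at the identity coset of $(G/H)^n$ and pull back along $q_n$, then obtain part (2) from restriction/extension --- is the same as the paper's, which follows Derighetti. But there is a genuine gap in the central transfer step of part (1). You claim that if $\{w_\alpha\}$ is a local-Ditkin approximating net for $\{\dot e\}$ in $A^n(G/H)$, then for every $u\in I_{A^n(G)}(H^n)$ the products $u\,(w_\alpha\circ q_n)$ converge (locally) to $u$. The Ditkin property of $\{\dot e\}$ only gives $\widetilde u\, w_\alpha\to\widetilde u$ for $\widetilde u\in I_{A^n(G/H)}(\dot e)$, i.e.\ for functions living on $(G/H)^n$; a general $u\in I_{A^n(G)}(H^n)$ does not factor through $q_n$, so nothing in the Ditkin property of the singleton controls $\|u-u(w_\alpha\circ q_n)\|_{A^n(G)}$. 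What is actually needed is the quantitative estimate of Proposition \ref{Derighetti_Prop}: for suitable $v\in A^n(G/H)$ with $v\circ q_n\equiv 1$ on a neighbourhood of $H^n$ and $\|v\|_{A^n(G/H)}\le 1+\eta$, one has $\|u(v\circ q_n)\|_{A^n(G)}<\epsilon+\|u|_{H^n}\|_{A^n(H)}$, which for $u\in I_{A^n(G)}(H^n)$ makes $u(v\circ q_n)$ small in norm while $1-v\circ q_n$ vanishes near $H^n$. Proving that estimate is the real work: it uses that $H^n$ is a set of spectral synthesis for $A^n(G)$ (Corollary \ref{SubGp_Lemma_Spectral}) to replace $u$ by a compactly supported function with support disjoint from $H^n$, the isometric extension property of Lemma \ref{FunctProp}(iii) to split off the part of $u$ carried by $H^n$, and only \emph{local units} at $\dot e$ of norm $\le 1+\eta$ --- not the full local Ditkin property of the singleton in $A^n(G/H)$, which for $n>1$ you would otherwise also have to establish and which your ``factoring through the one-dimensional case'' sketch does not actually provide.

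Once that estimate is in place, part (2) proceeds much as you describe: the forward implication by restriction and extension (this matches Lemma \ref{DSInjThmFP}), and the converse by combining the synthesis of $H^n$, the identification $A^n(H)\cong A^n(G)/I_{A^n(G)}(H^n)$, and the estimate above, as in Th\'eor\`eme 12 of Derighetti. So the proposal is repairable and correctly locates where normality enters, but as written the key lemma that makes the quotient construction work is missing, and the step it is meant to justify is asserted rather than proved.
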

    
    Further, in Theorem \ref{Inj_Thm_Ditkin_2}, we prove the injection theorem without the assumption of normality on subgroups but here the group is amenable. 
	
	Yet another classical result of Reiter \cite{Rei}, the inverse projection theorem for sets of synthesis, states that if $H$ is a closed subgroup of the abelian group $G,$ then $\widetilde{E}\subseteq G/H$ is a set of synthesis for $A(G/H)$ if and only if its pullback is a set of synthesis for $A(G).$ Derighetti \cite{Der1} and Lohou\'{e} \cite{Loh} independently proved the inverse projection for the nonabelian case. For other extensions see \cite{PaNSK}. A partial, nonabelian version for local Ditkin sets is due to Derighetti \cite{Der1} and the reader is referred to \cite{PaNSK2} for a complete version. Theorem \ref{IPT} of the present paper contains an inverse projection theorem for spectral synthesis and strong Ditkin sets in the case of compact groups. The exact statement is here.
    \begin{thmd}
        Let $H$ be a closed normal subgroup of a compact group $G$ and let $\widetilde{E}$ be a closed subset of $(G/H)^n.$ Then 
        \begin{enumerate}
            \item $\widetilde{E}$ is a set of synthesis for $A^n(G/H)$ if and only if $q_n^{-1}(\widetilde{E})$ is a set of spectral synthesis for $A^n(G).$
            \item $\widetilde{E}$ is a strong Ditkin set for $A^n(G/H)$ if and only if $q_n^{-1}(\widetilde{E})$ is a strong Ditkin set for $A^n(G).$
        \end{enumerate}
    \end{thmd}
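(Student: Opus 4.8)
The plan is to prove the inverse projection theorem for compact groups by reducing the two-sided statement about subsets of $(G/H)^n$ and $G^n$ to a transfer of the synthesis and strong Ditkin properties along the quotient map $q\colon G\to G/H$ and its $n$-fold amplification $q_n\colon G^n\to (G/H)^n$. The first thing I would do is set up the dual-module machinery: when $G$ is compact and $H$ is normal, the pullback homomorphism $q_n^*\colon A^n(G/H)\to A^n(G)$ should embed $A^n(G/H)$ as a closed subalgebra of $A^n(G)$ realized concretely as the $H^n$-invariant (or more precisely the $q_n$-constant) functions, and dually $VN^n(G/H)$ should sit inside $VN^n(G)$ as a complemented weak-$*$-closed subspace. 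The compactness of $G$ is what makes this clean, since then $H$ is also compact and one has genuine averaging projections: integrating over $H$ with respect to normalized Haar measure gives a norm-one projection $P\colon A^n(G)\to q_n^*(A^n(G/H))$, and the amplified version averages over $H^n$. I would write down this averaging projection explicitly and verify it is a module map over $A^n(G/H)$.

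For part (1), the synthesis statement, I would exploit that $q_n^{-1}(\widetilde E)$ is exactly an $H^n$-saturated set and that synthesis is detected by the ideal structure. The forward direction would use the averaging projection to push the condition ``$u\in I(q_n^{-1}(\widetilde E))$ with $u$ vanishing appropriately is in the closure of functions with compact support away from the set'' down to $A^n(G/H)$: given that $q_n^{-1}(\widetilde E)$ is a set of synthesis, any $u\in A^n(G/H)$ lying in the closed ideal of $\widetilde E$ pulls back to an element of the closed ideal $I(q_n^{-1}(\widetilde E))$, and synthesis upstairs together with the projection back downstairs yields synthesis for $\widetilde E$. For the reverse direction I would combine the hypothesis that $\widetilde E$ is synthetic with the already-established subgroup lemma (Theorem~\ref{SubGp_Lemma_Spectral}), which tells us $H^n$ is a set of spectral synthesis for $A^n(G)$, and a parallel-synthesis / localization argument: since $q_n^{-1}(\widetilde E)$ fibers over $\widetilde E$ with fibers that are cosets of $H^n$ (themselves synthetic as translates of the synthetic subgroup $H^n$), one assembles synthesis of the total set from synthesis of the base and of the fibers. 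This is essentially where the earlier injection and parallel-synthesis results are the workhorses.

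For part (2), the strong Ditkin statement, the strategy is analogous but now I must produce the approximating sequences explicitly rather than merely argue about ideal closures. Recall that $\widetilde E$ being strong Ditkin means there is a sequence (or net) $(v_k)$ in the ideal $J(\widetilde E)$ of functions vanishing on a neighborhood of $\widetilde E$ such that $v_k u\to u$ for every $u$ in the ideal $I(\widetilde E)$. I would pull these $v_k$ back via $q_n^*$ to get $v_k\circ q_n\in A^n(G)$, check that they vanish near $q_n^{-1}(\widetilde E)$, and verify the approximation $(v_k\circ q_n)\,u\to u$ for $u\in I(q_n^{-1}(\widetilde E))$ by first applying the averaging projection $P$ to reduce a general $u$ to its $H^n$-invariant part and controlling the error using that $H^n$ is itself (locally) Ditkin via Theorem~\ref{SubGp_Lemma_Ditkin}. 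The converse runs by restricting a strong-Ditkin approximating sequence for $q_n^{-1}(\widetilde E)$ and pushing it down through $P$, using that $P$ is a contractive module homomorphism so that the approximation property survives the projection.

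The hard part, I expect, will be the reverse direction of part (1): establishing that synthesis downstairs propagates to the saturated set upstairs. One must carefully combine the fibered structure of $q_n^{-1}(\widetilde E)$ over $\widetilde E$ with the synthesis of the coset-fibers, and the natural tool is a parallel-synthesis statement for $A^n$ coupled with the subgroup lemma; making the localization rigorous in the operator-space / $VN^n(G)$ setting—where the tensor-product structure of the predual means one cannot simply multiply functions pointwise—is the delicate technical point. I would isolate this as a lemma and prove it by the averaging-projection decomposition, reducing the verification of membership in the closed ideal to separately handling the $H^n$-invariant component (controlled by synthesis of $\widetilde E$) and the complementary component (controlled by synthesis of the subgroup $H^n$), and only then invoking the compactness of $G$ to guarantee the averaging projection is available and bounded.
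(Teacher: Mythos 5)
Your setup and the easy halves are fine: the pullback $\widetilde{u}\mapsto\widetilde{u}\circ q_n$ together with the averaging projection over $H^n$ is exactly how the paper disposes of the implications ``$q_n^{-1}(\widetilde{E})$ synthetic (resp.\ strong Ditkin) upstairs $\Rightarrow$ $\widetilde{E}$ synthetic (resp.\ strong Ditkin) downstairs'' in Lemma~\ref{IPTConv}, and your treatment of those directions matches the paper's. The genuine gap is in the hard directions, where you propose to decompose $u\in I_{A^n(G)}(q_n^{-1}(\widetilde{E}))$ as $Pu+(u-Pu)$, handle the $H^n$-invariant part $Pu$ by synthesis of $\widetilde{E}$, and control the ``complementary component'' $u-Pu$ by the subgroup lemma (Theorem~\ref{SubGp_Lemma_Spectral}) or by local Ditkin-ness of $H^n$. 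This step fails: $u-Pu$ is a perfectly general element of $I_{A^n(G)}(q_n^{-1}(\widetilde{E}))$ with no relation whatsoever to the \emph{set} $H^n\subseteq G^n$ --- it need not vanish on $H^n$ or on any coset of it --- so synthesis or Ditkin-ness of $H^n$ gives no purchase on it, and the two-term decomposition leaves you facing the original problem for $u-Pu$. The companion heuristic you invoke, that a set fibering over a synthetic base with synthetic coset-fibers is itself synthetic, is not a theorem (unions and products of synthesis sets fail to be synthetic in general), and the paper's parallel-synthesis result relates $A^n(G)$ to $A^{n+1}(G)$, not $A^n(G)$ to $A^n(G/H)$, so it is not the workhorse here.

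What the paper actually does (following Spronk--Turowska \cite{SprT}) is replace your single averaging projection by the full Peter--Weyl decomposition: for each irreducible $\pi=\otimes_1^n\pi^i$ of $G^n$ one averages $v$ over $H^n$ against $\pi(h)$ to form a matrix-valued $v^\pi$, then \emph{twists} by $\otimes\pi^i(x_i)$ to obtain $\widetilde{v}^\pi$, which \emph{is} invariant under right translation by $H^n$ and hence descends to $A^n(G/H)$, where synthesis (resp.\ the strong Ditkin sequence) for $\widetilde{E}$ applies entrywise; untwisting by the matrix coefficients of $\check{\pi}$ puts each $v^\pi_{k,l}$ back into $J_{A^n(G)}(q_n^{-1}(\widetilde{E}))$, and a bounded approximate identity $\{e_\alpha\}$ in $L^1(G^n)$, chosen so that $e_\alpha\cdot v$ is a finite linear combination of the $v^\pi_{k,l}$, reassembles $v$ (with a three-term estimate using the uniform bound in the strong Ditkin case). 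The twisting-by-matrix-coefficients step, which converts the non-invariant isotypic pieces into genuinely $H^n$-invariant functions, is the idea missing from your proposal; without it, averaging alone only sees $Pu$ and cannot reach all of $I_{A^n(G)}(q_n^{-1}(\widetilde{E}))$.
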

	
    In Section 5, we prove a result of the parallel synthesis between $A^n(G)$ and $A^{n+1}(G).$ This result is for both sets of spectral synthesis and strong Ditkin sets in compact groups. The precise statements are as follows.
    \begin{thmd}
		Assume that $G$ is compact. Let $E \subseteq G^n$ be closed and let $$E^* = \{(x_1,x_2,...x_{n+1}) :(x_1x_2^{-1}, x_2x_3^{-1},..., x_nx_{n+1}^{-1}) \in E\}.$$
		(i) The set $E$ is a set of spectral synthesis for $A^n(G)$ if and only if $E^*$ is a set of spectral synthesis for $A^{n+1}(G)$.\\
		(ii) The set $E$ is a strong Ditkin set for $A^n(G)$ if and only if $E^*$ is a strong Ditkin set for $A^{n+1}(G)$.
	\end{thmd}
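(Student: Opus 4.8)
The plan is to realize the passage from dimension $n$ to $n+1$ through the difference map together with a compactness averaging, so that the two synthesis questions become one and the same question on an invariant subalgebra. Write $\phi\colon G^{n+1}\to G^{n}$ for $\phi(x_1,\dots,x_{n+1})=(x_1x_2^{-1},\dots,x_nx_{n+1}^{-1})$, so that $E^{*}=\phi^{-1}(E)$. The map $\phi$ is constant on the orbits of the diagonal right action $x\cdot g=(x_1g,\dots,x_{n+1}g)$ of $G$ on $G^{n+1}$, and two points share an image exactly when they lie in one orbit; thus $E^{*}$ is saturated for this action and $G^{n}\cong G^{n+1}/\Delta$ as homogeneous spaces, where $\Delta$ is the diagonal copy of $G$. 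First I would record the structural fact that $N_n u:=u\circ\phi$ defines an isometric (indeed completely isometric) algebra embedding of $A^{n}(G)$ into $A^{n+1}(G)$ whose range is exactly the subalgebra $A^{n+1}(G)^{\Delta}$ of $\Delta$-invariant elements; for $n=1$ this is the canonical embedding $u\mapsto u(x_1x_2^{-1})$ of $A(G)$ into the two-variable algebra, and the general case follows by iterating this together with the Haagerup-tensor description of $A^{n}(G)$ from \cite{ToTu}. Because $G$ is compact, every element of $A^{n}(G)$ is a norm limit of Peter--Weyl data, which makes this embedding and the estimates below completely explicit.

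Next I would use compactness to produce the averaging projection. Since diagonal right translation acts isometrically on $A^{n+1}(G)$, the operator $Pf(x)=\int_G f(x\cdot g)\,dg$ is a well-defined contractive projection of $A^{n+1}(G)$ onto $A^{n+1}(G)^{\Delta}=N_n(A^{n}(G))$. The key properties I would verify are that $P$ is a module map over the invariant subalgebra, i.e. $P(hf)=h\,Pf$ for $h\in N_n(A^{n}(G))$, and that $P$ carries the two canonical ideals of $E^{*}$ onto those of $E$. Writing $I(\cdot)$ for the ideal of functions vanishing on a set and $J(\cdot)$ for the closure of the functions vanishing on a neighbourhood of it, saturation of $E^{*}$ gives $P\big(I(E^{*})\big)=N_n\big(I(E)\big)$ and, using that an invariant neighbourhood of $E^{*}$ is the $\phi$-preimage of a neighbourhood of $E$, also $P\big(J(E^{*})\big)=N_n\big(J(E)\big)$.

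With these tools one direction is almost formal. If $E^{*}$ is a set of synthesis for $A^{n+1}(G)$, then $I(E^{*})=J(E^{*})$; applying $P$ and using the previous paragraph yields $N_n(I(E))=N_n(J(E))$, and since $N_n$ is isometric this forces $I(E)=J(E)$, so $E$ is a set of synthesis for $A^{n}(G)$. For strong Ditkin sets the same computation transports a bounded approximating net: $P$ is contractive and $N_n$ isometric, so a bounded net witnessing strong Ditkin for $E^{*}$ pushes down to one for $E$.

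The substantive direction is the converse, and this is where I expect the main obstacle. Assume $E$ is a set of synthesis for $A^{n}(G)$ and take $f\in I(E^{*})$. The decomposition $f=Pf+(f-Pf)$ handles the invariant part immediately, since $Pf\in N_n(I(E))=N_n(J(E))\subseteq J(E^{*})$. Everything therefore reduces to the transverse statement $\ker P\cap I(E^{*})\subseteq J(E^{*})$: one must synthesise those elements of $I(E^{*})$ whose average along every diagonal orbit vanishes. This is precisely a statement about synthesis in the orbit directions, and I would extract it from the fact that the diagonal subgroup is itself a set of spectral synthesis (Theorem \ref{SubGp_Lemma_Spectral}), decomposing $\ker P$ into the nontrivial isotypic components of the compact diagonal action via Peter--Weyl and synthesising each component near the saturated set $E^{*}$. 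Controlling this transverse approximation, so that no obstruction survives outside the invariant part, is the technical heart of the argument. For part (ii) the same scheme applies, with the transverse input upgraded from Theorem \ref{SubGp_Lemma_Spectral} to the strong-Ditkin subgroup lemma (Theorem \ref{SubGp_Lemma_Ditkin}); the only extra care is to keep all approximating nets uniformly bounded, which is guaranteed since $N_n$ is isometric, $P$ is contractive, and the transverse nets arise from a strong Ditkin structure on the diagonal.
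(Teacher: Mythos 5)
Your setup coincides with the paper's: the isometry $N\colon A^{n}(G)\to A^{n+1}_{inv}(G)$, $Nu=u\circ\phi$, the contractive averaging projection $P$ onto the invariant subalgebra, and the correspondence of ideals $u\in I_{A^n(G)}(E)\Leftrightarrow Nu\in I_{A^{n+1}(G)}(E^\ast)$ (and likewise for $J$) are exactly Lemma \ref{RelIdeals1} and the surrounding propositions, and your ``almost formal'' direction is the paper's easy implication. The gap is in the substantive direction. Your decomposition $f=Pf+(f-Pf)$ only transports the hypothesis on $E$ to the $\Delta$-invariant (trivial isotypic) part of $f$; the entire content of the theorem is then concentrated in your ``transverse statement'' $\ker P\cap I(E^{\ast})\subseteq J(E^{\ast})$, and the route you propose to it does not work. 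The subgroup lemma (Theorem \ref{SubGp_Lemma_Spectral}) asserts that the ideal of functions vanishing on a subgroup of the form $H^{n+1}$ is synthesizable; the diagonal $\Delta=\{(g,\dots,g)\}$ is not of that form, and in any case synthesis of $\Delta$ concerns $I_{A^{n+1}(G)}(\Delta)$, which has no bearing on elements of $I_{A^{n+1}(G)}(E^{\ast})$ whose orbit averages vanish. So the step you yourself flag as ``the technical heart'' is not merely technical: as stated it is a non sequitur, and nothing in your outline closes it.

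The idea you are missing is the twisting device of Spronk--Turowska, which the paper uses. For $\omega\in I_{A^{n+1}(G)}(E^{\ast})$ and each irreducible $\pi$, one forms the isotypic component $\omega^{\pi}(x_1,\dots,x_{n+1})=\int_G\omega(x_1x,\dots,x_{n+1}x)\pi(x)\,dx$ and then \emph{twists} it to $\widetilde{\omega}^{\pi}=\pi(x_1)\omega^{\pi}$. The point is that $\widetilde{\omega}^{\pi}$ \emph{is} diagonally invariant for every $\pi$ (not only the trivial one), and its matrix entries still lie in $I_{A^{n+1}(G)}(E^{\ast})$ because the twist is multiplication by matrix coefficients $u^{\pi}_{ik}\otimes 1\otimes\cdots\otimes 1$, which preserves the ideal. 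Hence $N^{-1}$ applies to every isotypic component, the synthesis hypothesis on $E$ puts each $N^{-1}(\widetilde{\omega}^{\pi}_{ij})$ into $J_{A^n(G)}(E)$, untwisting by $\check{u}^{\pi}_{ik}\otimes 1\otimes\cdots\otimes 1$ returns $\omega^{\pi}_{ij}$ to $J_{A^{n+1}(G)}(E^{\ast})$, and an $L^1(G)$ approximate identity reassembles $\omega$ from its isotypic components. In other words, instead of splitting off $\ker P$ and treating it by a separate (unavailable) transverse synthesis, every Peter--Weyl component is individually conjugated into the invariant subalgebra where the hypothesis on $E$ can be used. The same remark applies to your treatment of part (ii).
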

	A similar result can be found in \cite{AlToTu}.
	
	One of the celebrated results of Abelian harmonic analysis is Malliavin's theorem on the failure of spectral synthesis. In fact, for a locally compact abelian group $G,$ Malliavin showed that every closed subset of $G$ is a set of spectral synthesis if and only if $G$ is discrete \cite{Ma1, Ma2, Ma3} (see also \cite{Ru}). The nonabelian version of this result is due to Kaniuth and Lau \cite{KL}. For other generalizations see \cite{Kan1, PaPr2}. In the last part of the present paper, we prove the multidimensional analogue of Malliavin's theorem.

\section{Notations and Preliminaries}

    \subsection{Tensor product of operator spaces}
        In this section, we go through some fundamentals of operator space. As tensor products will be crucial, our main goal in creating this section is to compile the necessary background information on these subjects.

        Let $X$ be a linear space and $M_n(X)$ denotes the space of all $n\times n$ matrices with entries from the space $X.$ An {\it operator space} is a complex vector space $X$ together with an assignment of a norm $\|\cdot\|_n$ on the matrix space $M_n(X),$ for each $n\in\mathbb{N},$ such that
        \begin{enumerate}[(i)]
            \item $\|x\oplus y\|_{m+n}=max\{\|x\|_m,\|y\|_n\}$ and
            \item $\|\alpha x\beta\|_n\leq\|\alpha\|\|x\|_m\|\beta\|$
        \end{enumerate}
        for all $x\in M_m(X),$ $y\in M_n(X),$ $\alpha\in M_{n,m}$ and $\beta\in M_{m,n}.$ Let $\varphi:X\rightarrow Y$ be a linear transformation from operator space $X$ to operator space $Y$ and $\varphi_n$ denotes the {\it$n^{\mbox{th}}$-amplification} of $\varphi$ which is a linear transformation from $M_n(X)$ to $M_n(Y)$ given by $\varphi_n([x_{ij}]) :=[\varphi(x_{ij})].$ This linear transformation $\varphi$  is said to be {\it completely bounded} if $\sup\{\|\varphi_n\|:n\in\mathbb{N}\}<\infty.$ Let $\mathcal{CB}(X,Y)$ denotes the space of all completely bounded linear mappings from $X$ to $Y$ equipped with the norm, denoted $\|\cdot\|_{cb},$ $$\|\varphi\|_{cb}:=\sup\{\|\varphi_n \|:n\in\mathbb{N}\},\ \varphi\in\mathcal{CB}(X,Y).$$ The map $\varphi$ is a said to be a {\it complete isometry (complete contraction)} if $\varphi_n$ is an isometry (a contraction) for each $n\in\mathbb{N}.$

        Let $X$ and $Y$ be operator spaces,  the {\it Haagerup tensor norm} of $u\in M_n(X\otimes Y)$ is given by $$\|u\|_h=\inf\left\{ \|x\|\|y\|:u=x\odot y,x\in M_{n,r}(X), y\in M_{r,n}(Y),r\in\mathbb{N} \right\}.$$ Here $x \odot y$ denotes the inner matrix product of $x$ and $y$ defined as $(x \odot y)_{i,j} = \underset{k=1}{\overset{r}{\sum}} x_{i,k} \otimes y_{k,j},$ where $x \odot y \in M_{n}(X \otimes Y).$ See \cite[Chapter 9]{ER3}. Now, the space $X\otimes^h Y$ denotes the resulting operator space corresponding to the Haagerup tensor norm. In the case when $X$ and $Y$ are C*-algebras, for $n=1,$ the Haagerup norm can be written as follows. For $u\in X\otimes^h Y,$ $$\|u\|_h=\inf \left\{ \left\| \underset{n\in\mathbb{N}}{\sum} x_n x_n^\ast \right\|_X^{1/2} \left\| \underset{n\in\mathbb{N}}{\sum} y_n^\ast y_n \right\|_Y^{1/2} :u= \underset{n\in\mathbb{N}} {\sum}x_n\otimes y_n \right\}.$$ 
            
        The {\it extended Haagerup tensor product} of $X$ and $Y,$ denoted $X\otimes^{eh} Y$ is defined as the space of all normal multiplicatively bounded functionals on $X^\ast\times Y^\ast.$ By \cite{ER2}, $(X\otimes^h Y)^\ast$ and $X^\ast\otimes^{eh}Y^\ast$ are completely isometric. Given dual operator spaces $X^\ast$ and $Y^\ast,$ the {\it $\sigma$-Haagerup tensor product (or normal Haagerup tensor product)} is defined by $$X^\ast\otimes^{\sigma h}Y^\ast=(X\otimes^{eh}Y)^\ast.$$ Further, the following inclusions hold completely isometrically: $$X^\ast\otimes^h Y^\ast \hookrightarrow X^\ast\otimes^{eh} Y^\ast \hookrightarrow X^\ast\otimes^{\sigma h} Y^\ast.$$
        \begin{nota}
            The n-tensor product of a space X is denoted by $$\otimes_{n}X=\underbrace{ X \otimes X \otimes\cdots\otimes X}_n.$$ Now, we define $\otimes_{n}^h X,$ $\otimes_{n}^{eh} X,$ and $\otimes_{n}^{\sigma h} X$ as the completion of $\otimes_{n}X$ with respect to their corresponding norms.
        \end{nota}
        We refer to \cite{ER1, ER2} to know more about these tensor products. For further details on operator spaces, the reader can refer to \cite{ER3}.

    \subsection{Multidimensional Fourier algebra}
        The multidimensional Fourier algebra, introduced by Todorov and Turowska, is defined in this section after the introduction of the fundamental Fourier algebra. Let $G$ be a locally compact group and fix a left Haar measure $dx$ on $G$. Let $\lambda_G$ denotes the left regular representation of $G$ acting on Hilbert space $L^2(G)$ by left translates: $\lambda_G(s)f(t) = f(s^{-1}t)$ for some $s,t \in G$ and $f \in L^2(G).$ The group von Neumann algebra $VN(G)$ is smallest self adjoint subalgebra in $\mathcal{B}(L^2(G)$ that contains all $\lambda_G(s)$ for all $s \in G$ and is closed in weak operator topology. The Fourier algebra $A(G)$ is actually the predual of $VN(G)$ and every function $u$ in $A(G)$ is represented by $u(x) = \langle \lambda_G(x)f,g \rangle$ with $\|u\|_{A(G)} = \|f\|_2\|g\|_2.$ The duality between $A(G)$ and $VN(G)$ is given by $\langle T,u \rangle = \langle Tf,g \rangle.$ For more details refer Eymard \cite{Eym1}. We shall denote by $G^n$ the group $G\times G\times\cdots\times G\ (n\mbox{-times}).$

        The multidimensional version of Fourier algebra denoted by $A^n(G)$, is actually defined as collection of all functions $f\in L^{\infty}(G^n)$ such that a normal completely bounded multilinear functions $\Phi$ on $\underbrace{VN(G) \times VN(G) \times \cdots \times VN(G)}_n$ satisfying $f(x_1,x_2,...,x_n)= \Phi(\lambda(x_1),...,\lambda(x_n))$ From \cite{ToTu}, $A^n(G)$ coincides with $\otimes_n^{eh} A(G)$. Also, the dual of $A^n(G)$ is completely isometrically isomorphic to $VN^n(G):= \otimes_n^{\sigma h}VN(G)$. See \cite{ToTu} for more details.
	
\section{Functorial properties and Support}
    In this section, we prove some functorial properties of $A^n(G).$ We also study the properties of the support of an element of $VN^n(G).$
	
    \subsection{Functorial Properties} 
        We shall begin this section by proving certain functorial properties of $A^n(G)$ in the spirit of \cite{Wo}. Most of these properties are direct consequences of \cite{ER2, Her, Wo}.
        \begin{nota}
            Given a closed subgroup $H$ of a locally compact group $G,$ Let $r_n:A^n(G)\rightarrow A^n(H)$ denote the restriction map, i.e., $$r_n(u_1\otimes u_2\otimes\cdots\otimes u_n)=(u_1\otimes u_2\otimes\cdots\otimes u_n)|_{H^n}=u_1|_H\otimes u_2|_H\otimes\cdots \otimes u_n|_H.$$
        \end{nota}
	  \begin{lem}\label{FunctProp}
            Let $H$ be a closed subgroup of a locally compact group $G.$ For $u\in A^n(H),$ let $u^\circ$ denote the function on $G^n$ that is $u$ on $H^n$ and vanishes outside $H^n.$
	    \begin{enumerate}[(i)]
	           \item The restriction mapping $r_n:A^n(G)\rightarrow A^n(H)$ is a complete contraction.
		      \item If $H$ is open, then $A^n(H)$ is completely isometrically isomorphic to                 $(A^n(G))^\circ$ $:=\{u^\circ:u\in A^n(H)\}.$
		      \item Given $u\in A^n(H),$ there exists $v\in A^n(G)$ such that $\|u\|_{A^n(H)}=\|v\|_{A^n(G)}$ and $v|_{H^n}=u.$
		      \item The spaces $A^n(H)$ and $A^n(G)/I_{A^n(G)}(H^n)$ are completely isometric to each other.
		      \item If $H$ is normal, then the inclusion of $A^n(G/H)$ inside $A^n(G)$ is a complete isometry.
	           \item The inclusion of $A(G^n)$ inside $A^n(G)$ is a complete contraction with dense range.
		  \end{enumerate}
	   \end{lem}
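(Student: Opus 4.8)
The plan is to deduce all six statements from their one‑dimensional counterparts for $A(G)=A^1(G)$, which are classical, by exploiting the functorial behaviour of the extended Haagerup tensor product. Recall that $A^n(G)=\otimes_n^{eh}A(G)$ and that completely bounded maps $\varphi_i:X_i\to Y_i$ $(i=1,\dots,n)$ induce a completely bounded map $\varphi_1\otimes\cdots\otimes\varphi_n:\otimes_n^{eh}X_i\to\otimes_n^{eh}Y_i$ with $\|\varphi_1\otimes\cdots\otimes\varphi_n\|_{cb}\le\prod_i\|\varphi_i\|_{cb}$, and that, by \cite{ER2}, this assignment carries complete isometries to complete isometries and complete quotient maps to complete quotient maps. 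The one‑dimensional inputs I would use are: (a) for a closed subgroup $H\le G$ the restriction $r:A(G)\to A(H)$ is a complete quotient map with $\ker r=I_{A(G)}(H)$, and every $u\in A(H)$ admits an extension to $A(G)$ of the same norm (Herz's restriction theorem in its operator‑space form, \cite{Her}); (b) when $H$ is open, extension by zero identifies $A(H)$ completely isometrically with the closed ideal $(A(G))^{\circ}$; and (c) when $H$ is normal, $u\mapsto u\circ q$, where $q:G\to G/H$ is the quotient map, identifies $A(G/H)$ completely isometrically with $\{v\in A(G): v\ \text{is constant on cosets of}\ H\}\subseteq A(G)$.

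For (i), since $r:A(G)\to A(H)$ is a complete contraction, $r_n=r\otimes\cdots\otimes r$ is a complete contraction; on an elementary tensor $u_1\otimes\cdots\otimes u_n$ it returns $u_1|_H\otimes\cdots\otimes u_n|_H$, i.e. the pointwise restriction to $H^n$, so by density of the elementary tensors and continuity $r_n$ is genuine restriction of functions. For (iv), $r_n$ is moreover a complete quotient map (each $r$ is, and $\otimes^{eh}$ is projective), and, $r_n$ being pointwise restriction, $\ker r_n$ is exactly $I_{A^n(G)}(H^n)$; hence $A^n(H)\cong A^n(G)/I_{A^n(G)}(H^n)$ completely isometrically. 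Parts (ii) and (v) are injectivity statements: applying $\otimes_n^{eh}(\cdot)$ to the complete isometry $A(H)\hookrightarrow(A(G))^{\circ}$ (resp. $A(G/H)\hookrightarrow A(G)$) yields a complete isometry $A^n(H)\hookrightarrow A^n(G)$ (resp. $A^n(G/H)\hookrightarrow A^n(G)$); on elementary tensors these maps are extension by zero off $H^n$ and pull‑back along $q\times\cdots\times q:G^n\to(G/H)^n$ respectively, and one identifies the closed range accordingly — for (ii) note $(u_1\otimes\cdots\otimes u_n)^{\circ}=u_1^{\circ}\otimes\cdots\otimes u_n^{\circ}$, since a coordinate lying outside $H$ already annihilates the product, so the range is precisely $(A^n(G))^{\circ}$. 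For (vi), the operator projective tensor norm dominates the extended Haagerup norm on the algebraic tensor product, so the identity on $\otimes_nA(G)$ extends to a complete contraction $A(G^n)=\widehat\otimes_n A(G)\to\otimes_n^{eh}A(G)=A^n(G)$ whose range contains every elementary tensor and is therefore dense.

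For (iii), note that (iv) by itself only gives an approximate extension, as the quotient norm is an infimum. To obtain an extension of \emph{exactly} the same norm I would return to the sharp, operator‑space form of Herz's theorem — that the restriction $A(G)\to A(H)$ is a complete quotient map for which the (matricial) norm is attained — and combine it with the fact that every element of an extended Haagerup tensor product has a representation $u=\sum_i u_i^{(1)}\otimes\cdots\otimes u_i^{(n)}$ realising its norm: lifting each coordinate row/column through the norm‑attaining restriction and tensoring the lifts produces a $v\in A^n(G)$ with $\|v\|\le\|u\|$ and $r_n(v)=u$, whence $\|v\|=\|u\|$. This attainment step is where I expect the real work to lie: bare functoriality of $\otimes^{eh}$ delivers the quotient and isometry statements at once, but verifying that the coordinate liftings can be chosen to preserve the relevant row/column norms exactly (rather than up to $\varepsilon$) is the delicate point, and the only place where more than formal tensor‑product bookkeeping is needed. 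The remaining care throughout is routine — checking that each tensorial map agrees on elementary tensors with the expected pointwise operation, so that kernels and ranges may be read off as honest vanishing ideals and pull‑backs.
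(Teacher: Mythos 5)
Your overall strategy---deduce each statement from its classical $n=1$ counterpart and transport it through the functorial properties of the extended Haagerup tensor product---is precisely the paper's strategy for (i), (ii), (v) and (vi), where the paper cites Wood and Herz for the one-dimensional inputs and Effros--Ruan for functoriality, injectivity, and the comparison of the operator projective and extended Haagerup norms. Your treatment of (iii) is in fact more explicit than the paper's bare citation of Herz together with injectivity of $\otimes^{eh}$, and you correctly locate the delicate point (exact, rather than approximate, matricial norm attainment when lifting a norm-attaining $eh$-representation coordinatewise).

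The genuine divergence, and the one step I would push back on, is (iv). You obtain the complete quotient property of $r_n$ from ``projectivity of $\otimes^{eh}$,'' but this is not a property you can safely assume: the Haagerup tensor product is projective and the \emph{extended} Haagerup tensor product is injective, while a projectivity statement for $\otimes^{eh}$ is not among the standard Effros--Ruan facts and is not what the paper uses. The paper instead argues dually: it identifies $(I_{A^n(G)}(H^n))^{\perp}$ with $VN^n_H(G)$ and invokes the complete isometry $VN^n_H(G)\cong VN^n(H)$ (Kaniuth--Lau's Lemma 3.1 passed through $\otimes^{\sigma h}$), so that $A^n(G)/I_{A^n(G)}(H^n)$ and $A^n(H)$ are completely isometric as preduals. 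If you want to keep your formulation, replace ``projectivity'' by this duality: the adjoint of $r_n$ is a weak*-continuous complete isometry of $\otimes^{\sigma h}_n VN(H)$ onto the weak*-closed subspace $VN^n_H(G)$, and a weak*-continuous complete isometry onto a weak*-closed subspace dualizes to a complete quotient map. A smaller caveat concerns (vi): density of the range does not follow formally from ``the range contains all elementary tensors,'' since for a general extended Haagerup tensor product the algebraic tensor product is only weak*-dense (its norm closure is $\otimes^h_n A(G)$); the density of $A(G^n)$ in $A^n(G)$ is a specific fact from Todorov--Turowska rather than tensor-product bookkeeping.
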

	   \begin{proof}
	       $(i)$ By \cite[Proposition 4.3]{Wo} the restriction map on the Fourier algebra $A(G)$ is a complete contraction. Hence by \cite[Pg. 144]{ER2}, the map $r_n$ is a complete contraction.
		
	       $(ii)$ This follows from \cite[Proposition 4.2]{Wo} and \cite[Lemma 5.4]{ER2}.
		
	       $(iii)$ This is a consequence of \cite[Theorem 1b]{Her} and \cite[Lemma 5.4]{ER2}.
		
	       $(iv)$ Let $Q^n:A^n(G)\rightarrow A^n(G)/I_{A^n(G)}(H^n)$ denote the natural quotient map. Now consider the map $\Gamma:A^n(G)/I_{A^n(G)}(H^n)\rightarrow A^n(H)$ given by $\Gamma(Q^n(u))=u|_{H^n}.$ It is clear that $\Gamma$ is well-defined. Further, it can also be seen that $\Gamma$ is norm decreasing. Now, given any $v \in A^n(H)$, by $(iii)$ there exists $u \in A^n(G)$ such that $u|_{H^n} = v$ and $||u||_{A^n(G)} = ||v||_{A^n(G)}$. Thus, $\Gamma$ is the required isometric isomorphism.
		
	       We now claim that $\Gamma$ is a complete isometry. Observe that $(I_{A^n(G)} (H^n))^{\perp} = VN_H(G).$ Further, by \cite[Lemma 3.1]{KL}, $VN_H^n(G)$ and $VN^n(H)$ are completely isometric to each other. Thus $A^n(G)/I_{A^n(G)} (H^n)$ and $A^n(H)$ are completely isometric to each other.
		
	       $(v)$ This follows from the fact that the inclusion of $A(G/H)$ inside $A(G)$ is a complete isometry and the fact that the extended Haagerup tensor product is injective.
		
	       $(vi)$ This follows from Theorem 7.1.1 and Theorem 7.2.4 of \cite{ER2} and the extended Haagerup norm is a matrix subcross norm.
	   \end{proof}
	
	\subsection{Support of element of $VN^n(G)$}
	   We now define and study the support of an element of the dual $VN^n(G),$ of $A^n(G).$ The results of this subsection are motivated from \cite[Section 2.5]{KL2}. As most of the results follow as in the case $n=1,$ we omit the proofs.
	    \begin{prop}\label{Supp_Equi_Defn}
	       Let $T\in VN^n(G)$ and $a_1,a_2,\ldots,a_n\in G.$ Then the following are equivalent:
		  \begin{enumerate}[(i)]
		      \item The operator $\lambda_G(a_1)\otimes\lambda_G(a_2) \otimes\cdots \otimes\lambda_G(a_n)$ is the weak*-limit in $VN^n(G)$ of operators of the form $u\cdot T$ where $u\in A^n(G).$
		      \item For every neighbourhood $U$ of $(a_1,a_2,\ldots,a_n)\in G^n,$ $\exists\ u\in A^n(G)$ such that $supp(u)\subset U$ and $\langle u,T \rangle\neq 0.$
        		\item If $u\in A^n(G)$ is such that $u\cdot T=0$ then $\langle u,\lambda_G(a_1)\otimes\lambda_G(a_2)\otimes\cdots\otimes\lambda_G(a_n) \rangle =0.$
	       \end{enumerate}
	   \end{prop}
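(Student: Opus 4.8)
The plan is to prove the cyclic chain $(i)\Rightarrow(ii)\Rightarrow(iii)\Rightarrow(i)$ together with the direct implication $(i)\Rightarrow(iii)$, which jointly give the full equivalence. Write $\Lambda := \lambda_G(a_1)\otimes\cdots\otimes\lambda_G(a_n)\in VN^n(G)$. I would record two facts at the outset. First, $\Lambda$ acts on $A^n(G)$ as evaluation at the point $a=(a_1,\ldots,a_n)$, that is, $\langle u,\Lambda\rangle = u(a_1,\ldots,a_n)$ for every $u\in A^n(G)$; this is immediate from the representation $u(x_1,\ldots,x_n)=\Phi(\lambda(x_1),\ldots,\lambda(x_n))$ together with the $n=1$ identity $\langle u,\lambda_G(a)\rangle = u(a)$. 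Second, the module action satisfies the adjoint relation $\langle v,u\cdot T\rangle=\langle uv,T\rangle$ for all $u,v\in A^n(G)$, which uses only that $A^n(G)$ is a commutative Banach algebra under pointwise multiplication on $G^n$.

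For $(i)\Rightarrow(iii)$, suppose $\Lambda$ is the weak*-limit of a net $u_\alpha\cdot T$, and let $u\in A^n(G)$ satisfy $u\cdot T=0$. Then $\langle u,\Lambda\rangle=\lim_\alpha\langle u,u_\alpha\cdot T\rangle=\lim_\alpha\langle u_\alpha,u\cdot T\rangle=0$, using the adjoint relation and commutativity. For the converse $(iii)\Rightarrow(i)$, I would invoke the bipolar theorem: the weak*-closed subspace $K=\overline{\{u\cdot T:u\in A^n(G)\}}^{\,w\ast}$ contains $\Lambda$ precisely when every element of the predual annihilating $K$ also annihilates $\Lambda$. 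But $u\in A^n(G)$ annihilates $K$ iff $\langle uv,T\rangle=0$ for all $v$, i.e. iff $u\cdot T=0$; so $(iii)$ is exactly the assertion that such $u$ kill $\Lambda$, whence $\Lambda\in K$.

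The implication $(i)\Rightarrow(ii)$ uses regularity of $A^n(G)$ on its spectrum $G^n$. Given a neighbourhood $U$ of $a$, choose by regularity $w\in A^n(G)$ with $w(a)=1$ and $\operatorname{supp}(w)\subset U$. Since $\langle w,\Lambda\rangle=w(a)=1\neq0$ and $\langle w,\Lambda\rangle=\lim_\alpha\langle w u_\alpha,T\rangle$, some index $\alpha$ gives $\langle w u_\alpha,T\rangle\neq0$; then $u:=wu_\alpha$ has support inside $U$ and $\langle u,T\rangle\neq0$, which is $(ii)$.

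The remaining step $(ii)\Rightarrow(iii)$ is where I expect the main obstacle to lie. Assume $(ii)$ and suppose, for contradiction, that $u\cdot T=0$ while $u(a)\neq0$. By continuity $u$ is bounded away from zero on some neighbourhood $U$ of $a$, and by $(ii)$ there is $v\in A^n(G)$ with $\operatorname{supp}(v)\subset U$ and $\langle v,T\rangle\neq0$. The crux is a local division argument: because $u$ does not vanish on $U\supseteq\operatorname{supp}(v)$, one wants to produce $w\in A^n(G)$ with $uw=v$, whence $\langle v,T\rangle=\langle uw,T\rangle=\langle w,u\cdot T\rangle=0$, the desired contradiction. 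Here one must verify that $A^n(G)$ admits local inverses, namely that a function non-vanishing on an open set can be inverted against a compactly supported bump supported there; this is the genuine content that has to be transported from the one-dimensional theory. I would obtain it from the regularity and Tauberian properties of $A^n(G)$ due to Todorov and Turowska, mirroring the $n=1$ treatment in \cite[Section 2.5]{KL2}, with the density of $A(G^n)$ in $A^n(G)$ from Lemma \ref{FunctProp}(vi) serving as the tool that imports the classical local-invertibility statement for $A(G^n)$.
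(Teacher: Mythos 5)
The paper gives no proof of this proposition --- it declares that the results of this subsection ``follow as in the case $n=1$'' and points to \cite[Section 2.5]{KL2} --- so your proposal is really being measured against the standard one-dimensional argument, which it reproduces correctly in outline. The bipolar identification of $\{v\in A^n(G): v\cdot T=0\}$ with the preannihilator of the weak$^*$-closed subspace $\overline{\{u\cdot T: u\in A^n(G)\}}^{w^*}$ gives $(i)\Leftrightarrow(iii)$ cleanly, and $(i)\Rightarrow(ii)$ via regularity is fine. (You could also get $(iii)\Rightarrow(ii)$ more directly: if some neighbourhood $U$ of $a$ has $\langle w,T\rangle=0$ for every $w$ with $supp(w)\subset U$, then any $w$ with $w(a)=1$ and $supp(w)\subset U$ satisfies $w\cdot T=0$ while $w(a)\neq 0$, violating $(iii)$.)

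The step you rightly single out, $(ii)\Rightarrow(iii)$, has two defects as written. First, the neighbourhood $U$ on which $u$ is bounded away from zero must be chosen relatively compact (possible since $G^n$ is locally compact); otherwise $supp(v)$ need not be compact and no local-inversion statement applies to it. Second, and more substantively, your proposed source for local invertibility --- importing it from $A(G^n)$ through the dense contractive inclusion $A(G^n)\hookrightarrow A^n(G)$ of Lemma \ref{FunctProp}$(vi)$ --- does not work: the element $u$ to be divided lives in $A^n(G)$, not in $A(G^n)$, and the existence of local inverses in a dense subalgebra says nothing about dividing by an element outside that subalgebra. What is actually needed is the standard fact for regular semisimple commutative Banach algebras applied to $A^n(G)$ itself (whose regularity and whose Gelfand spectrum being $G^n$ the paper already takes for granted): equivalently, $j_{A^n(G)}(Z(I))\subseteq I$ for the closed ideal $I=\overline{uA^n(G)}$, whose zero set is $Z(u)$. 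Since $supp(v)\subseteq\overline{U}$ is compact and disjoint from $Z(u)$, this gives $v\in I$, hence $\langle v,T\rangle=\lim_k\langle uh_k,T\rangle=\lim_k\langle h_k,u\cdot T\rangle=0$, the desired contradiction. This is precisely the containment recorded in Section 4.1 of the paper, so the gap is closable, but the closure goes through the abstract Banach-algebra fact about $A^n(G)$, not through the density of $A(G^n)$.
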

	   \begin{defn}
		  For $T\in VN^n(G),$ the support of $T,$ denoted $supp(T),$ is the set of all $(a_1,a_2,\ldots,a_n)\in G^n$ satisfying any of the three equivalent conditions of the above proposition.
	   \end{defn}
	   \begin{lem}\label{Supp_prop1}
		  Let $G$ be a locally compact group and $T\in VN^n(G).$ 
		  \begin{enumerate}[(i)]
		      \item If $T\neq 0,$ then $supp(T)$ is a nonempty closed subset of $G^n.$
		      \item For $u\in A^n(G),$ $supp(u\cdot T)\subseteq supp(u)\cap supp(T).$
		      \item $supp(T)$ is the smallest closed set among all closed subsets $C$ of $G^n$ with the property that if $u\in A^n(G)$ has compact support and vanishes in an open $U$ containing $C,$ then $\langle u,T \rangle=0.$
		      \item $supp(T)$ is the smallest among all closed subsets $C$ of $G^n$ with the property that given any neighbourhood $U$ containing $C$ such that $G^n\setminus C$ is relatively compact, $T$ is the w*-limit of finite linear combinations of $\underset{1}{\overset{n}{\otimes}}\ \lambda_G(a_i),$ $(a_1,a_2,\ldots,a_n)\in U.$
		      \item Suppose that $(T_\alpha)_{\alpha\in\wedge}$ is a net in $VN^n(G)$ converging to $T$ in the w*-topology such that $supp(T_\alpha)\subseteq F$ for all $\alpha\in \wedge,$ for some closed set $F\subseteq G^n,$ then $supp(T)\subseteq F.$ 
		      \item If $S\in VN^n(G),$ then $supp(T+S)\subseteq supp(T)\cup supp(S).$ Further, equality holds if $supp(T)\cap supp(S)=\emptyset.$
		  \end{enumerate}
    	\end{lem}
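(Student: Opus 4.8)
The plan is to transcribe, dimension by dimension, the classical $n=1$ arguments (as in \cite[Section 2.5]{KL2}). The three tools are: the three equivalent descriptions of $\mathrm{supp}(T)$ in Proposition \ref{Supp_Equi_Defn}; the fact that $A^n(G)$ is a regular commutative Banach algebra with Gelfand spectrum $G^n$ in which one has partitions of unity with compact supports subordinate to open covers of compact sets, which I would deduce from the dense contractive inclusion $A(G^n)\hookrightarrow A^n(G)$ of Lemma \ref{FunctProp}(vi) together with the standard corresponding properties of $A(G^n)$; and the commutativity of the $A^n(G)$-module action on $VN^n(G)$, together with the identity $\langle v,\lambda_G(a_1)\otimes\cdots\otimes\lambda_G(a_n)\rangle=v(a_1,\ldots,a_n)$ coming from the definition of $A^n(G)$. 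I would record at the outset that, since compactly supported elements of $A(G^n)$ are dense in $A(G^n)$ and hence in $A^n(G)$, for any $0\neq T\in VN^n(G)$ there is a compactly supported $u\in A^n(G)$ with $\langle u,T\rangle\neq 0$.

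For (i): closedness is immediate from characterization (ii) of Proposition \ref{Supp_Equi_Defn} -- if an open $U$ witnesses $a\notin\mathrm{supp}(T)$, then the same $U$ witnesses $b\notin\mathrm{supp}(T)$ for every $b\in U$, so the complement of $\mathrm{supp}(T)$ is open; nonemptiness follows because if $\mathrm{supp}(T)=\emptyset$ one picks a compactly supported $u$ with $\langle u,T\rangle\neq 0$, covers the compact set $\mathrm{supp}(u)$ by finitely many open sets on which functions supported there annihilate $T$, takes a subordinate partition of unity $\{w_j\}\subseteq A(G^n)$ with compact supports, and writes $u=\sum_j w_j u$ to get $\langle u,T\rangle=0$, a contradiction. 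For (ii): if $a\notin\mathrm{supp}(T)$, use characterization (iii) to get $v\in A^n(G)$ with $v\cdot T=0$ and $v(a)\neq 0$; then $v\cdot(u\cdot T)=u\cdot(v\cdot T)=0$, so $a\notin\mathrm{supp}(u\cdot T)$; if instead $a\notin\mathrm{supp}(u)$, then $u$ vanishes on an open neighbourhood $W$ of $a$, and by regularity one picks $v$ with $\mathrm{supp}(v)\subseteq W$ and $v(a)\neq 0$, so $uv=0$, hence $v\cdot(u\cdot T)=(uv)\cdot T=0$ and again $a\notin\mathrm{supp}(u\cdot T)$.

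For (iii): that $\mathrm{supp}(T)$ has the stated property is the partition-of-unity argument of (i) applied to a compactly supported $u$ vanishing on an open $U\supseteq\mathrm{supp}(T)$, covering the compact set $\mathrm{supp}(u)$ (which misses $\mathrm{supp}(T)$) by neighbourhoods whose supported functions kill $T$; minimality holds because if $C$ has the property and $a\in\mathrm{supp}(T)\setminus C$ one chooses a relatively compact neighbourhood $U$ of $a$ with $\overline U\cap C=\emptyset$ and, by characterization (ii), $u$ with $\mathrm{supp}(u)\subseteq U$ and $\langle u,T\rangle\neq 0$, which vanishes on the open set $G^n\setminus\overline U\supseteq C$, contradicting the property of $C$. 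Part (iv) is the same argument run through characterization (i) (w*-approximation) of Proposition \ref{Supp_Equi_Defn} instead of (ii)/(iii). Part (v) is then immediate from (iii): any compactly supported $u\in A^n(G)$ vanishing on an open $U\supseteq F$ kills every $T_\alpha$, hence kills $T$, so $\mathrm{supp}(T)\subseteq F$. For the first half of (vi), intersect the two witnessing neighbourhoods; for equality when $\mathrm{supp}(T)\cap\mathrm{supp}(S)=\emptyset$, suppose $a\in\mathrm{supp}(T)$ but $a\notin\mathrm{supp}(T+S)$, take $v$ with $v\cdot(T+S)=0$ and $v(a)\neq 0$, and (by regularity, since $a\notin\mathrm{supp}(S)$) $w$ with $w(a)\neq 0$ and $\mathrm{supp}(w)\cap\mathrm{supp}(S)=\emptyset$ so that $w\cdot S=0$ by (i)--(ii); then $(vw)\cdot T=(vw)\cdot(T+S)-(vw)\cdot S=0$ while $(vw)(a)=v(a)w(a)\neq 0$, contradicting $a\in\mathrm{supp}(T)$ via characterization (iii).

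The only genuine point requiring care -- the main obstacle -- is the soft-analysis infrastructure rather than any single computation: one must be sure that $A^n(G)$ is regular with spectrum $G^n$, that it admits compactly supported partitions of unity subordinate to open covers of compact sets, and that its compactly supported elements are dense. All of this is harvested from the dense inclusion $A(G^n)\hookrightarrow A^n(G)$ of Lemma \ref{FunctProp}(vi) and the standard facts about the Fourier algebra of $G^n$; once it is in place, each part above is a routine adaptation of the classical $n=1$ proofs, which is why the paper omits the details.
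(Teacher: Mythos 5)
Your proposal is correct and takes exactly the route the paper intends: the paper omits the proof of this lemma, stating that the results "follow as in the case $n=1$" of \cite[Section 2.5]{KL2}, and your argument is a faithful transcription of those classical proofs, with the needed infrastructure (regularity of $A^n(G)$ with spectrum $G^n$, density of compactly supported elements, partitions of unity) correctly harvested from the dense inclusion $A(G^n)\hookrightarrow A^n(G)$ of Lemma \ref{FunctProp}(vi). No gaps.
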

	    \begin{lem}\label{Supp_prop2}
		  \mbox{}
		  \begin{enumerate}[(i)]
		      \item If $T_1,T_2,\ldots,T_n\in VN(G),$ then $supp\left(\underset{1}{\overset{n}{\otimes}}T_i\right)\subseteq \underset{1}{\overset{n}{\Pi}}supp(T_i).$
		      \item If $\alpha\in \mathbb{C}$ and $T\in VN^n(G),$ then $supp(\alpha T)=supp(T).$
		  \end{enumerate}
	   \end{lem}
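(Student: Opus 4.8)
The plan is to read both parts off condition (iii) of Proposition \ref{Supp_Equi_Defn}, which says that a point $(a_1,\dots,a_n)$ lies \emph{outside} $supp(T)$ exactly when some $u\in A^n(G)$ satisfies $u\cdot T=0$ while $\langle u,\lambda_G(a_1)\otimes\cdots\otimes\lambda_G(a_n)\rangle\neq 0$.

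Part $(ii)$ is essentially formal. For $\alpha\neq 0$ one has $u\cdot(\alpha T)=\alpha\,(u\cdot T)$, so $u\cdot(\alpha T)=0$ if and only if $u\cdot T=0$; hence the defining condition (iii) holds for $\alpha T$ at a given point precisely when it holds for $T$ there, and therefore $supp(\alpha T)=supp(T)$. (For $\alpha=0$ equality forces $T=0$, so the real content is the case $\alpha\neq 0$; I would either state it that way or simply remark that the support of the zero operator is empty.)

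For part $(i)$ the one preparatory fact I need is that the $A^n(G)$-module action on $VN^n(G)$ factors through the tensor legs on elementary tensors,
\[(u_1\otimes\cdots\otimes u_n)\cdot(T_1\otimes\cdots\otimes T_n)=(u_1\cdot T_1)\otimes\cdots\otimes(u_n\cdot T_n)\qquad(u_i\in A(G),\ T_i\in VN(G)),\]
and that the duality pairing is multiplicative there, $\langle u_1\otimes\cdots\otimes u_n,\ \lambda_G(a_1)\otimes\cdots\otimes\lambda_G(a_n)\rangle=\prod_{i=1}^{n}u_i(a_i)$. The first identity follows from $\langle u\cdot T,v\rangle=\langle T,uv\rangle$ for $u,v\in A^n(G)$ together with the fact that pointwise multiplication on $A^n(G)=\otimes^{eh}_n A(G)$ and its duality with $VN^n(G)$ are determined by their values on the dense subspace $\otimes_n A(G)$; the second is the standard multiplicativity of a tensor product of dual pairings. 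Granting this: suppose $(a_1,\dots,a_n)\notin\prod_{i=1}^{n}supp(T_i)$ and fix an index $j$ with $a_j\notin supp(T_j)$. The $n=1$ case of condition (iii) (as in \cite[Section 2.5]{KL2}) provides $u_j\in A(G)$ with $u_j\cdot T_j=0$ and $u_j(a_j)\neq 0$, while regularity of $A(G)$ provides, for each $i\neq j$, some $u_i\in A(G)$ with $u_i(a_i)\neq 0$. Setting $u:=u_1\otimes\cdots\otimes u_n\in A^n(G)$, the displayed identity gives $u\cdot(T_1\otimes\cdots\otimes T_n)=0$ because the $j$-th factor is zero, whereas $\langle u,\lambda_G(a_1)\otimes\cdots\otimes\lambda_G(a_n)\rangle=\prod_{i=1}^{n}u_i(a_i)\neq 0$. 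By condition (iii), $(a_1,\dots,a_n)\notin supp(T_1\otimes\cdots\otimes T_n)$, which is exactly the asserted inclusion.

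The only step needing real care is the verification of the two displayed factorization identities — this is precisely where one must unwind the $\otimes^{eh}$-construction rather than argue purely formally — but it is routine, being built into the definition of $A^n(G)$ and $VN^n(G)$. No genuine obstacle is expected: condition (iii) asks only for a \emph{single} witnessing $u$, which we produce as an elementary tensor, so at no point do we need to decompose an arbitrary element of $\otimes^{eh}_n A(G)$.
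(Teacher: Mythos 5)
Your proposal is correct, but it takes a genuinely different route from the paper for both parts. For $(i)$ the paper argues directly with condition $(ii)$ of Proposition \ref{Supp_Equi_Defn}: starting from $(x_1,x_2)\in supp(T_1\otimes T_2)$ it produces $u$ supported in $U_1\times U_2$ with $\langle u,T_1\otimes T_2\rangle\neq 0$, approximates $u$ by finite sums of elementary tensors, and extracts from a nonvanishing term functions $u_1,u_2$ witnessing $x_i\in supp(T_i)$; you instead argue contrapositively via condition $(iii)$, exhibiting a single elementary-tensor annihilator $u_1\otimes\cdots\otimes u_n$ of $T_1\otimes\cdots\otimes T_n$ that does not vanish at $(a_1,\ldots,a_n)$. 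Your route has the advantage of sidestepping the delicate point in the paper's argument that the approximating elementary tensors must be arranged to keep their supports inside $U_1\times U_2$ (the paper passes over this silently); the price is the factorization identity $(u_1\otimes\cdots\otimes u_n)\cdot(T_1\otimes\cdots\otimes T_n)=(u_1\cdot T_1)\otimes\cdots\otimes(u_n\cdot T_n)$, which you correctly reduce to the density of the algebraic tensor product in $A^n(G)$ (Lemma \ref{FunctProp}$(vi)$) and continuity of multiplication. For $(ii)$ the paper reduces ``without loss of generality'' to elementary tensors $T=T_1\otimes T_2$, a reduction that is not justified for general elements of $VN^n(G)$; your scalar argument $u\cdot(\alpha T)=\alpha(u\cdot T)$ is both more elementary and valid for arbitrary $T$, and you rightly flag that the stated equality fails as written when $\alpha=0$ and $T\neq 0$, a degenerate case the paper's formulation overlooks.
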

	   \begin{proof}
		  $(i)$ We shall prove this for $n=2$ and the general case follows along the same lines. Let $(x_1,x_2) \in supp(T_1 \otimes T_2)$. Let $U_1, U_2$ be neighbourhoods of $x_1$ and $x_2$ respectively. For $V= U_1 \times U_2,$ there exists $u\in A^2(G)$ such that $\langle u, T_1 \otimes T_2 \rangle \neq 0.$ So, there exists a sequence $\left\{ \sum_{1}^{m_n} u_1^{i,n} \otimes u_2^{i,n} \right\} \subseteq A(G) \otimes A(G)$ such that 
		  \begin{center}
		      $u= \displaystyle\lim_{n \to \infty} \sum_{1}^{m_n} u_1^{i,n} \otimes u_2^{i,n}$ and  $\displaystyle\lim_{n \to \infty} \sum_{1}^{m_n} T_1(u_1^{i,n}) T_2(u_2^{i,n}) \neq 0.$ 
	       \end{center}
	       This implies that there exists $u_1, u_2 \in A(G)$ such that $\langle u,T_1 \rangle \neq 0, \langle u_2, T_2 \rangle \neq 0$, $supp(u_1) \subseteq U_1$ and $supp(u_2) \subseteq U_2.$ Therefore, $x_1 \in supp(T_1)$ and $x_2 \in supp(T_2).$ Hence the proof. 
		
	       $(ii)$ As above, we will prove this only for $n=2.$ Note that, without loss of generality, we can assume that $T$ is of the form $T_1\otimes T_2,$ where $T_1, T_2\in VN(G).$ Then, for any $\alpha\in\mathbb{C},$ $$supp(\alpha(T_1\otimes T_2))= supp(\alpha T_1)\times supp(T_2) = supp(T_1)\times supp(\alpha T_2).$$ Hence proved.
	    \end{proof}
	
\section{Subgroup lemma and injection theorem for sets of spectral synthesis}
	In this section, we aim to prove the multidimensional analogue of the subgroup lemma and the injection theorem for sets of spectral synthesis. 
	
	\subsection{Spectral synthesis in commutative Banach algebras}
	   We shall begin by defining the notions of spectral sets and Ditkin sets that will be used throughout this paper.
	
	   Let $\mathcal{A}$ be a regular, semisimple, commutative Banach algebra with the Gelfand structure space $\Delta(\mathcal{A}).$ For a closed ideal $I$ of $\mathcal{A},$ the zero set of $I,$ denoted $Z(I),$ is a closed subset of $\Delta(\mathcal{A})$ defined as $$Z(I)=\{a\in \Delta(\mathcal{A}):\widehat{a}(x)=0\ \forall\ x\in I\}.$$ For a closed subset $E\subset\Delta(\mathcal{A}),$ we define the following ideals in $\mathcal{A}:$
	   \begin{eqnarray*}
	       j_{\mathcal{A}}(E) &=& \{a\in\mathcal{A}:\widehat{a}\mbox{ has compact support disjoint from E} \}\\ 
	       J_{\mathcal{A}}(E) &=& \overline{j_{\mathcal{A}}(E)} \\
	       I_{\mathcal{A}}(E) &=& \{a\in\mathcal{A}: \widehat{a} = 0\mbox{ on }E\}.
        \end{eqnarray*}
        Note that $J_{\mathcal{A}}(E)$ and $I_{\mathcal{A}}(E)$ are closed ideals in $\mathcal{A}$ with the zero set equal to $E$ and $j_{\mathcal{A}}(E)\subseteq I \subseteq I_{\mathcal{A}}(E)$ for any ideal $I$ with zero set $E.$ $E$ is said to be a {\it set of spectral synthesis} (or a \textit{spectral set}) for $\mathcal{A}$ if $I_{\mathcal{A}}(E) = J_{\mathcal{A}}(E).$ 
	
    	A closed $E$ is a \textit{Ditkin set} if for every $a\in I_{\mathcal{A}}(E),$ there exists a sequence $\{a_n\}\subset j_{\mathcal{A}}(E)$ such that $a.a_n$ converges in norm to $a.$ If the sequence can be chosen in such a way that it is bounded and is the same for all $a\in I_{\mathcal{A}}(E),$ then we say that $E$ is a \textit{strong Ditkin set.} Note that every Ditkin set is a set of spectral synthesis. 
	
    	For more on spectral synthesis see \cite{Kan1, Rei}.
	
    	Let $X$ be a $\mathcal{A}$-submodule of $\mathcal{A}^\ast.$ For a closed subset $E\subset\Delta(\mathcal{A}),$ we define the following ideals in $\mathcal{A}:$
    	\begin{eqnarray*}
	       I_{\mathcal{A}}^X(E) &=& \{a\in\mathcal{A}:\langle a,\varphi \rangle = 0\ \forall\ \varphi\in X\cap I_{\mathcal{A}}(E)\} \\
	       J_{\mathcal{A}}^X(E) &=& \{a\in\mathcal{A}:\langle a,\varphi \rangle = 0\ \forall\ \varphi\in X\cap J_{\mathcal{A}}(E)\}.
        \end{eqnarray*}
        We say that $E$ is a $X$-spectral set if $\varphi\in X$ with $supp(\varphi)\subseteq E,$ then $\varphi\in I_\mathcal{A}(E)^\perp.$ It is shown in \cite[Proposition 2.4]{PaPr1} that $E$ is a set of $X$-synthesis if $I_{\mathcal{A}}^X(E)=J_{\mathcal{A}}^X(E).$
	
	\subsection{Subgroup lemma and injection theorem}
	
	   We shall begin with a notation. For a closed subset $E$ of $G^n,$ we shall denote by $VN^n_E(G)$ the w*-closure of the linear span of the set $\{ \lambda_G(x_1)\otimes\lambda_G(x_2) \otimes \cdots \otimes \lambda_G(x_n):(x_1,x_2,\ldots,x_n)\in E\}.$
	
	   The following result gives a characterization of sets on synthesis in terms of the support of an element of $VN^n(G).$
	   \begin{prop}\label{Synth_Char_Supp}
		  Let $E\subseteq G^n$ be a closed set. Then $E$ is a set of synthesis for $A^n(G)$ if and only if for any $T\in VN^n(G)$ with $supp(T)\subseteq E$ we have $T\in VN^n_E(G).$
	   \end{prop}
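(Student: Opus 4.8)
The plan is to compute the annihilators of $I_{A^n(G)}(E)$ and $J_{A^n(G)}(E)$ inside $VN^n(G)$ and then invoke the bipolar theorem. Write $S_E=\{\lambda_G(x_1)\otimes\lambda_G(x_2)\otimes\cdots\otimes\lambda_G(x_n):(x_1,x_2,\ldots,x_n)\in E\}$, so that $VN^n_E(G)$ is by definition the $w^*$-closed linear span of $S_E$. The first point is to recognise the Gelfand transform concretely: for $(x_1,\ldots,x_n)\in G^n$ the evaluation character of $A^n(G)$ at this point is implemented by the operator $\lambda_G(x_1)\otimes\cdots\otimes\lambda_G(x_n)\in VN^n(G)$, because $\langle u_1\otimes\cdots\otimes u_n,\lambda_G(x_1)\otimes\cdots\otimes\lambda_G(x_n)\rangle=\prod_{i=1}^n u_i(x_i)$ and elementary tensors span a dense subspace of $A^n(G)=\otimes_n^{eh}A(G)$. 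Hence $u\in I_{A^n(G)}(E)$ if and only if $\langle u,T\rangle=0$ for every $T\in S_E$; that is, $I_{A^n(G)}(E)$ is the pre-annihilator of $S_E$, and the bipolar theorem for the dual pair $(A^n(G),VN^n(G))$ yields $I_{A^n(G)}(E)^{\perp}=VN^n_E(G)$.

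The second step, which is the heart of the matter, is to show $J_{A^n(G)}(E)^{\perp}=\{T\in VN^n(G):supp(T)\subseteq E\}$. For the inclusion $\supseteq$, suppose $supp(T)\subseteq E$ and take $u\in j_{A^n(G)}(E)$; then $supp(u)$ is compact and disjoint from $E$, hence disjoint from $supp(T)$, so $u$ vanishes on the open set $G^n\setminus supp(u)$, which contains $supp(T)$, and Lemma \ref{Supp_prop1}(iii) forces $\langle u,T\rangle=0$. Thus $T$ annihilates $j_{A^n(G)}(E)$ and, by continuity, its closure $J_{A^n(G)}(E)$. For the inclusion $\subseteq$, suppose $supp(T)\not\subseteq E$; choose $(a_1,\ldots,a_n)\in supp(T)\setminus E$ and a relatively compact open neighbourhood $U$ of $(a_1,\ldots,a_n)$ with $\overline{U}\cap E=\emptyset$. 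By Proposition \ref{Supp_Equi_Defn}(ii) there is $u\in A^n(G)$ with $supp(u)\subseteq U$ and $\langle u,T\rangle\neq 0$; since $supp(u)$ is then compact and disjoint from $E$, we have $u\in j_{A^n(G)}(E)\subseteq J_{A^n(G)}(E)$, so $T\notin J_{A^n(G)}(E)^{\perp}$.

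To conclude, note that $j_{A^n(G)}(E)\subseteq I_{A^n(G)}(E)$ always holds, hence $J_{A^n(G)}(E)\subseteq I_{A^n(G)}(E)$ and therefore $VN^n_E(G)=I_{A^n(G)}(E)^{\perp}\subseteq J_{A^n(G)}(E)^{\perp}=\{T:supp(T)\subseteq E\}$. By the bipolar theorem, $E$ is a set of synthesis for $A^n(G)$, i.e.\ $I_{A^n(G)}(E)=J_{A^n(G)}(E)$, precisely when $I_{A^n(G)}(E)^{\perp}=J_{A^n(G)}(E)^{\perp}$, and in view of the inclusion just noted this is equivalent to $\{T\in VN^n(G):supp(T)\subseteq E\}\subseteq VN^n_E(G)$, i.e.\ to the statement that every $T\in VN^n(G)$ with $supp(T)\subseteq E$ lies in $VN^n_E(G)$. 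The only genuine obstacle is the identification of $J_{A^n(G)}(E)^{\perp}$ with the support condition; it rests entirely on the support machinery of Proposition \ref{Supp_Equi_Defn} and Lemma \ref{Supp_prop1} already established above, while everything else is routine duality bookkeeping.
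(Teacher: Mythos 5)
Your proof is correct and follows essentially the same route as the paper: both arguments reduce the claim to the two annihilator identities $I_{A^n(G)}(E)^{\perp}=VN^n_E(G)$ and $J_{A^n(G)}(E)^{\perp}=\{T\in VN^n(G):supp(T)\subseteq E\}$, the latter obtained from Proposition \ref{Supp_Equi_Defn}(ii) together with the support lemma. The paper phrases the converse as a Hahn--Banach separation (choose $u\in I\setminus J$ and produce $T\in J^{\perp}$ with $supp(T)\subseteq E$ but $T\notin VN^n_E(G)$) rather than invoking the bipolar theorem explicitly, but this is only a cosmetic difference.
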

	   \begin{proof}
	       Suppose that $E$ is a set of synthesis for $A^n(G).$ Let $T\in VN^n(G)$ be such that $supp(T)\subseteq E.$ Then by $(ii)$ of Proposition \ref{Supp_Equi_Defn}, $\langle u,T \rangle=0$ for every $u\in A^n(G)$ with $supp(u)$ being compact and $supp(u)\cap E=\emptyset.$ Thus, by assumption, $$T\in j_{A^n(G)}(E)^\perp=J_{A^n(G)}(E)^\perp=I_{A^n(G)}(E)^\perp=VN^n_E(G)$$ and hence the proof of the forward part. We now prove the converse part. We shall prove this by assuming the contradiction. So, let us suppose that the set $E$ is not a set of synthesis for $A^n(G).$ Then $J_{A^n(G)}(E)$ is a proper closed ideal of $I_{A^n(G)}(E).$ Now, choose $u\in I_{A^n(G)}(E)\setminus J_{A^n(G)}(E).$ By Hahn-Banach theorem there exists $T\in VN^n(G)$ such that $T\in J_{A^n(G)}(E)^\perp$ and $\langle u,T \rangle\neq 0$ and hence by $(ii)$ of Proposition \ref{Supp_Equi_Defn} $supp(T)\subseteq E.$ On the other hand, $T\notin I_{A^n(G)}(E)^\perp=VN^n_E(G),$ a contradiction. 
	   \end{proof}
	   Here is the multidimensional analogue of the subgroup lemma. The proof given here is motivated by \cite[Theorem 3]{TaTa2}.
	   \begin{thm}\label{Example_Char}
	       Let $H$ be a closed subgroup of $G$ and let $T\in VN^n(G)$ Then $T\in VN^n_H(G)$ if and only if $supp(T)\subseteq H^n.$
	    \end{thm}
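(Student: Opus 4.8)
The plan is to prove the two implications separately: the forward one is essentially formal, while the reverse one is the multidimensional subgroup lemma itself.

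\textbf{Necessity of the support condition.} Suppose $T\in VN^n_H(G)$. Since $supp(\lambda_G(x))=\{x\}$ for every $x\in G$, Lemma~\ref{Supp_prop2}(i) gives $supp(\lambda_G(x_1)\otimes\cdots\otimes\lambda_G(x_n))\subseteq\{(x_1,\ldots,x_n)\}\subseteq H^n$ whenever $x_1,\ldots,x_n\in H$, and Lemma~\ref{Supp_prop1}(vi) propagates this to every finite linear combination of such elementary tensors. As $T$ is a weak$^\ast$-limit of a net of such combinations and $H^n$ is closed in $G^n$, Lemma~\ref{Supp_prop1}(v) yields $supp(T)\subseteq H^n$. (This half does not use that $H$ is a subgroup.)

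\textbf{Sufficiency.} This is the substantial direction. Recall, as in the proof of Proposition~\ref{Synth_Char_Supp}, that $I_{A^n(G)}(H^n)^\perp=VN^n_H(G)$, and observe that $J_{A^n(G)}(H^n)^\perp=\{S\in VN^n(G):supp(S)\subseteq H^n\}$: indeed, by Proposition~\ref{Supp_Equi_Defn} together with Lemma~\ref{Supp_prop1}(iii), the vanishing of $\langle u,S\rangle$ for all $u\in A^n(G)$ of compact support disjoint from $H^n$ is equivalent to $supp(S)\subseteq H^n$. Passing to pre-annihilators, the desired inclusion $\{S:supp(S)\subseteq H^n\}\subseteq VN^n_H(G)$ is equivalent to $I_{A^n(G)}(H^n)\subseteq J_{A^n(G)}(H^n)$, hence, by Proposition~\ref{Synth_Char_Supp}, to the assertion that $H^n$ is a set of synthesis for $A^n(G)$. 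So it suffices to establish the latter, and here I would adapt the argument of Takesaki and Tatsuuma \cite{TaTa2}. A routine reduction — using Lemma~\ref{Supp_prop1}(ii),(v), the regularity of $A^n(G)$, and the density in $A^n(G)$ of compactly supported functions (a consequence of Lemma~\ref{FunctProp}(vi)) — reduces matters to the local statement: if $u\in A^n(G)$ vanishes on $H^n$ and $K\subseteq H^n$ is compact, then $u$ is a norm-limit, in a neighbourhood of $K$, of functions vanishing on a neighbourhood of $K$ in $G^n$. Building these approximants uses that around any point of $H$ the group $G$ factors locally as $H$ times a transversal, so that $G^n$ factors near $H^n$ as $H^n$ times a transversal, with a compatible local factorisation of $A^n(G)=\otimes_n^{eh}A(G)$; in the $H^n$-direction there is nothing to synthesise, and in the transversal direction one reduces to the classical fact that single points are sets of synthesis for the relevant function algebra.

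The step I expect to be the main obstacle is carrying out this local-product argument at the level of the extended Haagerup tensor product rather than for a single copy of $A(G)$. I anticipate organising this as an induction on $n$, with the base case $n=1$ being precisely the Takesaki--Tatsuuma theorem \cite{TaTa2}: for the inductive step one writes $A^n(G)\cong A(G)\otimes^{eh}A^{n-1}(G)$, applies the induction hypothesis to $H^{n-1}\subseteq G^{n-1}$, and uses Lemma~\ref{FunctProp} to transfer restriction and quotient information among $A^n(G)$, $A^n(H)$ and $A^n(G/H)$. Once $H^n$ is known to be a set of synthesis, combining this with the necessity half gives the stated equivalence (and, via Proposition~\ref{Synth_Char_Supp}, the multidimensional subgroup lemma).
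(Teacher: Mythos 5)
Your necessity argument is fine and matches the paper. Your reduction of the sufficiency direction to the assertion that $H^n$ is a set of synthesis for $A^n(G)$ is also correct (it is exactly Proposition~\ref{Synth_Char_Supp} for $E=H^n$ together with $I_{A^n(G)}(H^n)^\perp=VN^n_H(G)$). The problem is that you then do not prove that assertion: note that in the paper the subgroup lemma (Corollary~\ref{SubGp_Lemma_Spectral}) is \emph{deduced from} Theorem~\ref{Example_Char}, so you have reduced the theorem to its own main corollary and must supply an independent argument, and the sketch you offer for it has two genuine gaps. First, ``around any point of $H$ the group $G$ factors locally as $H$ times a transversal'' is false for general locally compact groups: the quotient map $G\to G/H$ need not admit local continuous sections outside the Lie setting, so the whole local-product reduction is unavailable at the stated level of generality. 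Second, the proposed induction on $n$ via $A^n(G)\cong A(G)\otimes^{eh}A^{n-1}(G)$ silently assumes that if $H$ is a synthesis set for $A(G)$ and $H^{n-1}$ is one for $A^{n-1}(G)$, then $H\times H^{n-1}$ is one for the extended Haagerup tensor product. Synthesis does not pass to products/tensor products in this automatic way (this is precisely the sort of statement that requires a dedicated argument, and is why the paper proves a separate ``parallel synthesis'' theorem in Section~5), and you give no argument for the inductive step.

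The paper's proof avoids all of this by staying at the operator level, following Takesaki--Tatsuuma. Given $supp(T)\subseteq H^n$, one takes product open sets $V=V_1\times\cdots\times V_n$ with $VH^n=V$, uses Lemma~\ref{Supp_prop1}(ii) to see that $u\cdot T$ is supported in $V$ for $u\in A^n(G)\cap C_c(G^n)$ supported in $V$, and from the density of such $u$ in $P_VL^2(G^n)\cong P_{V_1}L^2(G)\otimes\cdots\otimes P_{V_n}L^2(G)$ concludes $TP_V=P_VT$. Hence $T$ lies in $\otimes_n^{\sigma h}\mathcal{A}'$, where $\mathcal{A}$ is the von Neumann algebra generated by the projections $P_{V_i}$, and the duality theorem of Takesaki and Tatsuuma \cite[Theorem 6]{TaTa1} identifies this commutant so as to give $T\in VN^n_H(G)$. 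If you want to salvage your outline, you should replace the local-factorisation/induction step by this commutant argument (or by some other complete proof that $H^n$ is a synthesis set that does not presuppose the theorem).
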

        \begin{proof}
	       Observe that the forward part is an easy consequence of $(v)$ of Lemma \ref{Supp_prop1} and $(i)$ of Lemma \ref{Supp_prop2}. We now prove the converse. Let us suppose that $supp(T)\subseteq H^n.$ Let $V$ be an open subset of $G^n$ such that $V=V_1\times V_2\times\cdots\times V_n,$ $V_i\subseteq G$ is open and $V=VH^n.$ Note that, for any $u\in A^n(G)\cap C_c(G^n)$ with $supp(u)\subseteq V,$ we have $supp(u\cdot T)\subseteq VH^n=V,$ by Lemma \ref{Supp_prop1}. It can be seen easily that the set of all $u\in A^n(G)\cap C_c(G^n)$ is dense in $P_VL^2(G^n)\cong P_{V_1}L^2(G)\otimes P_{V_2}L^2(G)\otimes\cdots\otimes P_{V_n}L^2(G),$ where $P_{V_i}$ denotes the projection on $L^2(G)$ defined by the characteristic function $\chi_{V_i}.$ Thus the range of $P_V$ is invariant under $T,$ i.e., $TP_V=P_VT.$ Considering $T^*,$ we have $T^*P_V=P_VT^*P_V.$ Let $\mathcal{A}$ denote the von Neumann algebra generated by the family of all such projections $P_{V_i}$ inside $L^\infty(G).$ Then $T\in \underset{1}{\overset{n}{\otimes^{\sigma h}}}\mathcal{A}^\prime$ and by \cite[Theorem 6]{TaTa1}, $T\in VN^n_H(G).$
	   \end{proof}
	   We have the following important result as a consequence of Proposition \ref{Synth_Char_Supp} and Theorem \ref{Example_Char}. 
	   \begin{cor}[Subgroup lemma]\label{SubGp_Lemma_Spectral}
	       Let $G$ be a locally compact group and $H$ a closed subgroup of $G.$ Then $H^n$ is a set of synthesis for $A^n(G).$
	   \end{cor}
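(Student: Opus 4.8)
The plan is to deduce the corollary immediately from the two results just proved, with essentially no new analysis. By Proposition \ref{Synth_Char_Supp} applied to the closed set $E=H^n\subseteq G^n$, the set $H^n$ is a set of synthesis for $A^n(G)$ precisely when every $T\in VN^n(G)$ with $supp(T)\subseteq H^n$ already lies in $VN^n_{H^n}(G)$. So the entire task reduces to verifying this one implication.

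First I would dispose of a harmless notational point: the space $VN^n_{H^n}(G)$, defined as the w*-closed linear span of $\{\lambda_G(x_1)\otimes\lambda_G(x_2)\otimes\cdots\otimes\lambda_G(x_n):(x_1,\ldots,x_n)\in H^n\}$, is the same as the space written $VN^n_H(G)$ in Theorem \ref{Example_Char}, since $H^n=H\times\cdots\times H$ and the two generating sets coincide. With this identification, Theorem \ref{Example_Char} asserts exactly that for $T\in VN^n(G)$ one has $T\in VN^n_{H^n}(G)$ if and only if $supp(T)\subseteq H^n$. In particular its ``if'' direction supplies precisely the implication demanded by Proposition \ref{Synth_Char_Supp}, and the proof is complete. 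If one wanted to be fully explicit, one would also recall that Proposition \ref{Synth_Char_Supp} rests on the chain $j_{A^n(G)}(H^n)^\perp=J_{A^n(G)}(H^n)^\perp=I_{A^n(G)}(H^n)^\perp=VN^n_{H^n}(G)$, so the containment $\{T:supp(T)\subseteq H^n\}\subseteq VN^n_{H^n}(G)$ is literally the statement $I_{A^n(G)}(H^n)=J_{A^n(G)}(H^n)$.

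I do not expect any genuine obstacle at this stage, because all the analytic weight has already been carried by Theorem \ref{Example_Char}: that the commutant-tensor-product argument with the projection von Neumann algebras $\mathcal{A}$, together with the Takesaki--Tatsuuma theorems, forces an operator supported in $H^n$ to be a w*-limit of elementary tensors $\otimes_i\lambda_G(a_i)$ with $a_i\in H$. What remains for the corollary is pure bookkeeping --- invoking Proposition \ref{Synth_Char_Supp} with $E=H^n$ and citing Theorem \ref{Example_Char} --- so the write-up should be only a couple of lines.
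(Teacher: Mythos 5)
Your proposal is correct and is exactly the paper's argument: the authors also obtain the corollary as an immediate consequence of Proposition \ref{Synth_Char_Supp} (applied with $E=H^n$) together with Theorem \ref{Example_Char}, whose ``if'' direction supplies the needed containment $\{T\in VN^n(G):supp(T)\subseteq H^n\}\subseteq VN^n_H(G)$. The identification $VN^n_{H^n}(G)=VN^n_H(G)$ is indeed only notational, so nothing further is required.
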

	   \begin{cor}
	       Singletons are sets of spectral synthesis.
	   \end{cor}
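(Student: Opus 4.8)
The plan is to deduce this from the subgroup lemma (Corollary~\ref{SubGp_Lemma_Spectral}) together with translation invariance of $A^n(G)$. The trivial subgroup $\{e\}$ is a closed subgroup of $G$, so Corollary~\ref{SubGp_Lemma_Spectral} applied to $H=\{e\}$ gives immediately that $\{e\}^n=\{(e,e,\ldots,e)\}$ is a set of synthesis for $A^n(G)$. It then remains to transport this property to an arbitrary singleton $\{(a_1,\ldots,a_n)\}\subseteq G^n$.

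To do this I would use coordinate-wise left translation. For $s\in G$, the left translation $L_s u(x)=u(s^{-1}x)$ is a completely isometric automorphism of $A(G)$ (it is predual to left multiplication by the unitary $\lambda_G(s)$ on $VN(G)$). Since $A^n(G)=\otimes_n^{eh}A(G)$ and the extended Haagerup tensor product is functorial in completely isometric isomorphisms (as already invoked in Lemma~\ref{FunctProp}), the map $\tau:=L_{a_1}\otimes\cdots\otimes L_{a_n}$ extends to a completely isometric automorphism of $A^n(G)$; on the dense subalgebra $A(G)\odot\cdots\odot A(G)$ it is clearly an algebra homomorphism, hence so on all of $A^n(G)$. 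At the level of functions $\tau f(x_1,\ldots,x_n)=f(a_1^{-1}x_1,\ldots,a_n^{-1}x_n)$, i.e.\ $\tau$ is implemented by the homeomorphism $(x_1,\ldots,x_n)\mapsto(a_1x_1,\ldots,a_nx_n)$ of $G^n$, which carries $(e,\ldots,e)$ to $(a_1,\ldots,a_n)$.

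Finally, because $\tau$ is an isometric algebra isomorphism of $A^n(G)$ whose underlying spectral map is the above homeomorphism of $G^n$, it carries $I_{A^n(G)}(\{(e,\ldots,e)\})$ onto $I_{A^n(G)}(\{(a_1,\ldots,a_n)\})$, and, since it preserves compact supports, it carries $j_{A^n(G)}(\{(e,\ldots,e)\})$ onto $j_{A^n(G)}(\{(a_1,\ldots,a_n)\})$ and hence their closures as well. Applying $\tau$ to the equality $I_{A^n(G)}(\{(e,\ldots,e)\})=J_{A^n(G)}(\{(e,\ldots,e)\})$ yields $I_{A^n(G)}(\{(a_1,\ldots,a_n)\})=J_{A^n(G)}(\{(a_1,\ldots,a_n)\})$, so $\{(a_1,\ldots,a_n)\}$ is a set of synthesis. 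The only step needing a little care is the assertion that coordinate-wise translation is a (completely isometric) automorphism of $A^n(G)$; once the functoriality of $\otimes^{eh}$ is granted this is routine, and in any case only the Banach-algebra automorphism property is needed for the synthesis conclusion. (Alternatively, one could argue directly from Proposition~\ref{Synth_Char_Supp}, but then, after using Theorem~\ref{Example_Char} with $H=\{e\}$ to handle the identity singleton, one would still be led back to the same translation argument for a general point.)
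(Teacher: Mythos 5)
Your proof is correct and follows the route the paper itself intends: the corollary is stated without proof as an immediate consequence of Corollary~\ref{SubGp_Lemma_Spectral} applied to $H=\{e\}$, with the passage to an arbitrary singleton handled exactly by the coordinate-wise translation argument you spell out. Your write-up simply makes explicit the routine details (translation as a completely isometric algebra automorphism of $A^n(G)$ carrying $I$ and $j$ ideals to their translates) that the paper leaves to the reader.
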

	   Before we state our next result, here is a remark.
	   \begin{rem}
	       If $X\subseteq VN^n(G)$ is an $A^n(G)$-module, then $(r_n^*)^{-1}(X)$ is an $A^n(H)$-submodule of $VN^n(H).$ We shall denote this submodule as $X_H.$ Further, if $T\in X_H$ and $supp(T)\subseteq E\subseteq H,$ then $supp(r_n^*(T))\subseteq E\subseteq G.$
	   \end{rem}    
	   Here is the promised result on the multidimensional analogue of the injection theorem for spectral synthesis.
	   \begin{thm}[Injection theorem]\label{Inj_Thm_Sp_Syn}
	       Suppose that $H$ is a closed subgroup of a locally compact group $G$ and $X$ an $A^n(G)$-submodule of $VN^n(G).$ Then a closed subset $E$ of $H^n$ is a set of $X_H$-synthesis for $A^n(H)$ if and only if $E$ is a set of $X$-synthesis for $A^n(G).$
	   \end{thm}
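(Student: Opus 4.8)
The plan is to move the problem onto the subgroup via the functorial identifications of Lemma \ref{FunctProp} together with the subgroup lemma (Corollary \ref{SubGp_Lemma_Spectral}), and then to compare the two $X$-synthesis conditions through the characterisation of $X$-synthesis by supports recorded in \cite{PaPr1}: a closed set $F$ is a set of $Y$-synthesis (for the relevant algebra and module $Y$) precisely when every $\varphi$ in $Y$ with $\operatorname{supp}(\varphi)\subseteq F$ annihilates the vanishing ideal of $F$. Since $X_H$ is an $A^n(H)$-module (the Remark preceding the theorem), this characterisation applies on both sides.

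First I would fix the dual picture. By Lemma \ref{FunctProp}$(iv)$ the restriction map gives a complete isometry $A^n(H)\cong A^n(G)/I_{A^n(G)}(H^n)$, so its adjoint $r_n^{*}\colon VN^n(H)\to VN^n(G)$ is a weak$^{*}$-homeomorphic complete isometry onto $I_{A^n(G)}(H^n)^{\perp}$, sending $\lambda_H(x_1)\otimes\cdots\otimes\lambda_H(x_n)$ to $\lambda_G(x_1)\otimes\cdots\otimes\lambda_G(x_n)$ for $x_1,\dots,x_n\in H$. Since $H^n$ is a set of synthesis for $A^n(G)$ (Corollary \ref{SubGp_Lemma_Spectral}), Lemma \ref{Supp_prop1}$(iii)$ yields $I_{A^n(G)}(H^n)^{\perp}=J_{A^n(G)}(H^n)^{\perp}=\{T\in VN^n(G):\operatorname{supp}(T)\subseteq H^n\}$, so $r_n^{*}$ identifies $VN^n(H)$ with the operators in $VN^n(G)$ whose support lies in $H^n$; also $X_H=(r_n^{*})^{-1}(X)$ by definition. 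Throughout I would use freely the duality $\langle r_n(u),\psi\rangle=\langle u,r_n^{*}\psi\rangle$ for $u\in A^n(G)$, $\psi\in VN^n(H)$, and the two elementary facts that $r_n$ maps $I_{A^n(G)}(E)$ into $I_{A^n(H)}(E)$ and that, by Lemma \ref{FunctProp}$(iii)$, every $v\in I_{A^n(H)}(E)$ extends to some $u\in A^n(G)$ with $u|_{H^n}=v$, hence with $u\in I_{A^n(G)}(E)$ since $E\subseteq H^n$.

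The one point that genuinely needs work is a support-matching lemma: for $\psi\in VN^n(H)$ one has $\operatorname{supp}_{VN^n(H)}(\psi)=\operatorname{supp}_{VN^n(G)}(r_n^{*}\psi)$ inside $H^n\subseteq G^n$. The inclusion $\subseteq$ is the Remark preceding the theorem. For $\supseteq$, suppose $a=(a_1,\dots,a_n)\in H^n$ lies outside $\operatorname{supp}(r_n^{*}\psi)$; by Proposition \ref{Supp_Equi_Defn} choose a relatively compact open neighbourhood $U$ of $a$ in $G^n$ and an open $U'$ with $a\in U'$ and $\overline{U'}\subseteq U$ such that $\langle u,r_n^{*}\psi\rangle=0$ whenever $u\in A^n(G)$ and $\operatorname{supp}(u)\subseteq U$. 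Using that $A(G^n)$ is regular and lies densely in $A^n(G)$ (Lemma \ref{FunctProp}$(vi)$), pick $w\in A(G^n)\subseteq A^n(G)$ with $\operatorname{supp}(w)\subseteq U$ and $w\equiv 1$ on a neighbourhood of the compact set $\overline{U'}\cap H^n$. Then for any $v\in A^n(H)$ with $\operatorname{supp}(v)\subseteq U'\cap H^n$, extending $v$ to $\widetilde v\in A^n(G)$ via Lemma \ref{FunctProp}$(iii)$ and setting $u=\widetilde v\,w$, we get $\operatorname{supp}(u)\subseteq\operatorname{supp}(w)\subseteq U$ while $r_n(u)=v\cdot(w|_{H^n})=v$; hence $\langle v,\psi\rangle=\langle u,r_n^{*}\psi\rangle=0$, and so $a\notin\operatorname{supp}(\psi)$ by Proposition \ref{Supp_Equi_Defn}. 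This localisation step — in effect, that passing to a closed subgroup does not shrink supports — is where regularity enters and is the main obstacle; if the support subsection already records it in the form used for $n=1$, one simply quotes it, and the rest of the argument is formal.

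Granting this, both implications follow. If $E$ is a set of $X$-synthesis for $A^n(G)$ and $\psi\in X_H$ has $\operatorname{supp}(\psi)\subseteq E$, then $T:=r_n^{*}\psi\in X$ has $\operatorname{supp}(T)\subseteq E$ by the Remark, so $T$ annihilates $I_{A^n(G)}(E)$; for $v\in I_{A^n(H)}(E)$, lifting $v$ to $u\in I_{A^n(G)}(E)$ as above gives $\langle v,\psi\rangle=\langle u,T\rangle=0$, so $E$ is a set of $X_H$-synthesis for $A^n(H)$. Conversely, if $E$ is a set of $X_H$-synthesis for $A^n(H)$ and $T\in X$ has $\operatorname{supp}(T)\subseteq E\subseteq H^n$, then $T=r_n^{*}\psi$ for a unique $\psi\in VN^n(H)$, and $\psi\in X_H$; by the support-matching lemma $\operatorname{supp}(\psi)\subseteq E$, so $\psi$ annihilates $I_{A^n(H)}(E)$, and for $u\in I_{A^n(G)}(E)$ we have $r_n(u)\in I_{A^n(H)}(E)$, whence $\langle u,T\rangle=\langle r_n(u),\psi\rangle=0$. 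Thus $E$ is a set of $X$-synthesis for $A^n(G)$, which completes the proof.
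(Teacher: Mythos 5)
Your argument is correct and follows essentially the same route as the paper's proof: identify $T$ with an element of $VN^n(H)$ via $r_n^{*}$ using the subgroup lemma, match supports, and conclude by duality with the restriction map. The only difference is one of detail: you supply a full localisation argument for the support-matching step (and write out the forward implication), whereas the paper dismisses both with ``it is clear'' and ``trivial''; note only that the two inclusions in your support-matching lemma are labelled the wrong way round, though you prove exactly the ones you need.
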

	   \begin{proof}
	       We will only prove the backward part as the forward part is trivial. So, let us assume that $E$ is a set of $X_H$-synthesis concerning $A^n(H).$ We now claim that $E$ is a set of $X$-synthesis with respect to $A^n(G).$ Let $T\in X$ with $supp(T)\subseteq E.$ By Corollary \ref{SubGp_Lemma_Spectral}, $H^n$ is a set of synthesis with respect to $A^n(G)$ and hence $T\in (I_{A^n(G)}(H^n))^\perp.$ Therefore, there exists $S\in X_H$ such that $r_n^*(S)=T.$ It is clear that $supp(S)\subseteq E$ and hence, for any $u\in I(E^n),$ $$\langle T,u \rangle=\langle S,r_n(u) \rangle=0,$$ thereby proving the theorem.
	   \end{proof}
	
	\subsection{Ideals with bounded approximate identities}
	   In the final part of this section, we prove the existence of bounded approximate identities in the ideals $I_{A^n(G)}(H^n),$ where $H$ is a closed subgroup of $G.$ Our strategy here is to use a similar result for $A(G^n)$ \cite{FKLS}.
	   \begin{thm}\label{IBAI}
	       Let $G$ be an amenable locally compact group and let $E$ be a closed subset of $G$ such that $E$ is a set of spectral synthesis for $A^n(G).$ If the closed ideal $I_{A(G^n)}(E)$ has a bounded approximate identity with bound $c,$ then so has $I_{A^n(G)}(E).$
	   \end{thm}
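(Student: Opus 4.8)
The plan is to transport a bounded approximate identity along the canonical inclusion $\iota\colon A(G^n)\hookrightarrow A^n(G)$. By part (vi) of Lemma \ref{FunctProp} this $\iota$ is a complete contraction with dense range, and since it is literally the inclusion of one function space into another it is also an algebra homomorphism for the (pointwise) products. Let $\{e_\alpha\}$ be a bounded approximate identity for $I_{A(G^n)}(E)$ with $\|e_\alpha\|_{A(G^n)}\le c$, and set $f_\alpha:=\iota(e_\alpha)\in A^n(G)$. Then $\|f_\alpha\|_{A^n(G)}\le c$ because $\iota$ is contractive, and $f_\alpha\in I_{A^n(G)}(E)$ because $e_\alpha$, hence $f_\alpha$, vanishes on $E$. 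It therefore remains to prove that $f_\alpha u\to u$ in $A^n(G)$ for every $u\in I_{A^n(G)}(E)$. Since the net $\{f_\alpha\}$ is bounded by $c$ and, by the spectral synthesis hypothesis, $I_{A^n(G)}(E)=J_{A^n(G)}(E)=\overline{j_{A^n(G)}(E)}$, a standard $3\varepsilon$ argument reduces the claim to the case $u\in j_{A^n(G)}(E)$.

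So I would fix $u\in j_{A^n(G)}(E)$ and put $K:=supp(u)$, a compact subset of $G^n$ disjoint from the closed set $E$. By the regularity of the Fourier algebra $A(G^n)$ \cite{Eym1}, there is $v\in A(G^n)\cap C_c(G^n)$ with $v\equiv 1$ on $K$ and $supp(v)\subseteq G^n\setminus E$; in particular $v\in j_{A(G^n)}(E)\subseteq I_{A(G^n)}(E)$, and $uv=u$ as functions on $G^n$, hence $uv=u$ in $A^n(G)$ (recall that $A^n(G)$ is semisimple with carrier space $G^n$, so the two products agree). Now $e_\alpha v\to v$ in $A(G^n)$ because $v\in I_{A(G^n)}(E)$, so applying the contraction $\iota$ gives $f_\alpha v=\iota(e_\alpha v)\to\iota(v)=v$ in $A^n(G)$, and therefore
\[
 f_\alpha u \;=\; f_\alpha (uv) \;=\; (f_\alpha v)\,u \;\longrightarrow\; vu \;=\; u \qquad\text{in } A^n(G),
\]
by continuity of multiplication in $A^n(G)$. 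This yields the required convergence on the dense ideal $j_{A^n(G)}(E)$, and combined with the uniform bound $c$ it shows that $\{f_\alpha\}$ is a bounded approximate identity for $I_{A^n(G)}(E)$, with the same bound $c$.

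The one point that needs care is the localization in the second paragraph: the cut-off $v$ must be produced inside $A(G^n)$ — not merely inside $A^n(G)$ — so that the hypothesis on $I_{A(G^n)}(E)$ can be applied to it. This is precisely why the argument is routed through $\iota$ together with the classical regularity of $A(G^n)$, and it is also where one uses that $\iota$ is simultaneously contractive and multiplicative, so that both the approximate-identity property and the constant $c$ descend from $A(G^n)$ to $A^n(G)$. Amenability of $G$ is not invoked in this transfer itself; it is the natural ambient hypothesis under which $I_{A(G^n)}(E)$ actually admits a bounded approximate identity, via the characterizations of \cite{FKLS}.
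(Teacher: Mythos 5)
Your proof is correct and follows essentially the same route as the paper: both transfer the bounded approximate identity through the contractive, multiplicative, dense-range inclusion $A(G^n)\hookrightarrow A^n(G)$, use the synthesis hypothesis to reduce to $j_{A^n(G)}(E)$, and use regularity of $A(G^n)$ to manufacture a cut-off lying in $I_{A(G^n)}(E)$ that acts as the identity on the given element. The only difference is organizational — the paper first proves density of the image of $I_{A(G^n)}(E)$ in $I_{A^n(G)}(E)$ and then runs the standard transfer, whereas you fold the two steps into one convergence argument.
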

	   \begin{proof}
	       Let $\Theta$ denote the inclusion of $A(G^n)\cong A(G)\hat{\otimes}A(G)\hat{\otimes}\cdots\hat{\otimes}A(G)$($n$-times) inside $A^n(G).$ If is clear that $\Theta(I_{A(G^n)}(E))\subseteq I_{A^n(G)}(E).$ We first claim that $\Theta(I_{A(G^n)}(E))$ is dense in $I_{A^n(G)}(E).$ Let $u\in I_{A^n(G)}(E)$ and let $\epsilon>0$ be arbitrary. Since $E$ is a set of spectral synthesis for $A^n(G)$ there exists $v\in j_{A^n(G)}(E)$ such that $\|u-v\|_{A^n(G)}<\epsilon/2.$ Choose an open set $U\subseteq G^n$ such that $supp(v)\subseteq U\subseteq E^c.$ Since $A(G^n)$ is regular, there exists $w\in A(G^n)$ such that $w|_{supp(v)}\equiv 1$ and $supp(w)\subseteq U.$ Observe that $v\Theta(w)=v.$ As $A(G^n)$ is dense in $A^n(G)$ there exists $v^\prime\in A(G^n)$ such that $\|v-\Theta(v^\prime)\|_{A^n(G)}<\frac{\epsilon}{2\|\Theta(w)\|_{A^n(G)}}.$ Note that $v^\prime w\in I_{A(G^n)}(E).$ Now, 
		  \begin{eqnarray*}
		      \|u-\Theta(v^\prime w)\|_{A^n(G)} &\leq& \|u-v\|_{A^n(G)}+\|v-\Theta(v^\prime w)\|_{A^n(G)} \\ &\leq& \frac{\epsilon}{2} + \|v\Theta(w)-\Theta(v^\prime)\Theta(w))\|_{A^n(G)} \\ &\leq& \frac{\epsilon}{2}+\|\Theta(w)\|_{A^n(G)}\|v-\Theta(v^\prime)\|_{A^n(G)} <\epsilon, 
		  \end{eqnarray*}
		  thereby proving our claim.
		
	       Suppose that $\{u_\alpha\}_{\alpha\in\wedge}$ is an approximate identity for $I_{A(G^n)}(E)$ such that $\|u_\alpha\|$ $\leq c$ for all $\alpha\in\wedge.$ Let $u\in I_{A^n(G)}(E)$ and let $\epsilon>0$ be arbitrary. Using the density of $\Theta(I_{A(G^n)}(E))$ in $I_{A^n(G)}(E),$ we can find $v\in I_{A(G^n)}(E)$ such that $\|u-\Theta(v)\|_{A^n(G)}<\frac{\epsilon}{2}.$ Since $\{u_\alpha\}$ is an approximate identity, there exists $\alpha_0\in\wedge$ such that $\|v-vu_\alpha\|_{A(G^n)}<\frac{\epsilon}{2}$ for all $\alpha\geq\alpha_0.$ Now, using the fact that $\Theta$ is a contraction (by $(iv)$ of Lemma \ref{FunctProp}), 
	       \begin{eqnarray*}
	           \|u-\Theta(u_\alpha)u\|_{A^n(G)} &\leq& \|u-\Theta(v)\|_{A^n(G)} + \|\Theta(v)-\Theta(vu_\alpha)\|_{A^n(G)} \\ &\leq& \frac{\epsilon}{2} + \|\Theta\| \|v-vu_\alpha\| <\epsilon,
		   \end{eqnarray*}
	       whenever $\alpha\geq\alpha_0,$ thereby showing that $\{\Theta(u_\alpha)\}_{\alpha\in \wedge}$ is the required approximate identity.
       \end{proof}
	   \begin{cor}\label{BAIId}
	       Let $G$ be an amenable locally compact group and let $H$ be a closed subgroup of $G.$ Then the ideal $I_{A^n(G)}(H^n)$ has a bounded approximate identity.
        \end{cor}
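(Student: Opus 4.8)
The plan is to obtain this as an immediate consequence of Theorem \ref{IBAI}, applied with $E = H^n$, so the work consists entirely of checking that the three hypotheses of that theorem are met. First I would observe that since $G$ is amenable, the finite direct product $G^n$ is again amenable, and $H^n$ is a closed subgroup of $G^n$. By the theorem on ideals with bounded approximate identities in Fourier algebras of Forrest, Kaniuth, Lau and Spronk \cite{FKLS}, for an amenable group the vanishing ideal of any closed subgroup has a bounded approximate identity; applied to $H^n \le G^n$ this gives a bounded approximate identity for $I_{A(G^n)}(H^n)$, say with bound $c$.

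Next, by Corollary \ref{SubGp_Lemma_Spectral}, the set $H^n$ is a set of spectral synthesis for $A^n(G)$. Thus all hypotheses of Theorem \ref{IBAI} hold for the closed set $E = H^n$: the group $G$ is amenable, $H^n$ is a spectral set for $A^n(G)$, and $I_{A(G^n)}(H^n)$ carries a bounded approximate identity of bound $c$. Invoking Theorem \ref{IBAI} then produces a bounded approximate identity for $I_{A^n(G)}(H^n)$, which is precisely the assertion.

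The only place that deserves attention is the citation to \cite{FKLS}: one must make sure the result there is applied to the closed subgroup $H^n$ of the amenable group $G^n$ (and not, say, to $H$ inside $G$), and that the bound it supplies is the same constant $c$ that is then fed into Theorem \ref{IBAI}. Beyond this bookkeeping the argument is a straightforward substitution, so I do not expect any substantive obstacle.
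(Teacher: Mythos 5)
Your proposal is correct and follows exactly the route of the paper, which likewise combines the Forrest--Kaniuth--Lau--Spronk result applied to $H^n\le G^n$, Corollary \ref{SubGp_Lemma_Spectral}, and Theorem \ref{IBAI}. Your write-up merely spells out the bookkeeping (amenability of $G^n$, the identification of the bound $c$) that the paper leaves implicit.
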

        \begin{proof}
	       By combining \cite[Theorem 1.5]{FKLS}, Corollary \ref{SubGp_Lemma_Spectral} and Theorem \ref{IBAI}, the proof of this follows.
        \end{proof}
	
\section{Ditkin sets}
	
	In this section, we prove the subgroup lemma, injection theorem, and inverse projection theorem for Ditkin sets.
	
	\subsection{Subgroup lemma}
        We now proceed to prove the subgroup lemma. To do this we begin with a simple lemma. The proof of this is a consequence of the fact that this lemma is true for $A(G^n)$ and the inclusion of $A(G^n)$ into $A^n(G)$ is a contraction.
        \begin{lem}
	       Given an open set $U$ containing $e\in G^n$ and an $\epsilon>0$ there are an open set $V$ of $e$ in $G^n$ and $u\in A^n(G)$ such that 
	       \begin{enumerate}[a)]
                \item $V\subseteq U,$
                \item $u(x)=1\ \forall\ x\in V,$ 
                \item $supp(u)\subseteq U$ and 
                \item $\|u\|_{A^n(G)}<1+\epsilon.$
            \end{enumerate} 
        \end{lem}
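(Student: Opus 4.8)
The plan is to produce $u$ first inside the ordinary Fourier algebra $A(G^{n})$ of the product group $G^{n}$, and then to transport it into $A^{n}(G)$ along the contractive inclusion $\Theta\colon A(G^{n})\hookrightarrow A^{n}(G)$ furnished by (vi) of Lemma~\ref{FunctProp}. The key observation is that $\Theta$ is the \emph{identity} map at the level of functions, so the pointwise conditions a)--c) are unaffected by the transfer, while the norm estimate $\|\Theta(v)\|_{A^{n}(G)}\le\|v\|_{A(G^{n})}$ delivers d).

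First I would record the statement for $A(G^{n})$: given an open $U\ni e$ in $G^{n}$ and $\epsilon>0$, there are an open $V\ni e$ with $V\subseteq U$ and $v\in A(G^{n})$ such that $v\equiv 1$ on $V$, $supp(v)\subseteq U$, and $\|v\|_{A(G^{n})}<1+\epsilon$. This is classical (cf.\ \cite{Eym1}); concretely, if $W$ is a compact symmetric neighbourhood of $e$ with $W^{3}\subseteq U$ and $V$ is a small open symmetric neighbourhood with $\overline{V}\subseteq W$, then $v(x)=|W|^{-1}\langle\lambda_{G^{n}}(x)\chi_{\overline{V}W},\chi_{W}\rangle=|W|^{-1}\,|x\overline{V}W\cap W|$ does the job, where $|\cdot|$ denotes left Haar measure on $G^{n}$: it is a normalised coefficient function of $\lambda_{G^{n}}$, so $v\in A(G^{n})$ with $\|v\|_{A(G^{n})}\le(|\overline{V}W|/|W|)^{1/2}\to 1$ as $V$ shrinks to $\{e\}$ (by regularity of Haar measure, since $W\subseteq\overline{V}W$); $v\equiv 1$ on $V$ because $x^{-1}W\subseteq\overline{V}W=A$ for $x\in V$, whence $W\subseteq xA$ and $v(x)=|W|/|W|=1$; and $v(x)\neq 0$ forces $x\overline{V}W\cap W\neq\emptyset$, i.e.\ $x\in W(\overline{V}W)^{-1}=W^{2}\overline{V}\subseteq W^{3}\subseteq U$, so $supp(v)\subseteq U$.

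Finally I would set $u=\Theta(v)$. On an elementary tensor $u_{1}\otimes\cdots\otimes u_{n}$ the map $\Theta$ is the identity, sending it to the function $(x_{1},\dots,x_{n})\mapsto\prod_{i}u_{i}(x_{i})$ on $G^{n}$; since evaluation at a point of $G^{n}$ is a continuous character of both $A(G^{n})$ and $A^{n}(G)$, and elementary tensors are dense, a routine continuity argument gives $\Theta(v)=v$ as functions on $G^{n}$. Hence $u$ inherits a)--c) from $v$, while $\|u\|_{A^{n}(G)}=\|\Theta(v)\|_{A^{n}(G)}\le\|v\|_{A(G^{n})}<1+\epsilon$ gives d). The only step that is not purely formal is this identification of $\Theta(v)$ with $v$ at the level of functions — exactly the point where one uses that $\Theta$ is the natural inclusion and not merely some complete contraction; everything else is the elementary Haar-measure estimate above together with (vi) of Lemma~\ref{FunctProp}.
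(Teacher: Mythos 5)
Your proposal is correct and follows exactly the route the paper indicates: establish the statement for the classical Fourier algebra $A(G^{n})$ and push the resulting function through the contractive inclusion $A(G^{n})\hookrightarrow A^{n}(G)$ of Lemma \ref{FunctProp}(vi). The only difference is that you spell out the standard Haar-measure coefficient-function construction and the identification of $\Theta(v)$ with $v$ as functions, which the paper leaves implicit.
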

        \begin{nota}
            If $H$ is a closed subgroup of a locally compact group $G,$ then we shall denote by $q_n$ the canonical quotient map from $G^n$ onto $(G/H)^n$
        \end{nota}
        The following is an analogue of \cite[Proposition 10]{Der1}
        \begin{prop}\label{Derighetti_Prop}
            Let $H$ be a closed normal subgroup of $G.$ Given $u\in A^n(G)$ and $\epsilon,\eta>0$ there exists $v\in A^n(G/H)$ and an open set $W$ of $H^n$ with $v\circ q_n\equiv 1$ on $W$ such that $\|v\|_{A^n(G/H)}\leq 1+\eta$ and $\|u(v\circ q_n)\|_{A^n(G)}<\epsilon + \|u|_{H^n}\|_{A^n(H)}.$
        \end{prop}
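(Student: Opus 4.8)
The plan is to imitate the proof of \cite[Proposition 10]{Der1}: replace $u$ by a norm-preserving extension of $u|_{H^n}$, and then kill the resulting error by multiplying with a function that equals $1$ near $H^n$ and vanishes on a compact set disjoint from $H^n$; this function will be pulled back from $(G/H)^n$ through $q_n$, which is legitimate because $v\mapsto v\circ q_n$ is an isometry of $A^n(G/H)$ into $A^n(G)$ by part $(v)$ of Lemma \ref{FunctProp}. The structural inputs are the subgroup lemma (Corollary \ref{SubGp_Lemma_Spectral}), the norm-preserving extension in part $(iii)$ of Lemma \ref{FunctProp}, and the preceding Lemma applied to the quotient group $G/H$.

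In detail, I would fix $\epsilon,\eta>0$, set $M:=\|u|_{H^n}\|_{A^n(H)}$, and first choose auxiliary constants $\eta'\in(0,\eta]$ and $\epsilon'>0$ small enough that $\eta'M+\epsilon'(1+\eta')<\epsilon$; this is possible since both $\eta'$ and $\epsilon'$ may be taken arbitrarily small. By part $(iii)$ of Lemma \ref{FunctProp} applied to $u|_{H^n}\in A^n(H)$, pick $w\in A^n(G)$ with $w|_{H^n}=u|_{H^n}$ and $\|w\|_{A^n(G)}=M$. Then $u-w\in I_{A^n(G)}(H^n)$, and since $H^n$ is a set of synthesis for $A^n(G)$ by Corollary \ref{SubGp_Lemma_Spectral}, we have $u-w\in J_{A^n(G)}(H^n)=\overline{j_{A^n(G)}(H^n)}$; hence there is $z\in j_{A^n(G)}(H^n)$ with $\|u-w-z\|_{A^n(G)}<\epsilon'$.

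Now $supp(z)$ is compact and disjoint from $H^n=q_n^{-1}(\{e\})$, so $K:=q_n(supp(z))$ is a compact subset of $(G/H)^n$ that does not contain the identity $e$. Applying the preceding Lemma to the locally compact group $G/H$, with $U=(G/H)^n\setminus K$ and parameter $\eta'$, I obtain an open neighbourhood $\widetilde V$ of $e$ in $(G/H)^n$ and $v\in A^n(G/H)$ with $\widetilde V\subseteq U$, $v\equiv 1$ on $\widetilde V$, $supp(v)\subseteq U$, and $\|v\|_{A^n(G/H)}<1+\eta'\le 1+\eta$. Set $W:=q_n^{-1}(\widetilde V)$; this is an open set containing $H^n$ on which $v\circ q_n\equiv 1$. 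Moreover $v$ vanishes on $K=q_n(supp(z))$, so $v\circ q_n$ vanishes on $supp(z)$, and therefore $z\,(v\circ q_n)=0$ in $A^n(G)$.

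It then remains only to estimate. Using $z(v\circ q_n)=0$ we may write $u\,(v\circ q_n)=w\,(v\circ q_n)+(u-w-z)(v\circ q_n)$, and, since $\|v\circ q_n\|_{A^n(G)}=\|v\|_{A^n(G/H)}$ by part $(v)$ of Lemma \ref{FunctProp} and $A^n(G)$ is a Banach algebra,
\[
\|u\,(v\circ q_n)\|_{A^n(G)}\le\big(\|w\|_{A^n(G)}+\|u-w-z\|_{A^n(G)}\big)\,\|v\|_{A^n(G/H)}<(M+\epsilon')(1+\eta')<M+\epsilon,
\]
which is exactly $\|u|_{H^n}\|_{A^n(H)}+\epsilon$. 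I expect the only delicate point to be this last bookkeeping: the factor $1+\eta'$ coming from $\|v\circ q_n\|$ is unavoidable, so $\eta'$ and $\epsilon'$ must be pinned down at the very start so as to be absorbed into $\epsilon$; everything else is a routine combination of the three structural inputs together with the elementary fact that a compact set disjoint from the closed subgroup $H^n$ has $q_n$-image avoiding the identity of $(G/H)^n$.
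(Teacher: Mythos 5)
Your proposal is correct and follows essentially the same route as the paper: a norm-preserving extension $w$ of $u|_{H^n}$ via part $(iii)$ of Lemma \ref{FunctProp}, the subgroup lemma to approximate $u-w$ by an element of $j_{A^n(G)}(H^n)$, and the preceding bump-function lemma applied in $(G/H)^n$ with $v$ supported away from the image of that element's support. The only differences are organizational --- the paper first treats the case $u|_{H^n}\equiv 0$ and then reduces to it, whereas you run the argument in one pass with the constants $\eta',\epsilon'$ fixed up front --- and your version has the minor advantage of handling $\|u|_{H^n}\|=0$ without a separate remark.
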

        \begin{proof}
            Suppose that $u\in A^n(G)$ such that $u|_{H^n}\equiv 0$ and let $\epsilon,\eta>0.$ Since $H^n$ is a set of synthesis (Corollary \ref{SubGp_Lemma_Spectral}), there exists $u^\prime\in A^n(G)\cap C_c(G^n)$ such that $supp(u^\prime)\cap H^n=\emptyset$ and $\|u-u^\prime\|_{A^n(G)}<\frac{\epsilon}{1+\eta}.$ Choose disjoint open sets $\widetilde{U}$ and $\widetilde{V}$ of $(G/H)^n$ such that $\dot{e}:=q_n(e)\in\widetilde{U},$ $q_n(supp(u^\prime))\subseteq \widetilde{V}.$ By previous Lemma, there exists $v\in A^n(G/H)$ and an open set $\widetilde{W}$ of $\dot{e}\in(G/H)^n$ such that $\widetilde{W}\subseteq \widetilde{U},$ $v\equiv 1$ on $\widetilde{W},$ $supp(v)\subseteq\widetilde{U}$ and $\|v\|<1+\eta.$ Let $W=(q_n)^{-1}(\widetilde{W}).$ This $v$ and $W$ will satisfy the requirements.
            
            Now, let $u\in A^n(G)$ be arbitrary. Without loss of generality, let us even assume that $\|u|_{H^n}\|_{A^n(H)}\neq 0.$ There exists $w\in A^n(G)$ such that $\|w\|_{A^n(G)}=\|u|_{H^n}\|_{A^n(H)}$ and $u|_{H^n}=w|_{H^n}.$ Let $u_1=u-w.$ Then $u_1|_{H^n}\equiv 0$ and hence by the above case there exists an open set $W$ of $H^n$ and $v\in A^n(G/H)$ such that $v\circ q_n\equiv 1$ on $W,$ $\|u_1(v\circ q_n)\|_{A^n(G)}<\frac{\epsilon}{2}$ and $\|v\|_{A^n(G/H)}<1+\rho,$ where $\rho$ is a positive real number such that $\rho<\min\left\{\eta,\frac{\epsilon}{2\|u|_{H^n}\|_{A^n(H)}}\right\}.$ Then
	        \begin{align*}
	           \|(v\circ q_n)u\|_{A^n(G)} =& \|(v\circ q_n)(u_1+w)\|_{A^n(G)} \\ \leq& \|(v\circ q_n)u_1\|_{A^n(G)} + \|(v\circ q_n)w\|_{A^n(G)} \\ <& \frac{\epsilon}{2} + (1+\rho)\|u|_{H^n}\|_{A^n(H)} < \epsilon + \|u|_{H^n}\|_{A^n(H)}. \qedhere
	        \end{align*}
        \end{proof}
        Our next result is the subgroup lemma for local Ditkin sets, where the subgroups are normal.
        \begin{thm}\label{SubGp_Lemma_Ditkin}
            Let $H$ be a closed normal subgroup of $G.$ Then $H^n$ is a locally Ditkin set for $A^n(G).$
        \end{thm}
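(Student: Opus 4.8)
The plan is to verify Ditkin's condition for $H^n$ inside $A^n(G)$ directly, with Proposition \ref{Derighetti_Prop} as the main engine; normality of $H$ enters precisely because it is what makes that proposition (and the quotient group $G/H$) available, and because it is what gives the complete isometric inclusion $A^n(G/H)\hookrightarrow A^n(G)$ of Lemma \ref{FunctProp}$(v)$. Recall that it suffices to prove the following: for every $u\in I_{A^n(G)}(H^n)$ with compact support and every $\epsilon>0$ there is $\psi\in j_{A^n(G)}(H^n)$ with $\|u-u\psi\|_{A^n(G)}<\epsilon$, i.e. $u\in\overline{u\cdot j_{A^n(G)}(H^n)}$. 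The passage from this to the local Ditkin property for an arbitrary element of $I_{A^n(G)}(H^n)$ tested against an arbitrary point of $G^n$ is the standard localization: one multiplies by a compactly supported bump function furnished by the regularity of $A(G^n)$ together with the dense contraction $A(G^n)\hookrightarrow A^n(G)$ of Lemma \ref{FunctProp}$(vi)$.

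Now fix $u\in I_{A^n(G)}(H^n)\cap C_c(G^n)$ and $\epsilon>0$. Since $u|_{H^n}\equiv 0$ we have $\|u|_{H^n}\|_{A^n(H)}=0$, so Proposition \ref{Derighetti_Prop} (taking, say, $\eta=1$) produces $v\in A^n(G/H)$ and an open set $W\supseteq H^n$ with $v\circ q_n\equiv 1$ on $W$ and $\|u(v\circ q_n)\|_{A^n(G)}<\epsilon$. By Lemma \ref{FunctProp}$(v)$ the function $v\circ q_n$ belongs to $A^n(G)$, hence $u(v\circ q_n)\in A^n(G)$ and
$$u-u(v\circ q_n)=u(1-v\circ q_n).$$
This last function vanishes on $W$ and is supported in $supp(u)$, so it has compact support and vanishes on the open neighbourhood $W$ of $H^n$; in particular it already lies in $j_{A^n(G)}(H^n)$.

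To exhibit it as $u\psi$ with $\psi\in j_{A^n(G)}(H^n)$, use the regularity of $A(G^n)$ and Lemma \ref{FunctProp}$(vi)$ to pick $\psi_0\in A^n(G)\cap C_c(G^n)$ with $\psi_0\equiv 1$ on $supp(u)$, and set $\psi=\psi_0-\psi_0(v\circ q_n)\in A^n(G)$. Then $\psi$ has compact support contained in $supp(\psi_0)$ and vanishes on $W\supseteq H^n$, so $\psi\in j_{A^n(G)}(H^n)$; and since $\psi_0\equiv 1$ on $supp(u)$ we get $u\psi=u(1-v\circ q_n)$, whence
$$\|u-u\psi\|_{A^n(G)}=\|u(v\circ q_n)\|_{A^n(G)}<\epsilon.$$
Thus $u\in\overline{u\cdot j_{A^n(G)}(H^n)}$, and with the localization step this shows that $H^n$ is a local Ditkin set for $A^n(G)$.

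The only genuinely delicate point is the bookkeeping with supports: $1-v\circ q_n$ is \emph{not} an element of $A^n(G)$ (it does not vanish at infinity on $G^n$), so every step must keep a true $A^n(G)$-factor, and this is exactly what forces both the reduction to compactly supported $u$ and the introduction of the auxiliary bump $\psi_0$. Everything else is formal and rests only on the Banach-algebra/module structure of $A^n(G)$, the inclusion $A^n(G/H)\hookrightarrow A^n(G)$, and Proposition \ref{Derighetti_Prop}. If the paper's definition of local Ditkin set is phrased pointwise (for each $x\in G^n$ and each $u\in I_{A^n(G)}(H^n)$ there is $w\in A^n(G)$ with $w\equiv 1$ near $x$ and $uw\in\overline{u\,j_{A^n(G)}(H^n)}$), the same computation applies after replacing $u$ by $uw_x$ for a compactly supported bump $w_x$ at $x$.
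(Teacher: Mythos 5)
Your argument is correct and takes essentially the same route as the paper: the paper's proof consists of citing Derighetti's Th\'eor\`eme 11 as the template once Proposition \ref{Derighetti_Prop} is available, and what you have written is precisely that template carried out in detail (apply the proposition to $u$ with $\|u|_{H^n}\|_{A^n(H)}=0$ to make $\|u(v\circ q_n)\|_{A^n(G)}<\epsilon$, then use a compactly supported bump $\psi_0$ to realize $u(1-v\circ q_n)$ as $u\psi$ with $\psi\in j_{A^n(G)}(H^n)$). No gaps.
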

        \begin{proof}
            Given Proposition \ref{Derighetti_Prop}, the proof of this follows the same lines as in \cite[Th\'eor\`eme 11]{Der1}.
        \end{proof}
        As an immediate corollary, we obtain the following.
        \begin{cor}
            Singletons are local Ditkin sets.
        \end{cor}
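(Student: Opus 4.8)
The plan is to derive the corollary from Theorem \ref{SubGp_Lemma_Ditkin} by exploiting the translation covariance of $A^n(G)$. First observe that the trivial subgroup $\{e\}$ is a closed normal subgroup of $G$ and that $\{e\}^n$ is precisely the one-point set $\{(e,\dots,e)\}$ in $G^n$; so Theorem \ref{SubGp_Lemma_Ditkin} immediately gives that $\{(e,\dots,e)\}$ is a local Ditkin set for $A^n(G)$. It therefore remains only to transfer this conclusion to an arbitrary singleton $\{(a_1,\dots,a_n)\}\subseteq G^n$.

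To do this I would use left translations. For $a=(a_1,\dots,a_n)\in G^n$, let $L_a$ be the map defined on elementary tensors by $L_a(u_1\otimes\cdots\otimes u_n)=L_{a_1}u_1\otimes\cdots\otimes L_{a_n}u_n$, where $L_{a_i}$ denotes left translation by $a_i$ on $A(G)$. Since each $L_{a_i}$ is a completely isometric algebra automorphism of $A(G)$ (a classical fact, induced by the left regular representation) and $A^n(G)=\otimes_n^{eh}A(G)$, the functoriality of the extended Haagerup tensor product — the same principle invoked in Lemma \ref{FunctProp} — shows that $L_a$ extends to a completely isometric algebra automorphism of $A^n(G)$, whose induced homeomorphism of the Gelfand spectrum $G^n$ is the translation $x\mapsto ax$. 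Consequently $L_a$ carries $j_{A^n(G)}(E)$, $J_{A^n(G)}(E)$ and $I_{A^n(G)}(E)$ onto $j_{A^n(G)}(aE)$, $J_{A^n(G)}(aE)$ and $I_{A^n(G)}(aE)$ respectively, and it respects the local neighbourhood structure occurring in the definition of a local Ditkin set. Hence $E$ is a local Ditkin set for $A^n(G)$ if and only if $aE$ is. Applying this with $E=\{(e,\dots,e)\}$ gives that $\{(a_1,\dots,a_n)\}=a\cdot\{(e,\dots,e)\}$ is a local Ditkin set, as required.

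The only step that needs a little verification — and it is routine rather than a genuine obstacle — is checking that left translation really does extend to a completely isometric automorphism of $\otimes_n^{eh}A(G)$ intertwining the relevant ideals and supports; this rests on the classical fact that translation on $A(G)$ is a complete isometry, together with the functoriality and injectivity properties of the extended Haagerup tensor product already used in the proof of Lemma \ref{FunctProp}.
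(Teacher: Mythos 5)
Your proposal is correct and is essentially the argument the paper intends: the corollary is stated as an immediate consequence of Theorem \ref{SubGp_Lemma_Ditkin}, namely apply it to the trivial closed normal subgroup $H=\{e\}$ to get that $\{(e,\dots,e)\}$ is a local Ditkin set, and then move to an arbitrary singleton via the completely isometric translation automorphisms of $A^n(G)=\otimes_n^{eh}A(G)$. No further comment is needed.
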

	
	\subsection{Injection theorem}
	We now proceed to prove the injection theorem. We shall begin with a lemma which gives one part of the injection theorem.
	\begin{lem}\label{DSInjThmFP}
		Let $X$ be an $A^n(G)$-submodule of $VN^n(G)$ and let $H$ be a closed subgroup of $G.$ If $E\subseteq H^n \subseteq G^n$ is a closed subset then $E$ is an $X$-Ditkin set for $A^n(G)$ implies that $E$ is an $X_H$-Ditkin set for $A^n(H).$
	\end{lem}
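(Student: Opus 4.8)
The plan is to transfer the Ditkin property along the restriction map $r_n \colon A^n(G) \to A^n(H)$ and its adjoint $r_n^* \colon VN^n(H) \to VN^n(G)$, exploiting the subgroup lemma (Corollary \ref{SubGp_Lemma_Spectral}) which guarantees that $H^n$ is a set of synthesis, so that $(I_{A^n(G)}(H^n))^\perp = VN^n_{H^n}(G)$ is completely isometric to $VN^n(H)$ by $(iv)$ of Lemma \ref{FunctProp} and the identification $VN_H^n(G) \cong VN^n(H)$. Concretely: given $T \in X_H$ with $\mathrm{supp}(T) \subseteq E$, we must produce a bounded (or, for plain $X_H$-Ditkin, merely a norm-convergent) sequence witnessing the Ditkin condition in $A^n(H)$, and we will obtain it by pushing $T$ forward to $r_n^*(T) \in X$, which by the Remark preceding the statement has support inside $E \subseteq G^n$, applying the hypothesis that $E$ is an $X$-Ditkin set for $A^n(G)$, and pulling the resulting witnesses back through $r_n$.

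The key steps, in order, are as follows. First, fix $T \in X_H$ with $\mathrm{supp}(T) \subseteq E$ and set $S := r_n^*(T) \in VN^n(G)$; by the Remark, $S \in X$ and $\mathrm{supp}(S) \subseteq E$. Second, apply the hypothesis: since $E$ is an $X$-Ditkin set for $A^n(G)$, there is a sequence $\{w_k\} \subseteq j_{A^n(G)}(E)$ — bounded if we are in the strong/bounded case — with $w_k \cdot S \to S$ in the appropriate sense (for $X$-Ditkin, this means $\langle w_k u - u, S \rangle \to 0$ for all $u \in I_{A^n(G)}(E)$, or the norm formulation as set up in the preceding subsection). Third, restrict: put $v_k := r_n(w_k) \in A^n(H)$. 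Because $\mathrm{supp}(w_k)$ is compact and disjoint from $E$, its restriction to $H^n$ has compact support disjoint from $E$, so $v_k \in j_{A^n(H)}(E)$; and $r_n$ is a complete contraction by $(i)$ of Lemma \ref{FunctProp}, so boundedness is preserved. Fourth, check convergence: for $u \in I_{A^n(H)}(E)$, lift $u$ to some $\tilde u \in A^n(G)$ with $\tilde u|_{H^n} = u$ via $(iii)$ of Lemma \ref{FunctProp}; then $\langle v_k u - u, T \rangle = \langle r_n(w_k \tilde u - \tilde u), T \rangle = \langle w_k \tilde u - \tilde u, r_n^*(T) \rangle = \langle w_k \tilde u - \tilde u, S \rangle$, and since $\tilde u \in I_{A^n(G)}(E)$ this tends to $0$ by the $X$-Ditkin hypothesis. (In the norm-convergence formulation of Ditkin, one argues instead with $\|v_k u - u\|$ controlled by $\|w_k \tilde u - \tilde u\|_{A^n(G)}$ using the contractivity of $r_n$, after noting $r_n(w_k)\, u = r_n(w_k \tilde u)$.)

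I expect the main obstacle to be bookkeeping around the precise notion of "$X$-Ditkin" and which convergence is being asserted — the excerpt defines Ditkin and strong Ditkin sets in terms of norm convergence $a \cdot a_n \to a$, but the module-relative versions $I_{\mathcal A}^X(E)$, $J_{\mathcal A}^X(E)$ are phrased via annihilators, so I would first nail down that the $X$-Ditkin condition is equivalent to saying $j_{\mathcal A}(E)$ is weak*-dense in $X^\perp \cap I_{\mathcal A}(E)^{\perp\perp}$-type statements, or more practically that for each $\varphi \in X$ with $\mathrm{supp}(\varphi) \subseteq E$ there are $w_k \in j_{\mathcal A}(E)$ with $w_k \cdot \varphi \to \varphi$ weak*. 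Once that reformulation is in place, the argument above is essentially formal: the only genuinely nontrivial input is that $r_n^*(T)$ still has support inside $E$, which is exactly the content of the Remark, and that lifts $\tilde u$ of elements of $I_{A^n(H)}(E)$ again lie in $I_{A^n(G)}(E)$, which is immediate since $\tilde u$ agrees with $u$ on $H^n \supseteq E$. So the proof is a diagram chase through Lemma \ref{FunctProp} and the support Remark, with Corollary \ref{SubGp_Lemma_Spectral} ensuring the pushforward $r_n^*$ lands where we need it.
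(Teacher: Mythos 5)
Your proposal is correct and follows essentially the same route as the paper: extend $u\in I_{A^n(H)}(E)$ to $\tilde u\in A^n(G)$ via $(iii)$ of Lemma \ref{FunctProp}, push $T\in X_H$ to $r_n^*(T)\in X$, apply the $X$-Ditkin hypothesis in $A^n(G)$, and restrict the witness back through $r_n$ using $\langle r_n(w_k\tilde u)-u,T\rangle=\langle w_k\tilde u-\tilde u,r_n^*(T)\rangle$. The only cosmetic differences are that the paper quantifies over arbitrary $S\in X_H$ without restricting to $\mathrm{supp}(T)\subseteq E$ (your argument does not actually use that restriction either, so it should be dropped), and neither the subgroup lemma nor the support Remark is needed for this direction.
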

	\begin{proof}
		Suppose that $E$ is an $X$-Ditkin set for $A^n(G).$  In order to prove that $E$ is an $X_H$-Ditkin set for $A^n(H),$ it is enough to show that for any given $u\in A^n(H)$ with $u(x)=0$ for all $x\in E$ and $S\in X_H,$ there exists $v\in A^n(H)$ such that $v$ vanishes in an open set $U$ with $E\subseteq U\subseteq H^n$ and $\langle u,S \rangle = \langle uv,S \rangle.$ So, let $u\in A^n(H)$ such that $u|_E\equiv 0$ and let $S\in X_H.$ Since $S\in X_H,$ $r_n^*(S)\in X.$ Choose $u_1\in A^n(G)$ such that such that $u_1$ extends $u.$ Now, by assumption, $E$ is an $X$-Ditkin set for $A^n(G).$ Therefore, there exists $v_1\in A^n(G)$ such that $v_1(x)=0$ for all $x$ in some open set $U_1$ such that $E\subseteq U_1\subseteq G^n$ and $\langle u_1,r_n^*(S) \rangle = \langle u_1v_1,r_n^*(S) \rangle.$ Now, 
		\begin{eqnarray*}
			\langle u,S \rangle &=& \langle r_n(u_1),S \rangle = \langle u_1,r_n^*(S) \rangle \\ &=& \langle v_1u_1,r_n^*(S) \rangle = \langle r_n(v_1u_1),S \rangle = \langle r_n(v_1)u,S \rangle,
		\end{eqnarray*}
		thereby proving the Lemma.
	\end{proof}
	Here is the injection theorem under the assumption that the subgroup is normal.
	\begin{thm}[Injection theorem for Ditkin sets]\label{Inj_Thm_Ditkin_1}
		Let $H$ be a closed normal subgroup of $G$ and let $E$ be a closed subset of $H^n.$ Then $E$ is a local Ditkin set for $A^n(G)$ if and only if $E$ is a local Ditkin set for $A^n(H).$
	\end{thm}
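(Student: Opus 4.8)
The plan is to deduce Theorem \ref{Inj_Thm_Ditkin_1} from the machinery already assembled, in close analogy with Derighetti's proof in the one-dimensional setting. The forward implication — that a local Ditkin set $E\subseteq H^n$ for $A^n(G)$ is a local Ditkin set for $A^n(H)$ — is exactly the content of Lemma \ref{DSInjThmFP} taken with $X=VN^n(G)$ (so that $X_H=VN^n(H)$); recall that a local Ditkin set is nothing but an $X$-Ditkin set for the full module $X=VN^n(G)$. Hence only the converse requires work: assume $E$ is a local Ditkin set for $A^n(H)$ and show it is one for $A^n(G)$.

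For the converse, fix $u\in A^n(G)$ with $u|_E\equiv 0$ and fix $S\in VN^n(G)$; we must produce $v\in A^n(G)$ vanishing on a neighbourhood of $E$ in $G^n$ with $\langle u,S\rangle=\langle uv,S\rangle$. The first step is to invoke the subgroup lemma for local Ditkin sets (Theorem \ref{SubGp_Lemma_Ditkin}): since $H^n$ is a local Ditkin set for $A^n(G)$, and since $u|_{H^n}$ need not vanish but $E\subseteq H^n$, I would instead argue in two stages. Stage one: use Proposition \ref{Derighetti_Prop} to find $w_0=v_0\circ q_n\in A^n(G)$ with $v_0\in A^n(G/H)$, identically $1$ on an open neighbourhood $W$ of $H^n$, such that $\|u\,w_0\|_{A^n(G)}$ is as close as we like to $\|u|_{H^n}\|_{A^n(H)}$; this localizes the problem near $H^n$ and lets us replace $u$ by a function essentially supported near $H^n$ while keeping track of the pairing with $S$ modulo a function vanishing near $H^n\supseteq E$. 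Stage two: restrict to $H^n$, where $r_n(u)\in A^n(H)$ vanishes on $E$; apply the hypothesis that $E$ is a local Ditkin set for $A^n(H)$ to the element $r_n^*(S)|_{H^n}\in VN^n(H)$ obtained by restriction, yielding $\tilde v\in A^n(H)$ vanishing on an open $U\subseteq H^n$ containing $E$ with $\langle r_n(u),r_n^*(S)|_{H^n}\rangle=\langle r_n(u)\tilde v,\cdots\rangle$. Then lift $\tilde v$ to $A^n(G)$ via (iii) of Lemma \ref{FunctProp}, multiply by a suitable $v_0\circ q_n$ from Proposition \ref{Derighetti_Prop} to kill the behaviour transverse to $H^n$ and shrink the vanishing set to an open subset of $G^n$ containing $E$, and assemble the pieces so that the two pairings agree up to an $\epsilon$; since $\epsilon$ was arbitrary, the approximate Ditkin condition upgrades to the exact one by a standard closure argument using that $j_{A^n(G)}(E)$ is an ideal.

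The step I expect to be the main obstacle is the bookkeeping in stage two: one must simultaneously (a) transfer the functional $S$ faithfully between $G$ and $H$ — which is where the normality of $H$ and Proposition \ref{Derighetti_Prop} do the real work, replacing the naive restriction map $r_n^*$ by a genuine conditional-expectation-type device $u\mapsto v_0\circ q_n\cdot u$ that is close to the identity on functions supported near $H^n$ — and (b) ensure that the final function $v\in A^n(G)$ vanishes not merely on a neighbourhood of $E$ inside $H^n$ but on a genuinely open set of $G^n$ containing $E$. Handling (b) requires combining the open set $U\subseteq H^n$ from the $A^n(H)$-Ditkin hypothesis with the transverse neighbourhood $W=q_n^{-1}(\widetilde W)$ from Proposition \ref{Derighetti_Prop}; because $H$ is normal the quotient $G/H$ is a group and $q_n$ is open, so a box neighbourhood in $(G/H)^n$ pulls back to the sort of tube $V=V_1\times\cdots\times V_n$ with $V=VH^n$ used in the proof of Theorem \ref{Example_Char}, and intersecting appropriately produces the required open set in $G^n$. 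Once these two points are in place, the norm estimates are routine, exactly as in \cite[Th\'eor\`eme 11]{Der1}, and the proof concludes.
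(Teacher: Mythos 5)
Your proposal follows essentially the same route as the paper: the forward implication is Lemma \ref{DSInjThmFP} with $X=VN^n(G)$, and the converse adapts Derighetti's Th\'eor\`eme 12 using Proposition \ref{Derighetti_Prop}, which is exactly what the paper does (the paper likewise only sketches this direction). The one ingredient the paper names that you leave implicit is Theorem \ref{Inj_Thm_Sp_Syn} (or Corollary \ref{SubGp_Lemma_Spectral}): your object $r_n^*(S)|_{H^n}$ does not literally exist, since $r_n^*$ maps $VN^n(H)$ \emph{into} $VN^n(G)$; one must first use the spectral-synthesis subgroup lemma to show the localized functional lies in $(I_{A^n(G)}(H^n))^\perp=VN^n_{H^n}(G)\cong VN^n(H)$ and then invert $r_n^*$ on that subspace, which is precisely the transfer step you correctly flag as the main obstacle but attribute to Proposition \ref{Derighetti_Prop} alone.
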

	\begin{proof}
		The proof of the forward part is a direct consequence of Lemma \ref{DSInjThmFP}. So we need to prove only the converse part. But, by using Proposition \ref{Derighetti_Prop} and Theorem \ref{Inj_Thm_Sp_Syn}, one can repeat the proof of Th\'eor\`eme 12 of \cite{Der1} and hence the details are omitted.
	\end{proof}
	We now prove the injection theorem without the assumption of normality on the subgroups. To begin with, here is a simple lemma.
	\begin{lem}\label{SpeLemma}
		Let $H$ be a closed subgroup of an amenable locally compact group $G.$ Given $u\in A^n(G)$ and $\epsilon>0,$ there exists $v\in A^n(G)$ such that $v$ vanishes in a neighbourhood of $H^n$ and $\|u-uv\|\leq 2\|u|_{H^n}\|+\epsilon.$
	\end{lem}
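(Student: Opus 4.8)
The plan is to write $u$ as a sum of a function carrying the restriction $u|_{H^n}$ and a function in $I_{A^n(G)}(H^n)$, and then to multiply by a suitable element $v$ of $j_{A^n(G)}(H^n)$ --- which automatically vanishes in a neighbourhood of $H^n$ --- drawn from an approximate identity of $I_{A^n(G)}(H^n)$. First I would use $(iii)$ of Lemma \ref{FunctProp} to choose $w\in A^n(G)$ with $w|_{H^n}=u|_{H^n}$ and $\|w\|_{A^n(G)}=\|u|_{H^n}\|_{A^n(H)}$, and set $u_1=u-w$; since $u_1|_{H^n}\equiv 0$, we have $u_1\in I_{A^n(G)}(H^n)$.

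Next I would invoke two properties of this ideal. By Corollary \ref{SubGp_Lemma_Spectral}, $H^n$ is a set of synthesis for $A^n(G)$, so $j_{A^n(G)}(H^n)$ is dense in $I_{A^n(G)}(H^n)$; and since $G$ is amenable, Corollary \ref{BAIId} provides a bounded approximate identity for $I_{A^n(G)}(H^n)$. Moreover, tracing the bound through Theorem \ref{IBAI} (whose proof preserves it) and the Forrest--Kaniuth--Lau--Spronk input used in Corollary \ref{BAIId}, this approximate identity may be taken of bound $1$, since it is attached to the closed subgroup $H^n$. Approximating each term from the dense subideal $j_{A^n(G)}(H^n)$, I obtain an approximate identity $\{v_\alpha\}\subseteq j_{A^n(G)}(H^n)$ for $I_{A^n(G)}(H^n)$ with $\limsup_\alpha\|v_\alpha\|_{A^n(G)}\le 1$. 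Each $v_\alpha$ has compact support disjoint from $H^n$, hence vanishes on the open set $G^n\setminus supp(v_\alpha)$, which is a neighbourhood of $H^n$.

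Finally, using that $A^n(G)$ is a Banach algebra, I would fix $\alpha$ with $\|u_1-u_1v_\alpha\|_{A^n(G)}<\frac{\epsilon}{2}$ and $\|v_\alpha\|_{A^n(G)}<1+\frac{\epsilon}{2(1+\|u|_{H^n}\|_{A^n(H)})}$, and put $v=v_\alpha$. Then
\begin{align*}
\|u-uv\|_{A^n(G)} &\le \|w-wv\|_{A^n(G)}+\|u_1-u_1v\|_{A^n(G)} \\
&\le \|w\|_{A^n(G)}\bigl(1+\|v\|_{A^n(G)}\bigr)+\frac{\epsilon}{2} \\
&< 2\,\|u|_{H^n}\|_{A^n(H)}+\epsilon,
\end{align*}
and $v$ vanishes in a neighbourhood of $H^n$, as required.

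The step I expect to be the main obstacle is the quantitative one: getting the constant $2$, rather than $1+c$ for some possibly larger approximate-identity bound $c$. This requires keeping careful track of the bound through the chain Forrest--Kaniuth--Lau--Spronk $\to$ Theorem \ref{IBAI} $\to$ Corollary \ref{BAIId}, and verifying that for a closed subgroup of an amenable group the relevant ideal admits an approximate identity of bound $1$ (if only a bound $c$ is available, the statement should be read with $2$ replaced by $1+c$). The only other point needing care is the claim that the approximate identity can be chosen inside $j_{A^n(G)}(H^n)$, so that its elements literally vanish near $H^n$; this is precisely where the synthesis of $H^n$ (Corollary \ref{SubGp_Lemma_Spectral}) is used.
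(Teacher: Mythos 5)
Your argument is correct and essentially identical to the paper's: the paper likewise splits $u$ against the ideal $I_{A^n(G)}(H^n)$ (using item $(iv)$ of Lemma \ref{FunctProp}, which is interchangeable with your use of $(iii)$, to get a $w$ with $\|u-w\|\leq\|u|_{H^n}\|+\epsilon/4$) and then multiplies by a norm-one element of $j_{A^n(G)}(H^n)$ obtained from the bounded approximate identity of Corollary \ref{BAIId} together with the synthesis of $H^n$ from Corollary \ref{SubGp_Lemma_Spectral}. The quantitative point you flag --- that the approximate identity can be taken with bound $1$, which is what yields the constant $2$ --- is asserted in the paper's proof with no more justification than you give, so your caveat is fair but does not mark a divergence from the paper's route.
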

	\begin{proof}
		Let $u\in A^n(G).$ By $(iv)$ of Lemma \ref{FunctProp} there exists $w\in I_{A^n(G)}(H^n)$ such that $$\|u-w\|_{A^n(G)}\leq \|u|_{H^n}\|_{A^n(H)}+\epsilon/4.$$ By Corollary \ref{SubGp_Lemma_Spectral}, $H^n$ is a set of spectral synthesis and also as $G$ is amenable, by Corollary \ref{BAIId}, the ideal $I_{A^n(G)}(H^n)$ has an approximate identity bounded by 1. In particular, there exists $v\in j_{A^n(G)}(H^n)$ such that $\|v\|\leq 1$ and $$\|w-vw\|\leq \epsilon/4.$$ This $v$ satisfies the requirements of the Lemma.
	\end{proof}
	Here is the injection theorem without any assumption on the subgroup. But the assumption is on $G,$ we assume that the group is amenable.
	\begin{thm}[Injection theorem for Ditkin sets]\label{Inj_Thm_Ditkin_2}
		Let $H$ be a closed subgroup of an amenable locally compact group $G$ and let $E$ be a closed subset of $H^n.$ Then $E$ is a Ditkin set for $A^n(G)$ if and only if $E$ is a Ditkin set for $A^n(H).$
	\end{thm}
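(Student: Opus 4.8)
The plan is to mimic the structure of the injection theorem for spectral synthesis (Theorem~\ref{Inj_Thm_Sp_Syn}) and the normal-subgroup Ditkin injection theorem (Theorem~\ref{Inj_Thm_Ditkin_1}), but to replace the role played by Proposition~\ref{Derighetti_Prop} — which requires normality — by Lemma~\ref{SpeLemma}, which only needs amenability of $G$. The forward implication is already handled: since $E\subseteq H^n\subseteq G^n$, Lemma~\ref{DSInjThmFP} (applied with $X=VN^n(G)$, so that $X_H=VN^n(H)$) shows that if $E$ is a Ditkin set for $A^n(G)$ then $E$ is a Ditkin set for $A^n(H)$. So the whole burden is the converse.

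For the converse, suppose $E$ is a Ditkin set for $A^n(H)$. Let $u\in I_{A^n(G)}(E)$ and $\varepsilon>0$; I want to produce $v\in j_{A^n(G)}(E)$ (or at least $v$ vanishing in a neighbourhood of $E$, with uniformly bounded norm if we are after the strong version — but the statement only claims ordinary Ditkin, so a single $v$ with $\|u-uv\|<\varepsilon$ suffices) with $v$ vanishing near $E$. The key decomposition is $E\subseteq H^n$: first use Lemma~\ref{SpeLemma} to find $v_0\in A^n(G)$ vanishing in a neighbourhood $N$ of $H^n$ with $\|u-uv_0\|\le 2\|u|_{H^n}\|_{A^n(H)}+\varepsilon/3$. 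Since $u\in I_{A^n(G)}(E)$ and $E\subseteq H^n$, the restriction $u|_{H^n}$ lies in $I_{A^n(H)}(E)$. Because $E$ is Ditkin for $A^n(H)$, there is $v_1\in j_{A^n(H)}(E)$ — vanishing in an open set $\widetilde U\subseteq H^n$ containing $E$ — with $\big\|u|_{H^n}-(u|_{H^n})v_1\big\|_{A^n(H)}$ small; extend $v_1$ to $\widetilde v_1\in A^n(G)$ via $(iii)$ of Lemma~\ref{FunctProp}. The product $v_0+(1-v_0)\,\widetilde v_1$ (or a suitably chosen combination) then vanishes on a neighbourhood of $E$: on $N$ it equals $v_0$ plus something small, and near $E$ inside $H^n$ it is controlled by $v_1$; a standard regularity/partition argument glues these. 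Running the triangle inequality through $u - u\big(v_0 + (1-v_0)\widetilde v_1\big)$ and using that $\|u-uv_0\|$ is essentially $2\|u|_{H^n}\|$, which in turn we can drive down because the Ditkin property on $H^n$ lets us first subtract off most of $u|_{H^n}$, should close the estimate. In other words, one alternates: peel off the part of $u$ living near $H^n$ using Lemma~\ref{SpeLemma}, then on $H^n$ itself invoke the Ditkin hypothesis, then extend back up.

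The point where care is required — and what I expect to be the main obstacle — is controlling the term $2\|u|_{H^n}\|_{A^n(H)}$ coming out of Lemma~\ref{SpeLemma}. Unlike Proposition~\ref{Derighetti_Prop}, which gives the sharper bound $\varepsilon+\|u|_{H^n}\|$ and therefore plugs straight into Derighetti's iteration, Lemma~\ref{SpeLemma} only gives factor $2$; so a single application does not immediately make $\|u-uv\|$ small when $u\notin I_{A^n(G)}(H^n)$. The remedy is that we are not applying it to an arbitrary $u$ but, after the first Ditkin step on $H^n$, to an element whose restriction to $H^n$ is already norm-small — i.e., one iterates the two moves and sums a geometric-type series, exactly as in the classical Reiter/Derighetti proof of the injection theorem for Ditkin sets; amenability guarantees the bounded approximate identity in $I_{A^n(G)}(H^n)$ (Corollary~\ref{BAIId}) that makes the factor $2$ harmless in the limit. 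So the proof is: reduce to the converse; set up the iteration; at each stage apply Lemma~\ref{SpeLemma} to move the mass onto $H^n$ and the Ditkin hypothesis on $A^n(H)$ (together with the extension in Lemma~\ref{FunctProp}$(iii)$) to shrink the $H^n$-restriction; and check the errors telescope. Since this is precisely the scheme of Th\'eor\`eme~12 of \cite{Der1} with Proposition~\ref{Derighetti_Prop} replaced by Lemma~\ref{SpeLemma} and Corollary~\ref{BAIId}, the remaining bookkeeping is routine and can be summarized rather than written out in full.
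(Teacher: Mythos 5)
Your proposal matches the paper's own proof: the forward direction is exactly Lemma \ref{DSInjThmFP}, and for the converse the paper likewise states that one runs the argument of Th\'eor\`eme 12 of \cite{Der1} with Lemma \ref{SpeLemma} (whose factor $2$ is absorbed by the iteration, amenability supplying the bounded approximate identity of Corollary \ref{BAIId}) in place of Proposition \ref{Derighetti_Prop}. The paper omits the bookkeeping just as you do, so your sketch is at the same level of detail and follows the same route.
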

	\begin{proof}
		The forward part is a special case of Lemma \ref{DSInjThmFP}. The backward is almost the same as in Th\'eor\`eme 12 of \cite{Der1}, once we use Lemma \ref{SpeLemma} and hence we omit it.
	\end{proof}
	
	\subsection{Inverse projection theorem}
	
	In this final part, we prove the inverse projection theorem. We shall begin with a lemma which gives one part without any assumptions on $G.$
	
	\begin{lem}\label{IPTConv}
		Let $H$ be a closed normal subgroup of a locally compact group $G$ and let $\widetilde{E}$ be a closed subset of $(G/H)^n.$ Then 
		\begin{enumerate}[(i)]
			\item $\widetilde{E}$ is a set of synthesis for $A^n(G/H)$ if $q_n^{-1}(\widetilde{E})$ is a set of spectral synthesis for $A^n(G).$
			\item $\widetilde{E}$ is a strong Ditkin set for $A^n(G/H)$ if $q_n^{-1}(\widetilde{E})$ is a strong Ditkin set for $A^n(G).$
		\end{enumerate}
	\end{lem}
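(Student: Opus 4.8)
The plan is to derive both parts from the complete isometric embedding $\iota\colon A^n(G/H)\hookrightarrow A^n(G)$, $\iota(v)=v\circ q_n$, furnished by Lemma \ref{FunctProp}(v), together with the synthesis (resp. strong Ditkin) hypothesis on $E:=q_n^{-1}(\widetilde E)$, which is a closed, $H^n$-invariant subset of $G^n$. The argument follows the pattern of Derighetti's treatment of the nonabelian inverse projection theorem in \cite{Der1}. First I would record the elementary compatibilities: since $q_n$ is a continuous open surjection with $q_n(E)=\widetilde E$, for $v\in A^n(G/H)$ one has $\iota(v)|_E\equiv0$ iff $v|_{\widetilde E}\equiv0$, while for $v\in A^n(G)$ with compact support one has $\operatorname{supp}v\cap E=\emptyset$ iff $q_n(\operatorname{supp}v)$ is a compact subset of $(G/H)^n\setminus\widetilde E$. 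In particular $\iota^{-1}\bigl(I_{A^n(G)}(E)\bigr)=I_{A^n(G/H)}(\widetilde E)$, and $\iota$ is an isometric homomorphism, so $\|\iota(\widetilde u)\|_{A^n(G)}=\|\widetilde u\|_{A^n(G/H)}$ for all $\widetilde u$.

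Granting these, part (i) reduces to the inclusion $\iota^{-1}\bigl(J_{A^n(G)}(E)\bigr)\subseteq J_{A^n(G/H)}(\widetilde E)$: indeed, using that $E$ is a set of synthesis,
\[
I_{A^n(G/H)}(\widetilde E)=\iota^{-1}\bigl(I_{A^n(G)}(E)\bigr)=\iota^{-1}\bigl(J_{A^n(G)}(E)\bigr)\subseteq J_{A^n(G/H)}(\widetilde E),
\]
so $\widetilde E$ is a set of synthesis for $A^n(G/H)$. This inclusion I would establish by a localization argument. Given $\widetilde u\in I_{A^n(G/H)}(\widetilde E)$, the element $\iota(\widetilde u)$ lies in $I_{A^n(G)}(E)=\overline{j_{A^n(G)}(E)}$, so it is approximated in $A^n(G)$ by some $v\in j_{A^n(G)}(E)$; then $q_n(\operatorname{supp}v)$ is a compact subset of $(G/H)^n\setminus\widetilde E$. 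Using the regularity of $A^n(G/H)$ together with the cut-off construction of Proposition \ref{Derighetti_Prop}, one chooses $\widetilde w\in A^n(G/H)$ equal to $1$ on a neighbourhood of $q_n(\operatorname{supp}v)$, with $\operatorname{supp}\widetilde w$ compact and disjoint from $\widetilde E$, and of controlled norm. Since $(\widetilde w\circ q_n)v=v$, the function $\widetilde w\widetilde u$ lies in $j_{A^n(G/H)}(\widetilde E)$ and
$$\|\widetilde u-\widetilde w\widetilde u\|_{A^n(G/H)}=\|\iota(\widetilde u)-(\widetilde w\circ q_n)\,\iota(\widetilde u)\|_{A^n(G)}\le\bigl(1+\|\widetilde w\|_{A^n(G/H)}\bigr)\,\|\iota(\widetilde u)-v\|_{A^n(G)},$$
which is made small by letting $v\to\iota(\widetilde u)$ after a preliminary reduction (via the regularity of $A^n(G/H)$, confining $\widetilde u$ and hence $q_n(\operatorname{supp}v)$ to a fixed compact region) that keeps $\|\widetilde w\|$ bounded; hence $\widetilde u\in J_{A^n(G/H)}(\widetilde E)$.

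For part (ii) the same scheme applies, the new feature being that the cut-offs must be produced uniformly in $\widetilde u$ from a single bounded sequence. Starting from the bounded sequence $(w_k)\subseteq j_{A^n(G)}(E)$ witnessing that $E$ is a strong Ditkin set — which is in particular a bounded approximate identity for $I_{A^n(G)}(E)$ — I would push it down through the localization above to a bounded sequence $(\widetilde w_k)\subseteq j_{A^n(G/H)}(\widetilde E)$ and verify that $\widetilde w_k\widetilde u\to\widetilde u$ for every $\widetilde u\in I_{A^n(G/H)}(\widetilde E)$.

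I expect the main obstacle to be precisely this uniform norm control of the transferred cut-offs. Unlike in the case of a subgroup there is no canonical ``descent'' map $A^n(G)\to A^n(G/H)$ available, so the regularity of $A^n(G/H)$, the synthesis (resp. strong Ditkin) hypothesis on $E$, and the construction in Proposition \ref{Derighetti_Prop} have to be combined carefully to keep the cut-offs bounded and, for (ii), independent of $\widetilde u$; this is exactly the delicate point in the corresponding step of \cite{Der1}, and the proof should be organized to mirror it.
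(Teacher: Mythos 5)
Your reduction is the right one---everything comes down to the inclusion $\iota^{-1}\bigl(J_{A^n(G)}(q_n^{-1}(\widetilde E))\bigr)\subseteq J_{A^n(G/H)}(\widetilde E)$---but the localization argument you offer for that inclusion has a genuine gap, and you have talked yourself out of the tool that closes it. You assert that there is no canonical descent map $A^n(G)\to A^n(G/H)$; in fact there is one, namely the averaging map $P_{H^n}u(\dot x_1,\ldots,\dot x_n)=\int_{H^n}u(x_1h_1,\ldots,x_nh_n)\,dh_1\cdots dh_n$, a contractive $A^n(G/H)$-module map which is a left inverse to $\iota$ and which carries $j_{A^n(G)}(q_n^{-1}(\widetilde E))$ into $j_{A^n(G/H)}(\widetilde E)$ (the image under $q_n$ of a compact set missing $q_n^{-1}(\widetilde E)$ is a compact set missing $\widetilde E$). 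This is exactly what the paper uses (the operator $P_{H^n}$ appearing in its proof of part (ii)): apply $P_{H^n}$ to an approximating net from $j_{A^n(G)}(q_n^{-1}(\widetilde E))$ for part (i), and to the bounded sequence witnessing the strong Ditkin property for part (ii), and both statements drop out with no localization at all.

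The gap in your substitute argument is the uniform bound on $\|\widetilde w\|$. Your estimate reads $\|\widetilde u-\widetilde w\widetilde u\|\le(1+\|\widetilde w\|)\,\|\iota(\widetilde u)-v\|$, where $\widetilde w$ must equal $1$ on $q_n(\operatorname{supp}v)$ while vanishing on a neighbourhood of $\widetilde E$. As $v\to\iota(\widetilde u)$ within $j_{A^n(G)}(q_n^{-1}(\widetilde E))$, the compact sets $q_n(\operatorname{supp}v)$ are not confined to any fixed region and in general creep arbitrarily close to $\widetilde E$, so the norms of the required cut-offs are unbounded; regularity produces each individual $\widetilde w$ but gives no control on $\|\widetilde w\|$, and the ``preliminary reduction'' you invoke (confining $\widetilde u$ to a fixed compact region) is not available for a general $\widetilde u\in I_{A^n(G/H)}(\widetilde E)$ without a Tauberian hypothesis that the lemma does not assume. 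Part (ii) inherits the same defect, since your bounded sequence in $j_{A^n(G/H)}(\widetilde E)$ is manufactured by the same unproved localization. Replacing the localization by the averaging map $P_{H^n}$ turns both parts into short, direct verifications.
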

	\begin{proof}
		$(i)$ Suppose that $q_n^{-1}(\widetilde{E})$ is a set of synthesis for $A^n(G).$ Let $\widetilde{u}\in I_{A^n(G/H)}(\widetilde{E})$ and $u=\widetilde{u}\circ q_n$. Obsevre that $u \in I_{A^n(G)}(q_n^{-1}(\widetilde{E})).$ Since $q_n^{-1}(\widetilde{E})$ is a set of spectral synthesis with respect to $A^n(G)$, $u \in J_{A^n(G)}(q_n^{-1}(\widetilde{E}))$ and hence $\widetilde{u}\in J_{A^n(G/H)}(\widetilde{E})$, thereby proving that $\widetilde{E}$ is a set of spectral synthesis for $A^n(G/H)$.
		
		$(ii)$ Suppose that $q_n^{-1}(\widetilde{E})$ is a strong Ditkin set for $A^n(G)$. Let $\widetilde{u} \in I_{A^n(G/H)}(\widetilde{E}).$ Then $u =\widetilde{u} \circ q_n \in I_{A^n(G)}(q_n^{-1}(\widetilde{E})).$ Since $q_n^{-1}(\widetilde{E})$ is a strong Ditkin set, there exists a bounded sequence $\{u_n\} \subseteq j_{A^n(G)}(q_n^{-1}(\widetilde{E}))$ such that $$\| u_nu-u\| \rightarrow 0.$$ Observe that $P_{H^n}(u_n) \in j_{A^n(G/H)}(\widetilde{E}))$ and therefore $$ \| P_{H^n}(u_n)\widetilde{u}_n - \widetilde{u}_n \| \rightarrow 0, $$ thereby showing that $\widetilde{E}$ is a strong Ditkin set for $A^n(G/H)$.
	\end{proof}
	Here is the complete inverse projection theorem under the assumption that the group is compact.
	\begin{thm}[Inverse projection theorem]\label{IPT}
		Let $H$ be a closed normal subgroup of a compact group $G$ and let $\widetilde{E}$ be a closed subset of $(G/H)^n.$ Then 
		\begin{enumerate}[(i)]
			\item $\widetilde{E}$ is a set of synthesis for $A^n(G/H)$ if and only if $q_n^{-1}(\widetilde{E})$ is a set of spectral synthesis for $A^n(G).$
			\item $\widetilde{E}$ is a strong Ditkin set for $A^n(G/H)$ if and only if $q_n^{-1}(\widetilde{E})$ is a strong Ditkin set for $A^n(G).$
		\end{enumerate}
	\end{thm}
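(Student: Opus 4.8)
The plan is to prove both equivalences by combining Lemma~\ref{IPTConv}, which already gives the ``if'' directions without any hypothesis on $G$, with the injection theorem for the normal subgroup $H\times H\times\cdots\times H$ viewed as a subgroup of $G^n$, together with the compactness of $G$ to pass from the quotient back to the group. The compactness of $G$ will be used in two essential ways: first, $A^n(G)$ has an identity (since $G$ is compact, $A(G)$ is unital and hence so is $\otimes_n^{eh}A(G)$), so every set of spectral synthesis is automatically a set of local synthesis and conversely; second, for compact $G$ the averaging operator $P_{H^n}\colon A^n(G)\to A^n(G/H)$ (integration over the $H$-cosets in each variable) is a norm-one surjective algebra homomorphism with a completely isometric section, namely $\widetilde u\mapsto \widetilde u\circ q_n$, by $(v)$ of Lemma~\ref{FunctProp}. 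These two facts turn the general ``only if'' obstruction into something manageable.

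For part $(i)$, the forward direction I would argue as follows. Assume $\widetilde E$ is a set of synthesis for $A^n(G/H)$. Put $E = q_n^{-1}(\widetilde E)$, a closed subset of $G^n$ containing $H^n$. I want to show $I_{A^n(G)}(E) = J_{A^n(G)}(E)$. Let $u\in I_{A^n(G)}(E)$ and $\epsilon>0$. By the injection theorem (Theorem~\ref{Inj_Thm_Sp_Syn} applied with $X = VN^n(G)$ and the subgroup $H^n\subseteq G^n$, noting $H^n$ is a set of synthesis by Corollary~\ref{SubGp_Lemma_Spectral}), or more directly by Corollary~\ref{BAIId} combined with compactness, I reduce to the case where $u$ already vanishes on a neighbourhood of $H^n$; averaging by $P_{H^n}$ then yields $\widetilde u = P_{H^n}(u)\in I_{A^n(G/H)}(\widetilde E)$ with $u = \widetilde u\circ q_n$ up to a small error. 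Since $\widetilde E$ is synthetic for $A^n(G/H)$, there is $\widetilde v\in j_{A^n(G/H)}(\widetilde E)$ close to $\widetilde u$; then $\widetilde v\circ q_n\in j_{A^n(G)}(E)$ is close to $u$. Hence $u\in J_{A^n(G)}(E)$. The key technical point is controlling the ``decompose along $H^n$'' step, for which I will invoke Theorem~\ref{SubGp_Lemma_Ditkin} ($H^n$ is a locally Ditkin set, $G$ compact so local synthesis equals synthesis) to write an arbitrary $u\in I_{A^n(G)}(E)$ as a norm limit of products $u w$ with $w$ vanishing near $H^n$, so that $uw\in I_{A^n(G)}(E)$ is also of the reduced form after a further synthesis step for $\widetilde E$ pulled back through $q_n$.

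For part $(ii)$, the forward direction is cleaner because strong Ditkin sets behave well under the homomorphism $q_n^\ast$. Assume $\widetilde E$ is a strong Ditkin set for $A^n(G/H)$ with bounded approximating sequence $\{\widetilde u_k\}\subseteq j_{A^n(G/H)}(\widetilde E)$, $\|\widetilde u_k\|\le c$, working for all of $I_{A^n(G/H)}(\widetilde E)$. Set $E = q_n^{-1}(\widetilde E)$ and $w_k = \widetilde u_k\circ q_n\in j_{A^n(G)}(E)$, still bounded by $c$. Given $u\in I_{A^n(G)}(E)$, first use that $H^n$ is strong Ditkin (this is where I need $G$ compact and amenable: Corollary~\ref{BAIId} gives a bounded approximate identity in $I_{A^n(G)}(H^n)$, and compactness upgrades this to a genuine strong Ditkin approximating sequence for $H^n$) to approximate $u$ by $u\,e_j$ with $e_j$ vanishing near $H^n$; then $P_{H^n}(u e_j)\circ q_n$ approximates $u e_j$, and since $P_{H^n}(u e_j)\in I_{A^n(G/H)}(\widetilde E)$ we get $w_k\cdot(u e_j) \to u e_j$ and, combining, a bounded sequence in $j_{A^n(G)}(E)$ approximating $u$. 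Assembling these with a diagonal argument produces the single bounded approximating sequence required, so $E$ is strong Ditkin for $A^n(G)$.

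The main obstacle I anticipate is the reduction step in $(i)$: one must pass from a general element of $I_{A^n(G)}(q_n^{-1}(\widetilde E))$ to one that is the pullback of an element of $I_{A^n(G/H)}(\widetilde E)$, and this is exactly where the interplay between the two synthesis hypotheses ($H^n$ synthetic/Ditkin in $G^n$, and $\widetilde E$ synthetic in $(G/H)^n$) has to be orchestrated carefully; the averaging operator $P_{H^n}$ is contractive but not injective on $A^n(G)$, so one cannot simply ``push down'' an arbitrary $u$. Once compactness is invoked to identify local synthesis with synthesis and to supply the strong-Ditkin approximating sequence for $H^n$, the rest is bookkeeping with $\epsilon$'s, and I would present the $(ii)$ argument in full and remark that $(i)$ follows by the same scheme with the bounded sequences replaced by mere norm approximation.
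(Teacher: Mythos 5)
Your reduction step in the forward directions contains a genuine gap, and it sits exactly at the point you yourself flag as the main obstacle. A preliminary slip: $q_n^{-1}(\widetilde E)$ is a union of cosets of $H^n$ but contains $H^n$ itself only when the identity coset lies in $\widetilde E$, so the subgroup lemma and the Ditkin property of $H^n$ are not available in the way you invoke them. The serious problem is the claim that, after arranging for $u$ to vanish near $H^n$, one has $u=P_{H^n}(u)\circ q_n$ up to a small error. The averaging $P_{H^n}$ is a contractive projection (a conditional expectation) onto the proper closed subalgebra $q_n^{*}(A^n(G/H))$ of $H^n$-invariant functions; it annihilates every nontrivial isotypic component of $u$ along $H^n$, and it is not an algebra homomorphism (for $G=\mathbb T$, $P(e^{is})P(e^{-is})=0\neq 1=P(e^{is}e^{-is})$). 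Concretely, take $G=\mathbb T^2$, $H=\mathbb T\times\{0\}$, $n=1$, $\widetilde E=\{\dot 0\}$ and $u(s,t)=e^{is}v(t)$ with $v\in A(\mathbb T)$ vanishing near $0$: then $u\in I_{A(G)}(q^{-1}(\widetilde E))$ vanishes on a neighbourhood of $H$, yet $P_H(u)=0$ while $\|u\|_{A(G)}=\|v\|_{A(\mathbb T)}>0$. Since $I_{A^n(G)}(q_n^{-1}(\widetilde E))$ is not contained in the closure of $q_n^{*}(A^n(G/H))$, no amount of multiplying by functions vanishing near $H^n$ repairs this, and the same defect undermines the step $P_{H^n}(ue_j)\circ q_n\approx ue_j$ in your argument for part (ii).

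What the paper actually does (following Spronk--Turowska) is to retain all isotypic components by twisting before averaging: for each irreducible $\pi=\otimes_i\pi^i$ of $G^n$ it forms $v^{\pi}(x_1,\ldots,x_n)=\int_{H^n}\bigl(\otimes_i\pi^i(h_i)\bigr)v(x_1h_1,\ldots,x_nh_n)\,dh$ and then $\widetilde v^{\pi}(x)=\bigl(\otimes_i\pi^i(x_i)\bigr)v^{\pi}(x)$, which \emph{is} right $H^n$-invariant and hence descends to $I_{A^n(G/H)}(\widetilde E)$. Synthesis (resp.\ the strong Ditkin sequence) on the quotient places its matrix entries in $J_{A^n(G/H)}(\widetilde E)\subseteq J_{A^n(G)}(q_n^{-1}(\widetilde E))$; the twist is undone by multiplying by matrix coefficients of $\check\pi$, which is harmless because $J$ is an ideal; and $v$ is recovered as a limit of finite linear combinations $e_\alpha\cdot v$ of these entries using a bounded approximate identity of $L^1(G^n)$. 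Your $P_{H^n}$ is precisely the $\pi=1$ term of this scheme; the theorem needs all of them.
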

	\begin{proof}
		$(i)$ We shall only prove the forward part, as the backward part follows from Lemma \ref{IPTConv}. So, let $v \in I_{A^n(G)}(q_n^{-1}(\widetilde{E})).$ For each irreducible unitary representation $\pi = \underset{1}{\overset{n}{\otimes}}\pi^i$ of $G^n,$ consider the matrix-valued functions $v^{\pi}$ and $\widetilde{v}^{\pi}$ given by, $$v^{\pi}(x_1,x_2,...,x_n) := \underset{H^n}{\int} (\underset{1}{\overset{n}{\otimes}}\pi^i(h_i)) v(x_1 h_1,...,x_n h_n) dh_1...dh_n,$$ $$\widetilde{v}^{\pi}(x_1,x_2,...,x_n) := \underset{1}{\overset{n}{\otimes}} \pi^i(x_i)v^{\pi}(x_1,...x_n).$$ Observe that $\widetilde{v}^{\pi}$ satisfies $$ \widetilde{v}^{\pi}(x_1h_1,...,x_nh_n) = \widetilde{v}^{\pi}(x_1,...,x_n), \forall (x_1,...,x_n) \in G^n, (h_1,...,h_n) \in H^n,$$ i.e., $\widetilde{v}^{\pi}$ is invariant under translation by elements of $H^n.$ Note that the matrix coefficients of $\pi$ are of the form $$ \underset{1}{\overset{n}{\otimes}}\pi_{k_i,l_i}(x_1,...x_n) =\underset{1}{\overset{n}{\prod}}\pi^i_{k_i,l_i}(x_i), 1\leq k_i,l_i \leq d_{\pi^i}.$$         Therefore, $$ \underset{1}{\overset{n}{\otimes}} v^{\pi^i}_{k_i,l_i}(x_1,...x_n) = \underset{H^n}{\int}( \underset{1}{\overset{n}{\otimes}}\pi_{k_i,l_i}(x_1,...x_n)) v(x_1h_1,..., x_nh_n)dh_1...dh_n. $$ Hence
		\begin{eqnarray*}
			\underset{1}{\overset{n}{\otimes}} \widetilde{v}^{\pi^i}_{k_i,l_i}(x_1,...x_n) &=& \left(\underset{1}{\overset{n}{\prod}}\pi^i_{k_i,l_i}(x_i)\right) \left(\underset{1}{\overset{n}{\otimes}} v^{\pi^i}_{k_i,l_i}\right)(x_1,...x_n)\\ &=& \left(\underset{i=1}{\overset{n}{\prod}} \underset{m_i=1} {\overset{d_{\pi^i}}\sum}\pi^i_{k_i,m_i}(x_i)  v_{m_i,l_i}^{1 \otimes...\otimes \pi_k^i \otimes... \otimes 1}\right) \left(x_1,...x_n\right)
		\end{eqnarray*}
		Also, as $\underset{1}{\overset{n}{\otimes}} \widetilde{v}^{\pi^i}_{k_i,l_i}$ can be considered as a function in $A^n(G/H)$, $$\underset{1}{\overset{n}{\otimes}}  \widetilde{v}^{\pi^i}_{k_i,l_i} \in I_{A^n(G/H)}(\widetilde{E}).$$ Since $\widetilde{E}$ is a set of spectral synthesis for $A^n(G/H)$, $\underset{1}{\overset{n}{\otimes}} \widetilde{v}^{\pi^i}_{k_i,l_i} \in J_{A^n(G/H)}(\widetilde{E})\subseteq J_{A^n(G)}(q_n^{-1}(\widetilde{E})).$ Now, observe that $$\underset{1}{\overset{n}{\otimes}} v^{\pi^i}_{k_i,l_i}=\left(\underset{1}{\overset{n}{\prod}}\check{\pi}^i_{k_i,l_i}(x_i)\right) \left(\underset{1}{\overset{n}{\otimes}} \widetilde{v}^{\pi^i}_{k_i,l_i}\right),$$ which implies that $\underset{1}{\overset{n}{\otimes}} v^{\pi^i}_{k_i,l_i} \in J_{A^n(G)}(q_n^{-1}(\widetilde{E})).$ Choose a bounded approximate identity $\{e_\alpha\}$ in $L^1(G^n)$ such that $$e_{\alpha}.v \in span\left\{ \underset{1}{\overset{n}{\otimes}} v^{\pi^i}_{k_i,l_i}: \pi^i \in \hat{G}, 1\leq i \leq n, 1\leq k_i,l_i \leq d_{\pi^i}\right\}.$$ Note that $e_{\alpha}.v \in J_{A^n(G)}(q_n^{-1}(\widetilde{E}).$ Also, $e_{\alpha}.v \rightarrow v$, which implies that $$v \in J_{A^n(G)}(q_n^{-1}(\widetilde{E})).$$ Hence the proof.
		
		$(ii)$ As mentioned above, we shall prove only the forward part. So, let us suppose that $\widetilde{E}$ is a strong Ditkin set for $A^n(G/H)$. Let $v \in I_{A^n(G)}(q_n^{-1}(\widetilde{E})).$ With notations as in the proof of $(i)$ $\underset{1}{\overset{n}{\otimes}} v^{\pi^i}_{k_i,l_i} \in I_{A^n(G)}(q_n^{-1}(\widetilde{E})).$ Since $\widetilde{E}$ is a strong Ditkin set for $A^n(G/H)$ there exists a bounded sequence $\{\widetilde{u}_n\} \subseteq j_{A^n(G/H)}(\widetilde{E})$ such that $$ \left\| \widetilde{u}_n\left(\underset{1}{\overset{n}{\otimes}} v^{\pi^i}_{k_i,l_i} \right) -\left(\underset{1}{\overset{n}{\otimes}} v^{\pi^i}_{k_i,l_i}\right)\right\|_{A^n(G/H)} \rightarrow 0.$$ Let $u_n = \widetilde{u}_n \circ q_n.$ Then $u_n \in A^n(G)$. Choose a bounded approximate identity $\{e_\alpha\}$ for $L^1(G^n)$ such that $ e_\alpha.v = \underset{n}{\lim}(u_n(e_\alpha.v))$ in $A^n(G).$
		
		Also, $e_\alpha.v \rightarrow v $ for all $v \in A^n(G)$. Let $\|u_n\| \leq c$ for all $n \in \mathbb{N}$. Thus, 
		\begin{eqnarray*}
			& & \|u_nv-v\|_{A^n(G)} \\ &\leq& \|u_nv-u_ne_\alpha.v\|_{A^n(G)}+ \|u_ne_\alpha.v-e_\alpha.v\|_{A^n(G)}+\|e_\alpha.v-v\|_{A^n(G)}\\ &\leq& (c+1)\|e_\alpha.v-v\|_{A^n(G)} + \|u_nv-u_ne_\alpha.v\|_{A^n(G)} 
		\end{eqnarray*}
		Given $\epsilon >0$, fix an $\alpha$ such that $$\| v-e_\alpha.v\|<\frac{\epsilon}{2(1+c)}.$$ For this $\alpha$, there exists $n_0$ such that $$ \| u_ne_\alpha.v-e_\alpha.v\| < \frac{\epsilon}{2} \hspace{1cm} \forall \hspace{0.15cm} n \geq n_0.$$ Thus $u_nv \rightarrow v$. Hence $q_n^{-1}(\widetilde{E})$ is a strong Ditkin set for $A^n(G)$.
	\end{proof}
	
	\section{Synthesis in $A^n(G)$ and $A^{n+1}(G)$}
	
	The main aim of this section is to prove the Malliavin's theorem. Our strategy is to make use of Malliavin's theorem that is already available for $A(G),$ when $G$ is abelian.
	
        \subsection{Spectral sets and Ditkin sets w.r.t. $A^n(G)$ and $A^{n+1}(G)$}
	We first prove a result on a parallel synthesis between $A^n(G)$ and $A^{n+1}(G).$ Here we assume that $G$ is compact.
	
	We shall begin with a notation.
	\begin{nota}
		We let $$ A^{n+1}_{inv}(G) = \left\{u \in A^{n+1}(G): \begin{array}{c} u(x_1,x_2,\ldots,x_{n+1})=u(x_1x,x_2x,\ldots,x_{n+1}x) \\ \forall\ x,x_1,x_2,\ldots,x_{n+1}\in G\end{array} \right\}.$$
	\end{nota}
	\begin{lem}
		The mapping $P: A^{n+1}(G) \rightarrow A^{n+1}(G)$ given by $$ Pu(x_1,x_2,...,x_{n+1}) = \underset{G}{\int} u(x_1x,x_2x,...,x_{n+1}x) dx$$ is a contractive projection onto $A^{n+1}_{inv}(G)$ and an $A^{n+1}_{inv}(G)$- module map.
	\end{lem}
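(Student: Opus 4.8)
The plan is to realise $P$ as the average over $G$ of the diagonal right‑translation operators and to transfer boundedness, the projection property and the module property through the tensor structure $A^{n+1}(G)=\otimes_{n+1}^{eh}A(G)$. First I would record that right translation is a complete isometry: for $x\in G$ set $R_xu(t)=u(tx)$ on $A(G)$; writing $u(t)=\langle\lambda_G(t)\xi,\eta\rangle$ gives $R_xu(t)=\langle\lambda_G(t)\lambda_G(x)\xi,\eta\rangle$, so $R_x$ is the preadjoint of right multiplication by the unitary $\lambda_G(x)$ on $VN(G)$, a normal complete isometry; hence $R_x$ is a complete isometry of $A(G)$ with inverse $R_{x^{-1}}$. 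By the functoriality of the extended Haagerup tensor product already used in Lemma~\ref{FunctProp} (cf.\ \cite[Pg.~144]{ER2}), the diagonal operator $\mathcal{R}_x:=R_x\otimes\cdots\otimes R_x$ ($n+1$ factors) and $\mathcal{R}_{x^{-1}}$ are complete contractions of $A^{n+1}(G)$, so $\mathcal{R}_x$ is a complete isometry acting by $\mathcal{R}_xu(x_1,\dots,x_{n+1})=u(x_1x,\dots,x_{n+1}x)$.

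\emph{Next} I would build the average. Normalising Haar measure (as $G$ is compact), the map $x\mapsto\mathcal{R}_xu$ is norm‑continuous: this is clear for $u$ in the dense subspace $\otimes_{n+1}A(G)$ since right translation is strongly continuous on $A(G)$, and a three‑$\varepsilon$ argument using $\|\mathcal{R}_x\|=1$ extends it to all of $A^{n+1}(G)$. Hence the Bochner integral $Pu:=\int_G\mathcal{R}_xu\,dx$ exists in $A^{n+1}(G)$ and $\|Pu\|\le\int_G\|\mathcal{R}_xu\|\,dx=\|u\|$, so $P$ is a contraction. Pairing with the contractive evaluation functional $u\mapsto u(x_1,\dots,x_{n+1})=\langle u,\lambda_G(x_1)\otimes\cdots\otimes\lambda_G(x_{n+1})\rangle$ and moving it inside the integral shows that $Pu$ agrees with the function in the statement.

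\emph{Then} I would verify the projection and module properties, all reducing to changes of variable. Since $G$ is unimodular, the substitution $x\mapsto y^{-1}x$ gives $\mathcal{R}_y(Pu)=Pu$ for all $y$, so $Pu\in A^{n+1}_{inv}(G)$; conversely, if $u\in A^{n+1}_{inv}(G)$ then $\mathcal{R}_xu=u$ for every $x$, whence $Pu=\int_Gu\,dx=u$. Therefore $P^2=P$ and the range of $P$ is exactly $A^{n+1}_{inv}(G)$ (which is closed, being $\bigcap_{x\in G}\ker(\mathcal{R}_x-I)$). For the module property, fix $v\in A^{n+1}_{inv}(G)$ and $u\in A^{n+1}(G)$; evaluating at $(x_1,\dots,x_{n+1})$ and using $v(x_1x,\dots,x_{n+1}x)=v(x_1,\dots,x_{n+1})$ yields $P(vu)(x_1,\dots,x_{n+1})=v(x_1,\dots,x_{n+1})\,Pu(x_1,\dots,x_{n+1})$, i.e.\ $P(vu)=v\,Pu$, using that $A^{n+1}(G)$ is a Banach algebra under pointwise product (Todorov–Turowska) whose elements are determined pointwise.

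\textbf{Main obstacle.} The only non‑formal points are in the first two steps: that diagonal right translation, a priori defined only on the algebraic tensor product, extends to a (completely) isometric operator on $A^{n+1}(G)$ — this is exactly where functoriality of $\otimes^{eh}$ is used — and the norm‑continuity of $x\mapsto\mathcal{R}_xu$ needed to legitimise the vector‑valued integral over compact $G$. I expect the first to be the point requiring the most care to state precisely; everything afterwards is a routine change of variables.
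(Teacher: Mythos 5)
Your proof is correct and takes essentially the same route as the paper: both realise $P$ as the average over the compact group $G$ of the diagonal right-translation isometries $\mathcal{R}_x$, the paper compressing the averaging into the observation that $P(u)=1\cdot u$ for the constant function $1\in L^1(G)$ while you construct the Bochner integral explicitly. The remaining verifications (range in $A^{n+1}_{inv}(G)$ by invariance of Haar measure, $P$ being the identity on $A^{n+1}_{inv}(G)$, and the module property) are the same routine changes of variable in both arguments.
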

	\begin{proof}
		Since $G$ is compact, for every $x \in G,$ the map $u\mapsto x.u$ is an isometry on $A^{n+1}(G).$ Thus, $\|P(u)\|_{A^{n+1}(G)} < \infty,$ i.e., the $P$ map is well defined. Now, the constant function $1$ belongs to $L^1(G)$ and it is clear that $P(u)=1\cdot u$ for all $u\in A^{n+1}(G).$ Hence the map $P$ is a contraction. Using the fact that the Haar measure is left invariant, range$(P)$ sits inside $A_{inv}^{n+1}(G).$ Now, from the definition of $A^{n+1}_{inv}(G),$ it is easy to verify that $P|_{A^{n+1}_{inv}(G)}$  is the identity mapping. Thus, $P$ is a contractive projection of $A^{n+1}(G)$ onto $A^{n+1}_{inv}(G).$ Finally, it is easy to observe from the definition of $P$ that it is an $A^{n+1}_{inv}(G)$-module map.
	\end{proof}
\begin{prop}\label{EquiCondUnitMDA}
		Let $u$ be any function on $G^n.$ Then TFAE:
		\begin{enumerate}[(i)]
			\item The element $u\in b_1(A^n(G));$
			\item There exists an operator $M_u\in b_1\left( \mathcal{B}^\sigma\left( \otimes_n^{\sigma h}\ VN(G), VN(G) \right) \right)$ such that $$M_u(\lambda(x_1)\otimes\cdots\otimes\lambda(x_n))=u(x_1,\ldots,x_n)\lambda(x_1\ldots x_n);$$
			\item There exists an operator $\widetilde{M_u}\in b_1\left( \mathcal{B}^\sigma\left( \otimes_n^h\ C^\ast(G), C^\ast(G) \right) \right)$ such that $$\widetilde{M_u}(\lambda(f_1)\otimes\lambda(f_2)\otimes\cdots\otimes\lambda(f_n))=\lambda(g),$$ where $$g(x)=\underset{G^{n-1}}{\int}f_1(x_1)f_2(x_1^{-1}x_2)\cdots f_n(x_{n-1}^{-1}x)u(x_1,x_1^{-1}x_2,\ldots,x_{n-1}^{-1}x)\ dx_1\ldots dx_{n-1}.$$
		\end{enumerate}
	\end{prop}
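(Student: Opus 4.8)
The plan is to establish $(i)\Rightarrow(ii)\Rightarrow(i)$ together with the equivalence $(ii)\Leftrightarrow(iii)$, tracking completely bounded norms throughout so that the unit‑ball formulation drops out (in effect one proves $\|u\|_{A^n(G)}=\|M_u\|_{cb}=\|\widetilde{M_u}\|_{cb}$). For $(i)\Rightarrow(ii)$ I would realise $M_u$ as a composite of two normal complete contractions. Recall that $A^n(G)$ is a completely contractive Banach algebra under pointwise multiplication (\cite{ToTu}) and that $VN^n(G)=(A^n(G))^\ast$ is a dual $A^n(G)$-module via $\langle u\cdot T,v\rangle=\langle T,uv\rangle$; since $\lambda(x_1)\otimes\cdots\otimes\lambda(x_n)$ pairs with $A^n(G)$ as evaluation at $(x_1,\ldots,x_n)$, this module action sends $\lambda(x_1)\otimes\cdots\otimes\lambda(x_n)$ to $u(x_1,\ldots,x_n)\,\lambda(x_1)\otimes\cdots\otimes\lambda(x_n)$. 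Hence, for $u\in b_1(A^n(G))$, the operator $L_u:T\mapsto u\cdot T$ on $VN^n(G)$ is a normal complete contraction, being the adjoint of $v\mapsto uv$ on $A^n(G)$. Composing with the iterated multiplication $m_n:\otimes_n^{\sigma h}VN(G)\to VN(G)$, $T_1\otimes\cdots\otimes T_n\mapsto T_1\cdots T_n$ — a normal complete contraction by the standard properties of the $\sigma$-Haagerup tensor product — one obtains $M_u:=m_n\circ L_u\in b_1(\mathcal{B}^\sigma(\otimes_n^{\sigma h}VN(G),VN(G)))$; evaluating on generators, using $\lambda(x_1)\cdots\lambda(x_n)=\lambda(x_1\cdots x_n)$, gives precisely the formula in (ii). Uniqueness of $M_u$ is automatic, since the elementary tensors $\lambda(x_1)\otimes\cdots\otimes\lambda(x_n)$ span a weak$^\ast$-dense subspace of $\otimes_n^{\sigma h}VN(G)$.

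For $(ii)\Rightarrow(i)$ I would use compactness of $G$: the constant function $1$ lies in $A(G)$ with $\|1\|_{A(G)}=1$ and $\langle\lambda(x),1\rangle=1$ for every $x\in G$, so as a normal functional on $VN(G)$ it has norm $1$. Then $\phi:=\langle M_u(\cdot),1\rangle$ is a normal functional on $VN^n(G)=(A^n(G))^\ast$ with $\|\phi\|=\|\phi\|_{cb}\le\|M_u\|_{cb}\le 1$, hence $\phi\in b_1(A^n(G))$ (using that $A^n(G)=\otimes_n^{eh}A(G)$ is the unique predual of $VN^n(G)$); and $\phi(\lambda(x_1)\otimes\cdots\otimes\lambda(x_n))=u(x_1,\ldots,x_n)\langle\lambda(x_1\cdots x_n),1\rangle=u(x_1,\ldots,x_n)$, so $\phi$ coincides with the function $u$. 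For $(ii)\Leftrightarrow(iii)$ I would use, again with $G$ compact, that $C^\ast(G)$ sits weak$^\ast$-densely inside $VN(G)$ via the regular representation, whence $\otimes_n^h C^\ast(G)$ is weak$^\ast$-dense in $\otimes_n^{\sigma h}VN(G)$; restriction and weak$^\ast$-continuous extension then give a $cb$-norm-preserving bijection between the operators in (ii) and the normal operators in (iii). It remains to identify the restriction: writing $\lambda(f_i)=\int_G f_i(x_i)\lambda(x_i)\,dx_i$ as a weak$^\ast$-integral and applying the formula from (ii),
$$M_u(\lambda(f_1)\otimes\cdots\otimes\lambda(f_n))=\int_{G^n}f_1(x_1)\cdots f_n(x_n)\,u(x_1,\ldots,x_n)\,\lambda(x_1x_2\cdots x_n)\,dx_1\cdots dx_n,$$
and the substitution $x_i\mapsto y_{i-1}^{-1}y_i$ (equivalently, integrating out the last variable against $\lambda(x)$, $x=x_1\cdots x_n$, and using left‑invariance of the Haar measure) rewrites the right‑hand side as $\int_G g(x)\,\lambda(x)\,dx=\lambda(g)$, with $g$ exactly the function displayed in (iii); this gives $(ii)\Rightarrow(iii)$, and the reverse extension gives $(iii)\Rightarrow(ii)$.

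The step I expect to carry the weight is the interface between the Fourier‑algebra and group‑algebra pictures: the structural facts that $A^n(G)$ is a completely contractive Banach algebra acting completely contractively on $VN^n(G)$, that $m_n$ is a normal complete contraction, and that $\otimes_n^h C^\ast(G)$ is weak$^\ast$-dense in $\otimes_n^{\sigma h}VN(G)$ with extension of maps preserving the $cb$-norm. The first two follow from \cite{ToTu} together with the standard properties of the (extended and $\sigma$-) Haagerup tensor products, and the third is the standard relationship between the Haagerup and $\sigma$-Haagerup tensor products recorded in \cite{ER1, ER2}. Granting these, the remaining content is the change‑of‑variables identification in $(ii)\Rightarrow(iii)$ and the one‑line vector‑functional argument in $(ii)\Rightarrow(i)$, neither of which uses anything about $G$ beyond its compactness (finiteness of Haar measure and $C^\ast(G)\subseteq VN(G)$).
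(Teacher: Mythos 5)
Your proof is correct and follows essentially the same route as the paper: your composite $M_u=m_n\circ L_u$ is exactly the Banach-space adjoint of the paper's map $v\mapsto u\,\theta(v)$ (where $\theta(v)(x_1,\ldots,x_n)=v(x_1\cdots x_n)$, whose adjoint is the multiplication map $m_n$), and your change-of-variables computation for the restriction to $\otimes_n^h C^\ast(G)$ is precisely what the paper cites to Todorov--Turowska for $(ii)\Rightarrow(iii)$. The only organizational difference is that you close the cycle by proving $(ii)\Rightarrow(i)$ directly via pairing with the constant function $1\in A(G)$, whereas the paper goes $(iii)\Rightarrow(i)$ by taking the adjoint of $\widetilde{M_u}$ --- which amounts to the same evaluation-against-$1$ argument carried out on the predual side.
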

	\begin{proof}
		$(i)\ \Rightarrow\ (ii).$ Define $\theta:A(G)\rightarrow A^n(G)$ as $$\theta(v)(x_1,x_2,\ldots,x_n):=v(x_1x_2\ldots x_n).$$ By \cite[Proposition 5.2]{ToTu},  the map $\theta$ is well defined and is a complete contraction. Define $m_u:A(G)\rightarrow A^n(G)$ as $m_u(v):=u\theta(v).$ Now, for any $v\in A^n(G)$, we have 
		\begin{eqnarray*}
			\|m_u(v)\|_{A^n(G)} &=& \|u\theta(v)\|_{A^n(G)} \leq \|u\|_{A^n(G)}\|\theta(v)\|_{A^n(G)} \\ &<& \|\theta(v)\|_{A^n(G)} \leq \|v\|_{A^n(G)},
		\end{eqnarray*}
		i.e., $\|m_u\|\leq 1$ or in other symbols $m_u\in b_1(\mathcal{B}(A(G),A^n(G))).$ Let $M_u$ denote the Banach space adjoint of $m_u.$ Then $M_u$ satisfies $(ii).$
	
		$(ii)\ \Rightarrow\ (iii).$ Let $\widetilde{M_u}=M_u|_{\otimes_n^h C^\ast(G)}.$ Then, by \cite[Pg. 19]{ToTu}, $(iii)$ follows.
			
		$(iii)\ \Rightarrow\ (i).$ By \cite{ToTu}, the spaces $A^n(G)$ and $\left( \otimes_n^h C^\ast(G) \right)^\ast$ are completely isometric to each other. Therefore, let $m_u$ be the Banach space adjoint of $\widetilde{M_u}.$ Then $m_u$ satisfies $(i).$
	\end{proof}
	\begin{prop}
	    The map $N:A^{n}(G) \rightarrow A^{n+1}_{inv}(G)$ given by $$Nu(x_1,x_2,...,x_{n+1})     = u(x_1x_2^{-1},x_2x_3^{-1},...,x_nx_{n+1}^{-1})$$ is an isometry.
	\end{prop}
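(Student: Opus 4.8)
The plan is to establish the isometry through the two-sided estimate $\|Nu\|_{A^{n+1}(G)}=\|u\|_{A^n(G)}$, obtained from a complete contraction in each direction: I will show that $N$ itself is a complete contraction and that it admits a completely contractive left inverse $M$, so that $\|u\|=\|MNu\|\le\|Nu\|\le\|u\|$. Throughout I use the identifications $A^m(G)=\otimes_m^{eh}A(G)$ and $VN^m(G)=\otimes_m^{\sigma h}VN(G)$, the associativity of the extended Haagerup tensor product, and the submultiplicativity of its norm under the matrix product $\odot$. Before the estimates, note that $N$ does land in $A^{n+1}_{inv}(G)$: since $(x_ix)(x_{i+1}x)^{-1}=x_ix_{i+1}^{-1}$, the differences are unchanged under the diagonal right translation, so $Nu(x_1x,\dots,x_{n+1}x)=Nu(x_1,\dots,x_{n+1})$ once $Nu$ is known to lie in $A^{n+1}(G)$.

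The engine is the case $n=1$, i.e.\ that $N_1\colon A(G)\to A^2(G)$, $N_1u(x_1,x_2)=u(x_1x_2^{-1})$, is a complete isometry. For the contractive direction I would pass to the predual: the adjoint $N_1^*\colon VN(G)\otimes^{\sigma h}VN(G)\to VN(G)$ is given on generators by $N_1^*(\lambda(x_1)\otimes\lambda(x_2))=\lambda(x_1x_2^{-1})=\lambda(x_1)\lambda(x_2^{-1})$, whence $N_1^*=m\circ(\mathrm{id}\otimes R)$, where $m$ is the normal, completely contractive multiplication $VN(G)\otimes^{\sigma h}VN(G)\to VN(G)$ and $R$ is the unitary antipode $R(\lambda(x))=\lambda(x^{-1})$, a normal complete isometry because $G$ is compact, hence unimodular. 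As $\mathrm{id}\otimes R$ is completely contractive by functoriality of the $\sigma$-Haagerup tensor product, $N_1^*$ is a normal complete contraction, so its predual $N_1$ is completely contractive. For the reverse bound I would use the slice $M_1\colon A^2(G)\to A(G)$, $M_1w(y)=w(y,e)$, which is completely contractive (evaluation in the last leg) and satisfies $M_1N_1=\mathrm{id}$.

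For general $n$ the contractivity of $N$ follows by a factorise-and-merge argument built on $N_1$. Given $u\in A^n(G)$ and $\varepsilon>0$, fix a near-optimal representation $u=\sum_k u_1^{(k)}\otimes\cdots\otimes u_n^{(k)}$ in $\otimes_n^{eh}A(G)$, and apply $N_1$ to each tensor leg to write $u_j^{(k)}(x_jx_{j+1}^{-1})=\sum_{l}a_{jl}^{(k)}(x_j)\,b_{jl}^{(k)}(x_{j+1})$ with row/column norms multiplying to $\|u_j^{(k)}\|_{A(G)}$. Substituting into $Nu=\sum_k\prod_{j=1}^n u_j^{(k)}(x_jx_{j+1}^{-1})$ and collecting the two occurrences of each interior variable $x_{j+1}$ into the single pointwise product $b_{jl_j}^{(k)}a_{(j+1)l_{j+1}}^{(k)}$ (which lives in $A(G)$) exhibits $Nu$ as a matrix product $M_1\odot M_2\odot\cdots\odot M_{n+1}$ in $\otimes_{n+1}^{eh}A(G)=A^{n+1}(G)$: here $M_1$ is the row of the $a_{1l_1}^{(k)}$, $M_{n+1}$ the column of the $b_{nl_n}^{(k)}$, and each interior $M_{j+1}$ the matrix with entries $b_{jl_j}^{(k)}a_{(j+1)l_{j+1}}^{(k)}$. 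Submultiplicativity gives $\|Nu\|_{eh}\le\prod_j\|M_j\|$, and the interface norms telescope back to the near-optimal value $\|u\|_{A^n(G)}+O(\varepsilon)$ provided pointwise multiplication is completely contractive on the Haagerup tensor product.

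For the lower bound I would take the completely contractive left inverse $M\colon A^{n+1}(G)\to A^n(G)$, $Mw(y_1,\dots,y_n)=w(y_1\cdots y_n,\,y_2\cdots y_n,\dots,y_n,e)$, the slice at $x_{n+1}=e$ composed with the staircase reparametrisation; the telescoping $(y_j\cdots y_n)(y_{j+1}\cdots y_n)^{-1}=y_j$ already used above gives $MN=\mathrm{id}$, and its contractivity follows by the same merge mechanism, now splitting each $w_i(y_i\cdots y_n)$ with the complete contraction $\theta$ of \cite{ToTu} appearing in the proof of Proposition~\ref{EquiCondUnitMDA}. Combining the two estimates yields the isometry. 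I expect the main obstacle to be exactly the bookkeeping in the merge step: the $N_1$-factorisations are in general infinite, so the $M_j$ are infinite matrices whose $\odot$-product must be controlled in the completed extended Haagerup norm, and the crucial interface inequality $\|M_{j+1}\|\le$ (adjacent column norm)$\times$(adjacent row norm) is precisely the assertion that pointwise multiplication $A(G)\otimes^{eh}A(G)\to A(G)$ (equivalently, restriction to the diagonal $A^2(G)\to A(G)$) is a complete contraction. Verifying this Haagerup-level contractivity and arranging the indices so the product telescopes without loss of constants—together with the places where compactness of $G$ enters ($1\in A(G)$ and unimodularity for $R$)—is where the real work lies; an alternative, more computational route would be to run the whole comparison through the matrix coefficients of $\widehat{G}$ as in the proof of Theorem~\ref{IPT}, or to verify the three equivalent conditions of Proposition~\ref{EquiCondUnitMDA} for $Nu$ directly.
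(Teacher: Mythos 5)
Your reduction to the two-variable map $N_1$ together with a completely contractive left inverse $M$ satisfying $MN=\mathrm{id}$ is a genuinely different strategy from the paper's, but it contains a gap that you have yourself located without resolving: the ``merge'' step. Both your upper bound for $N$ and the contractivity of $M$ require that pointwise multiplication of adjacent tensor legs --- equivalently, restriction to the diagonal, $A^2(G)=A(G)\otimes^{eh}A(G)\to A(G)$ --- be completely contractive for the extended Haagerup norm. This is not bookkeeping; it is the crux, and it is most likely false. By the Blecher--Ruan--Sinclair theorem (note $1\in A(G)$ since $G$ is compact), complete contractivity of $m:A(G)\otimes^{h}A(G)\to A(G)$ would make $A(G)$ completely isometrically an operator algebra. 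This is much stronger than the standard fact that $m$ is completely contractive on $A(G)\widehat{\otimes}A(G)$: the Haagerup norm is dominated by the operator projective norm, so contractivity for $\otimes^{h}$ does not follow from contractivity for $\widehat{\otimes}$, and it is not available for infinite compact $G$. Since the same mechanism is invoked for the staircase map $M$, the lower bound collapses as well. A secondary unverified point: your base case needs $\lambda(x)\mapsto\lambda(x^{-1})$ to extend to a normal \emph{complete} isometry of $VN(G)$; this map is an anti-automorphism, and anti-automorphisms of von Neumann algebras need not be completely bounded (cf.\ the transpose on $B(H)$), so this too requires an argument.

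The paper's proof bypasses multiplication estimates entirely. For the contractivity of $N$ it invokes \cite[Theorem 4.1]{ToTu} to write $u(x_1,\ldots,x_n)=\langle\lambda(x_1)\cdots\lambda(x_n)f,g\rangle$ with $\|u\|_{A^n(G)}=\|f\|_2\|g\|_2$; then $Nu(x_1,\ldots,x_{n+1})=\langle\lambda(x_{n+1})^{\ast}f,\lambda(x_1)^{\ast}g\rangle$ depends only on the outer variables, and a Parseval expansion in matrix coefficients exhibits $Nu$ as $\sum_{\pi,i}\phi_{\pi,i}\otimes 1\otimes\cdots\otimes 1\otimes\psi_{\pi,i}$ with $\sum|\phi_{\pi,i}|^2$ and $\sum|\psi_{\pi,i}|^2$ converging uniformly, giving an honest $\otimes^{h}_{n+1}$-representation as in \cite[Theorem 2.2]{SprT}. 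For the contractivity of $N^{-1}$ it builds, from an $eh$-representation $u=\sum_i\phi_i^1\otimes\cdots\otimes\phi_i^{n+1}$ of $u\in A^{n+1}_{inv}(G)$, the operator $T_u(S_1\otimes\cdots\otimes S_n)=\sum_i M_{\phi_i^1}S_1M_{\phi_i^2}\cdots S_nM_{\phi_i^{n+1}}$, identifies its restriction to $VN^n(G)$ with the multiplier $M_{N^{-1}u}$, and applies Proposition~\ref{EquiCondUnitMDA}. Your closing suggestion --- verify the conditions of Proposition~\ref{EquiCondUnitMDA} for $Nu$ directly and argue through matrix coefficients of $\widehat{G}$ --- is essentially what the paper does; to repair your argument you should abandon the leg-by-leg factorisation and adopt one of those two mechanisms.
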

	\begin{proof}
		Let $u\in A^n(G).$ By \cite[Theorem 4.1]{ToTu} and the fact that $G$ is compact, $u$ can be written as $$u(x_1,x_2,\ldots,x_n)=\left\langle \lambda(x_1)\cdots \lambda( x_n)f,g \right\rangle,$$ for some $f,g\in L^2(G).$ In fact, one can choose $f$ and $g$ such that $$\|u\|_{A^n(G)}=\|f\|_2\|g\|_2.$$ Therefore, $Nu(x_1,x_2,\ldots,x_{n+1})=\left\langle \lambda(x_{n+1})^\ast f,\lambda(x_1)^\ast g \right\rangle,$ a consequence of the definition of $N$.
		
		Now, for any $\pi\in\widehat{G}$ and $1\leq i\leq d_\pi^2,$ let $$\phi_{\pi,i}(s)=\langle e_i^\pi , \lambda(s)^\ast g \rangle\mbox{ and }\psi_{\pi,i}(s)=\langle \lambda(s)^\ast f, e_i^\pi \rangle,\ s\in G.$$ As a consequence of Parseval's formula we have $$Nu(x_1,x_2,\ldots,x_n)=\underset{\pi\in\widehat{G}}{\sum}\underset{i=1}{\overset{d_\pi^2}{\sum}}\phi_{\pi,i}(x_1)\psi_{\pi,i}(x_{n+1}).$$ Thus, $Nu = \underset{\pi\in\widehat{G}}{\sum}\underset{i=1}{\overset{d_\pi^2}{\sum}} \phi_{\pi,i} \otimes 1 \otimes \cdots \otimes  \psi_{\pi,i}$.
		
		As in the proof of \cite[Theorem 2.2]{SprT}, one can show that both  $$\underset{\pi\in\widehat{G}}{\sum}\underset{i=1}{\overset{d_\pi^2}{\sum}}|\phi_{\pi,i}|^2\mbox{ and } \underset{\pi\in\widehat{G}}{\sum}\underset{i=1}{\overset{d_\pi^2}{\sum}}|\psi_{\pi,i}|^2$$ converge uniformly in $C(G).$ But, it can be seen that $Nu\in  \otimes_{n+1}^{h} A(G).$ Further, $\|Nu\|_{\otimes_{n+1}^{h} C(G)}\leq \|Nu\|_{\otimes_{n+1}^{h} A(G)}.$ Since $\otimes^{h}_{n+1} A(G) \subseteq  \otimes^{eh}_{n+1} A(G)$ \cite[Theorem 5.3]{ER2} isometrically, the mapping $N$ is a contraction.
	
		Conversely, let $u \in A^{n+1}_{inv}(G).$ Define $$T_u:\otimes_n^{\sigma h}\ \mathcal{B}(L^2(G))\rightarrow\mathcal{B}(L^2(G))$$ as $$T_u(S_1\otimes S_2\otimes\cdots \otimes S_{n})=\underset{i=1}{\overset{\infty}{\sum}}\ M_{\phi^1_i}S_1M_{\phi^2_i}S_2M_{\phi^3_i}\cdots S_nM_{\phi^{n+1}_i},$$ where $u=\underset{i=1}{\overset{\infty}{\sum}} \phi^1_i\otimes\phi^2_i\otimes\cdots\otimes\phi^{n+1}_i.$ Note that $ T_u(\lambda(x_1)\otimes\cdots \otimes\lambda(x_n))(f)(x_{n+1})$ $=$ $u(e,x_1^{-1},\ldots,x_n^{-1}x_{n-1}^{-1}\ldots x_2^{-1} x_1^{-1}) \lambda(x_1x_2\ldots x_n)f(x_{n+1}).$ Thus $T_u(\otimes_n^{\sigma h}VN(G))\subseteq VN(G).$ For $v \in A^n(G),$ let $$M_v(\lambda(x_1) \otimes \cdots \otimes \lambda(x_n)) = v(x_1,...,x_n) \lambda(x_1x_2...x_n).$$ Then $M_{N^{-1}(u)} = T_u \restriction_{\otimes_n^{\sigma h}VN(G)}.$ Thus $M_{N^{-1}(u)}$ satisfies $(ii)$ of  Proposition \ref{EquiCondUnitMDA} and therefore $\|M_{N^{-1}(u)}\|\leq\|T_u\|\leq\|u\|_{eh}.$ Therefore, $N^{-1}u \in A^n(G).$ In particular, the map $N^{-1}$ is bounded, in fact a contraction.
	\end{proof}
        \begin{lem}\label{RelIdeals1}
			Let $E$ be a closed subset of $G^n$ and let $u\in A^n(G).$
			\begin{enumerate}[(i)]
				\item $u\in I_{A^n(G)}(E)$ if and only if $Nu\in I_{A^{n+1}(G)}(E^\ast).$
				\item $u\in J_{A^n(G)}(E)$ if and only if $Nu\in J_{A^{n+1}(G)}(E^\ast).$
			\end{enumerate}
		\end{lem}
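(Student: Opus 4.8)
The plan is to exploit the fact that $N : A^n(G) \to A^{n+1}_{\mathrm{inv}}(G)$ is an isometric isomorphism, together with an explicit comparison of the pointwise behaviour of $u$ on $E$ and of $Nu$ on $E^\ast$. First I would record the key set-theoretic observation built into the definition of $E^\ast = \{(x_1,\dots,x_{n+1}) : (x_1x_2^{-1},\dots,x_nx_{n+1}^{-1})\in E\}$: for any $(x_1,\dots,x_{n+1})\in G^{n+1}$ one has, by the defining formula $Nu(x_1,\dots,x_{n+1}) = u(x_1x_2^{-1},x_2x_3^{-1},\dots,x_nx_{n+1}^{-1})$, that $Nu(x_1,\dots,x_{n+1}) = 0$ precisely when $u$ vanishes at the image point $(x_1x_2^{-1},\dots,x_nx_{n+1}^{-1})$. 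Since the map $\Psi : G^{n+1}\to G^n$, $\Psi(x_1,\dots,x_{n+1}) = (x_1x_2^{-1},\dots,x_nx_{n+1}^{-1})$, is a continuous surjection with $E^\ast = \Psi^{-1}(E)$, it follows immediately that $u$ vanishes on $E$ if and only if $Nu$ vanishes on $E^\ast$. This gives part (i): $u\in I_{A^n(G)}(E)$ iff $Nu\in I_{A^{n+1}(G)}(E^\ast)$, noting that $Nu\in A^{n+1}_{\mathrm{inv}}(G)\subseteq A^{n+1}(G)$ automatically.

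For part (ii), I would argue that $N$ carries $j_{A^n(G)}(E)$ onto $j_{A^{n+1}_{\mathrm{inv}}(G)}(E^\ast) := \{v\in A^{n+1}_{\mathrm{inv}}(G) : \operatorname{supp}(v) \text{ compact and disjoint from } E^\ast\}$. Indeed, $\operatorname{supp}(Nu) = \Psi^{-1}(\operatorname{supp}(u))$ (here I use that $\Psi$ is an open — in fact, a principal-fibration-type — map, or simply check directly that $Nu$ is nonzero at a point iff $u$ is nonzero at its image, so closures of the zero-complements correspond), and since $G$ is compact, $\Psi^{-1}$ of a compact set is compact and $\operatorname{supp}(u)\cap E = \emptyset$ iff $\Psi^{-1}(\operatorname{supp}(u))\cap \Psi^{-1}(E) = \emptyset$. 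Because $N$ is an isometric isomorphism of $A^n(G)$ onto $A^{n+1}_{\mathrm{inv}}(G)$, it maps the closure $J_{A^n(G)}(E) = \overline{j_{A^n(G)}(E)}$ isometrically onto $\overline{j_{A^{n+1}_{\mathrm{inv}}(G)}(E^\ast)}$, the closure taken inside $A^{n+1}_{\mathrm{inv}}(G)$. The remaining point is to identify this with $J_{A^{n+1}(G)}(E^\ast)\cap A^{n+1}_{\mathrm{inv}}(G)$; this is where the contractive projection $P : A^{n+1}(G)\to A^{n+1}_{\mathrm{inv}}(G)$ of the preceding lemma enters, since $P$ is an $A^{n+1}_{\mathrm{inv}}(G)$-module map that fixes $A^{n+1}_{\mathrm{inv}}(G)$ pointwise, so it sends $j_{A^{n+1}(G)}(E^\ast)$ into $j_{A^{n+1}_{\mathrm{inv}}(G)}(E^\ast)$ (one must check $P$ does not enlarge support beyond $E^{\ast c}$, which holds because $E^\ast$ is itself invariant under the diagonal right-translation action and $P$ averages only over that action), and therefore $P(J_{A^{n+1}(G)}(E^\ast)) = \overline{j_{A^{n+1}_{\mathrm{inv}}(G)}(E^\ast)}$.

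Putting these together: if $u\in J_{A^n(G)}(E)$ then $Nu\in \overline{j_{A^{n+1}_{\mathrm{inv}}(G)}(E^\ast)}\subseteq J_{A^{n+1}(G)}(E^\ast)$; conversely if $Nu\in J_{A^{n+1}(G)}(E^\ast)$ then, applying $P$ and using $P(Nu) = Nu$ since $Nu$ is invariant, we get $Nu\in P(J_{A^{n+1}(G)}(E^\ast)) = \overline{j_{A^{n+1}_{\mathrm{inv}}(G)}(E^\ast)} = N(J_{A^n(G)}(E))$, hence $u\in J_{A^n(G)}(E)$ by injectivity of $N$. The step I expect to be the main obstacle is the support/invariance bookkeeping in the previous paragraph — precisely, verifying that averaging by $P$ over the diagonal right action keeps compact supports disjoint from $E^\ast$ (equivalently, that $E^\ast$ is saturated for that action, which is true exactly because the first-coordinate-difference map $\Psi$ already quotients out that action) and that $P$ maps the ideal $j_{A^{n+1}(G)}(E^\ast)$ into its invariant counterpart with controlled support; everything else is a formal transport of structure along the isometry $N$.
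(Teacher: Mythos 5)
Your proposal is correct and follows essentially the same route as the paper, which simply defers to the argument of Spronk--Turowska \cite[Theorem 3.1]{SprT}: transport along the isometry $N$, the support identity $\operatorname{supp}(Nu)=\Psi^{-1}(\operatorname{supp}(u))$ for the (open, surjective) map $\Psi(x_1,\dots,x_{n+1})=(x_1x_2^{-1},\dots,x_nx_{n+1}^{-1})$, and the averaging projection $P$ onto $A^{n+1}_{inv}(G)$ for the converse of the $J$-statement. You in fact supply the details the paper omits; the only cosmetic point is that you only need (and only justify) the inclusion $P(J_{A^{n+1}(G)}(E^\ast))\subseteq \overline{j_{A^{n+1}(G)}(E^\ast)\cap A^{n+1}_{inv}(G)}$, not the stated equality.
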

        \begin{proof}
            The proof follows a similar approach as in \cite[Theorem 3.1]{SprT}.
        \end{proof}
	
	Here is the result of parallel synthesis.
	\begin{thm}\label{Syn_An_Anplus1}
		Assume that $G$ is compact. Let $E \subseteq G^n$ be closed and let $$E^* = \{(x_1,x_2,...,x_{n+1}) :(x_1x_2^{-1}, x_2x_3^{-1},..., x_nx_{n+1}^{-1}) \in E\}.$$
		(i) The set $E$ is a set of spectral synthesis for $A^n(G)$ if and only if $E^*$ is a set of spectral synthesis for $A^{n+1}(G)$.\\
		(ii) The set $E$ is a strong Ditkin set for $A^n(G)$ if and only if $E^*$ is a strong Ditkin set for $A^{n+1}(G)$.
	\end{thm}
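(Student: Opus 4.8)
The plan is to deduce the whole statement from three tools already in hand: the isometric \emph{algebra} isomorphism $N\colon A^n(G)\to A^{n+1}_{inv}(G)$, the contractive projection $P\colon A^{n+1}(G)\to A^{n+1}_{inv}(G)$ (an $A^{n+1}_{inv}(G)$-module map), and the identification of hulls and kernels in Lemma \ref{RelIdeals1}. Writing $\rho(x_1,\dots,x_{n+1})=(x_1x_2^{-1},\dots,x_nx_{n+1}^{-1})$, so that $Nu=u\circ\rho$ and $E^{\ast}=\rho^{-1}(E)$, note that $E^{\ast}$ is invariant under the diagonal right translation $(x_1,\dots,x_{n+1})\mapsto(x_1x,\dots,x_{n+1}x)$ --- precisely the translation that $P$ averages out. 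I would first record the bookkeeping obtained by transporting supports along $\rho$ (together with Lemma \ref{RelIdeals1} for the closed ideals): $N$ maps $j_{A^n(G)}(E)$, $J_{A^n(G)}(E)$ and $I_{A^n(G)}(E)$ \emph{onto} $A^{n+1}_{inv}(G)\cap j_{A^{n+1}(G)}(E^{\ast})$, $A^{n+1}_{inv}(G)\cap J_{A^{n+1}(G)}(E^{\ast})$ and $A^{n+1}_{inv}(G)\cap I_{A^{n+1}(G)}(E^{\ast})$ respectively; and, since $E^{\ast}$ is invariant, $P$ maps each of $j_{A^{n+1}(G)}(E^{\ast})$, $J_{A^{n+1}(G)}(E^{\ast})$, $I_{A^{n+1}(G)}(E^{\ast})$ into its intersection with $A^{n+1}_{inv}(G)$.

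Granting this, the backward implications are immediate. If $E^{\ast}$ is a set of synthesis for $A^{n+1}(G)$ and $u\in I_{A^n(G)}(E)$, then $Nu\in A^{n+1}_{inv}(G)\cap I_{A^{n+1}(G)}(E^{\ast})=A^{n+1}_{inv}(G)\cap J_{A^{n+1}(G)}(E^{\ast})$, so $u=N^{-1}(Nu)\in J_{A^n(G)}(E)$ by Lemma \ref{RelIdeals1}(ii). If $E^{\ast}$ is strong Ditkin, witnessed by a bounded $\{w_k\}\subseteq j_{A^{n+1}(G)}(E^{\ast})$, I would put $v_k=N^{-1}(Pw_k)$, a bounded sequence in $j_{A^n(G)}(E)$; for $u\in I_{A^n(G)}(E)$, since $Nu\in A^{n+1}_{inv}(G)$ and $P$ is a module map, $P(w_k\!\cdot\!Nu)=(Pw_k)(Nu)=N(v_ku)$, hence $\|v_ku-u\|_{A^n(G)}=\|N(v_ku-u)\|\le\|w_k\!\cdot\!Nu-Nu\|\to0$, so $E$ is strong Ditkin.

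For the forward implications I would mirror the proof of Theorem \ref{IPT}(i)--(ii), with the averaging over $H^n$ there replaced by averaging over the diagonal right $G$-action on $G^{n+1}$. Start with $w\in I_{A^{n+1}(G)}(E^{\ast})$. For each irreducible unitary representation $\Pi$ of $G^{n+1}$, form (exactly as in that proof) the matrix-valued function obtained by integrating the matrix coefficients of $\Pi$ against the diagonal translates of $w$: its entries lie in $A^{n+1}(G)$, and each factors as a matrix coefficient of $\Pi$ --- an element of $A(G^{n+1})\subseteq A^{n+1}(G)$ --- times a diagonal-invariant function that vanishes on $E^{\ast}$ (because $E^{\ast}$ is invariant and $w$ vanishes on it). Thus each such invariant factor lies in $A^{n+1}_{inv}(G)\cap I_{A^{n+1}(G)}(E^{\ast})=N(I_{A^n(G)}(E))$. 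When $E$ is a set of synthesis, this set equals $N(J_{A^n(G)}(E))\subseteq J_{A^{n+1}(G)}(E^{\ast})$, so all the products above lie in the ideal $J_{A^{n+1}(G)}(E^{\ast})$; picking a bounded approximate identity $\{e_\alpha\}$ for $L^1(G^{n+1})$ with $e_\alpha\!\cdot\!w$ a finite linear combination of these products gives $e_\alpha\!\cdot\!w\in J_{A^{n+1}(G)}(E^{\ast})$ and $e_\alpha\!\cdot\!w\to w$, whence $w\in J_{A^{n+1}(G)}(E^{\ast})$; as $w$ was arbitrary this proves (i). For (ii), when $E$ is strong Ditkin with bounded $\{v_k\}\subseteq j_{A^n(G)}(E)$, set $b_k=N(v_k)$, a bounded sequence in $j_{A^{n+1}(G)}(E^{\ast})$; from $\|b_k(Na)-Na\|=\|v_ka-a\|_{A^n(G)}\to0$ for $a\in I_{A^n(G)}(E)$ one gets that $\{b_k\}$ is a Ditkin sequence on $A^{n+1}_{inv}(G)\cap I_{A^{n+1}(G)}(E^{\ast})$, hence on each product above, hence on each finite sum $e_\alpha\!\cdot\!w$, and finally on every $w\in I_{A^{n+1}(G)}(E^{\ast})$ by the uniform bound on $\{b_k\}$ together with $e_\alpha\!\cdot\!w\to w$; so $\{b_k\}$ witnesses that $E^{\ast}$ is strong Ditkin.

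I expect the representation-theoretic step in the forward direction to be the only real work: verifying that the $\Pi$-averages of $w$ indeed land in $A^{n+1}(G)$ and split as a matrix coefficient times an element of $A^{n+1}_{inv}(G)\cap I_{A^{n+1}(G)}(E^{\ast})$, and that a suitable approximate identity of $L^1(G^{n+1})$ writes $e_\alpha\!\cdot\!w$ as a finite combination of them. This is the exact analogue --- carried out relative to the diagonal $G$-action and transported through $N$ --- of the matrix-coefficient computation already done in the proof of Theorem \ref{IPT}; everything else reduces to support bookkeeping for $N$ and $P$ and to Lemma \ref{RelIdeals1}.
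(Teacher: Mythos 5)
Your proposal follows essentially the same route as the paper: the backward implications via $N$, Lemma \ref{RelIdeals1} and the projection $P$ (with $N^{-1}(P\omega_m)$ as the Ditkin sequence for $E$), and the forward implications by averaging $\omega$ over the diagonal right $G$-action, splitting the resulting matrix-valued functions into coefficient functions times diagonally invariant factors handled through $N$ and the hypothesis on $E$, and finishing with an approximate-identity argument as in Spronk--Turowska. The only cosmetic difference is that the paper integrates against irreducible representations $\pi$ of $G$ rather than of $G^{n+1}$ (the averaging being over the diagonal copy of $G$) and accordingly uses the $L^1(G)$-module action on $A^{n+1}(G)$.
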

	\begin{proof}
    (i)	Note that the backward part is an easy consequence of Lemma \ref{RelIdeals1}. We shall now prove the forward part. Here we closely follow Spronk and Turowska \cite[Theorem 3.1]{SprT}. Suppose that $E$ is a set of synthesis for $A^n(G).$ Let $\omega\in I_{A^{n+1}(G)}(E^\ast).$ Consider a continuous irreducible unitary representation $\pi$ of $G$ with $\mathcal{H}_\pi$ as the representation space. Define two functions $\omega^\pi$ and $\widetilde{\omega}^\pi$ on $G^{n+1}$ with range in $\mathcal{H}_\pi,$ as $$\omega^\pi(x_1,x_2,\ldots,x_{n+1})=\int_G \omega(x_1x,x_2x,\ldots,x_{n+1}x)\pi(x)\ dx$$ and $$\widetilde{\omega}^\pi(x_1,x_2,\ldots,x_{n+1})=\pi(x_1)\omega^\pi(x_1,x_2,\ldots,x_{n+1}).$$ Since $\mathcal{H}_\pi$ is finite dimensional, let us fix an orthonormal basis and write $u_{ij}^\pi$ for the matrix coefficient functions of $\pi.$ Also, let $\omega^\pi_{ij}=u_{ij}^\pi\omega^\pi$ and $\widetilde{\omega}^\pi_{ij}=u_{ij}^\pi\widetilde{\omega}^\pi.$ Note that $\omega^\pi_{ij}\in I_{A^{n+1}(G)}(E^\ast)$ and hence $\widetilde{\omega}^\pi_{ij}=\underset{k=1}{\overset{d_\pi}{\sum}}(u_{ik}^\pi\otimes 1\otimes\cdots\otimes 1)\omega_{kj}^\pi\in I_{A^{n+1}(G)}(E^\ast).$ Observe that $$\widetilde{\omega}^\pi(x_1x,x_2x,\ldots,x_{n+1}x)=\widetilde{\omega}^\pi(x_1,x_2,\ldots,x_{n+1})$$ and hence $\widetilde{\omega}_{ij}^\pi\in A^{n+1}_{inv}(G).$ Thus, by Lemma \ref{RelIdeals1}, $N^{-1}(\widetilde{\omega}_{ij}^\pi)\in I_{A^n(G)}(E)$ and by the assumption on $E,$ we have $N^{-1}(\widetilde{\omega}_{ij}^\pi)\in J_{A^n(G)}(E).$ Applying Lemma \ref{RelIdeals1} once again, we have that $\widetilde{\omega}_{ij}^\pi\in J_{A^{n+1}(G)}(E^\ast.)$ Since $$\omega^\pi(x_1,x_2,\ldots,x_{n+1})=\pi(x^{-1})\widetilde{\omega}^\pi(x_1,x_2,\ldots,x_{n+1}),$$ we have $$\omega_{ij}^\pi=\underset{k=1}{\overset{d_\pi}{\sum}}(\check{u}_{ik}^\pi\otimes 1\otimes\cdots\otimes 1)\widetilde{\omega}_{kj}^\pi\in J_{A^{n+1}(G)}(E^\ast),$$ where $\check{u}(x)=u(x^{-1}).$ Using the fact that $A^{n+1}(G)$ is an essential $L^1(G)$-module, the proof is now completed by using an approximate identity argument (as in \cite[Theorem 3.1]{SprT}) to conclude that $\omega\in J_{A^{n+1}(G)}(E^\ast.)$ Thus $E^\ast$ is a set of spectral synthesis for $A^{n+1}(G).$\\
    (ii) Suppose that $E$ is a strong Ditkin set for $A^n(G).$ Let $\{u_m\}$ be a bounded sequence in $j_{A^n(G)}(E)$ such that $u_mu\rightarrow u$ for all $u\in I_{A^n(G)}(E).$ Let $\omega\in I_{A^{n+1}(G)}(E^\ast).$ For each $\pi\in\widehat{G},$ let $\omega^\pi,$ $\omega^\pi_{ij}\in I_{A^{n+1}(G)}(E^\ast)\cap A_{inv}^{n+1}(G),$ as above. Then $N^{-1}\omega^\pi_{ij}\in I_{A^n(G)}(E)$ and hence $u_m N^{-1}\omega^\pi_{ij}\rightarrow N^{-1}\omega^\pi_{ij}.$ Applying $N,$ $Nu_m\omega^\pi_{ij}\rightarrow \omega^\pi_{ij}.$ As mentioned above, choose an approximate identity $\{f_\alpha\}$ for $L^1(G).$ Thus, for each $\alpha,$ $$Nu_m(f_\alpha\cdot\omega)\rightarrow f_\alpha\cdot\omega.$$ Now, 
			\begin{eqnarray*}
				\|Nu_m\omega - \omega\|_{A^{n+1}(G)} &\leq& \|Nu_m\omega - Nu_m(f_\alpha\cdot\omega)\|_{A^{n+1}(G)} \\ & & + \|Nu_m(f_\alpha\cdot\omega) - f_\alpha\cdot\omega\|_{A^{n+1}(G)} + \|f_\alpha\cdot\omega - \omega\|_{A^{n+1}(G)} \\ &\leq& \|Nu_m\|_{A^{n+1}(G)}\|f_\alpha\cdot\omega - \omega\|_{A^{n+1}(G)} \\ & & + \|Nu_m(f_\alpha\cdot\omega) - f_\alpha\cdot\omega\|_{A^{n+1}(G)} + \|f_\alpha\cdot\omega - \omega\|_{A^{n+1}(G)} \\ &\leq& C\|f_\alpha\cdot\omega - \omega\|_{A^{n+1}(G)} + \|Nu_m(f_\alpha\cdot\omega) - f_\alpha\cdot\omega\|_{A^{n+1}(G)},
			\end{eqnarray*}
			where $C$ is a bound for the sequence $\{Nu_m\}.$ Now, fix an $\alpha$ so that the first term is small and for this $\alpha,$ choose $m$ large enough so that the second term is also small. As the sequence $\{Nu_m\}$ is bounded, $E^\ast$ is a strong Ditkin set.
			
			We now prove the converse. This part, as we will see, easily follows from Lemma \ref{RelIdeals1}. Suppose that $E^\ast$ is a strong Ditkin set for $A^{n+1}(G).$ Let $\{\omega_m\}$ be a bounded sequence in $j_{A^{n+1}(G)}(E^\ast)$ such that $$\|\omega_m\omega-\omega\|_{A^{n+1}(G)}\rightarrow 0 \mbox{ as }m\rightarrow\infty\mbox{ and }\forall\ \omega\in I_{A^{n+1}(G)}(E^\ast).$$ Let $u\in I_{A^n(G)}(E).$ Then, by Lemma \ref{RelIdeals1}, $Nu\in I_{A^{n+1}(G)}(E^\ast)$ and hence $$\|\omega_m Nu-Nu\|\rightarrow\mbox{ as }m\rightarrow\infty.$$ Using the fact that $P^n$ is continuous and also a $V_{inv}^n(G)$-module map, we have that $$P(\omega_m Nu)\rightarrow Nu\mbox{ as }m\rightarrow\infty.$$ Hence $N^{-1}(P(\omega_m Nu))\rightarrow u$ in $A^n(G).$ Observe that the sequence $\{\omega_m\}$ is contained in $j_{A^{n+1}(G)}(E^\ast)$ and hence the sequence $\{N^{-1}P\omega_m\}$ belongs to $j_{A^n(G)}(E),$ thereby showing that $(N^{-1}P\omega_m)u\rightarrow u$ in $J_{A^n(G)}(E).$ Thus $E$ is a strong Ditkin set and hence the proof of the theorem.
		\end{proof}
	\subsection{Application to Malliavin's theorem}
	
	In this final part of the paper, we prove the multidimensional analogue of Malliavin's theorem. 
	
	We shall begin with a corollary. Although this generalizes the case when $n=1,$ it makes use of the same.
	\begin{cor}\label{MallThm1}
		If $G$ is an infinite compact group then there exits a closed subset $E$ of $G^n$ such that $E$ is not a set of synthesis for $A^n(G).$
	\end{cor}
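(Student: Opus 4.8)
The plan is to deduce the multidimensional statement from the classical one‑dimensional Malliavin theorem by iterating the parallel‑synthesis result, Theorem \ref{Syn_An_Anplus1}. I would argue by induction on $n$.

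For the base case $n=1$: an infinite compact group is non‑discrete, and the nonabelian version of Malliavin's theorem due to Kaniuth and Lau \cite{KL} (resting on \cite{Ma1, Ma2, Ma3} in the abelian case) produces a closed subset $E_1 \subseteq G$ which is not a set of synthesis for $A(G) = A^1(G)$. For the inductive step, assume there is a closed $E \subseteq G^n$ which is not a set of synthesis for $A^n(G)$, and form $E^\ast \subseteq G^{n+1}$ as in Theorem \ref{Syn_An_Anplus1}. Since the map $(x_1,\ldots,x_{n+1}) \mapsto (x_1x_2^{-1}, x_2x_3^{-1}, \ldots, x_nx_{n+1}^{-1})$ is continuous from $G^{n+1}$ onto $G^n$, the preimage $E^\ast$ of the closed set $E$ is closed. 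By the contrapositive of part (i) of Theorem \ref{Syn_An_Anplus1}, $E^\ast$ fails spectral synthesis for $A^{n+1}(G)$; this closes the induction and yields the corollary. Concretely, the witnessing closed subset of $G^n$ is obtained by applying the $\ast$‑construction $n-1$ times to $E_1$.

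I do not anticipate a genuine obstacle: the substantive content has been front‑loaded into Theorem \ref{Syn_An_Anplus1} and into the known $n=1$ case, and the compactness of $G$ keeps us within the hypotheses of Theorem \ref{Syn_An_Anplus1} at every stage of the induction. The only point deserving an explicit (routine) remark is the closedness of $E^\ast$, which follows from continuity of the coordinate map above.
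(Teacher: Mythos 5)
Your proposal is correct and follows essentially the same route as the paper: the paper's proof is precisely the combination of the non-synthesis set for $A(G)$ from \cite[Proposition 2.2]{KL} with an iteration of Theorem \ref{Syn_An_Anplus1}, which is exactly your induction. Your explicit remark on the closedness of $E^\ast$ is a reasonable (routine) addition that the paper leaves implicit.
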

	\begin{proof}
		This follows from Theorem \ref{Syn_An_Anplus1} and the existence of a set of nonsynthesis for $A(G)$ \cite[Proposition 2.2]{KL}.
	\end{proof}
	We now prove Malliavin's theorem for abelian groups.
	\begin{cor}\label{MallThm2}
		Let $G$ be a locally compact abelian group. Then every closed subset of $G^n$ is a set of spectral synthesis for $A^n(G)$ if and only if $G$ is discrete.
	\end{cor}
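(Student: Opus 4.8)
The plan is to prove the two implications separately: the ``discrete'' direction by a soft approximate-identity argument, and the ``non-discrete'' direction by transporting a non-synthesis set up from a subgroup, using the structure theory of locally compact abelian groups together with Corollary~\ref{MallThm1}, Lemma~\ref{IPTConv}, and the injection theorem (Theorem~\ref{Inj_Thm_Sp_Syn}).

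Suppose first that $G$ is discrete. Then $G^n$ is discrete, every subset of $G^n$ is closed, and $G$ (being abelian) is amenable. First I would observe that $A^n(G)$ admits a contractive approximate identity consisting of finitely supported functions on $G^n$: by Lemma~\ref{FunctProp}(vi) the inclusion $\Theta\colon A(G^n)\hookrightarrow A^n(G)$ is a contractive algebra homomorphism with dense range, and since $G^n$ is amenable, Leptin's theorem supplies a contractive approximate identity $\{e_\alpha\}$ for $A(G^n)$ made up of finitely supported positive-definite functions; a routine three-term estimate using the density of $\Theta(A(G^n))$ then shows that $\{\Theta(e_\alpha)\}$ is an approximate identity for $A^n(G)$. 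Now fix a closed $E\subseteq G^n$ and $u\in I_{A^n(G)}(E)$. For each $\alpha$ the product $e_\alpha u$ belongs to $A^n(G)$, is finitely --- hence compactly --- supported, and vanishes on $E$ because $u$ does; thus $e_\alpha u\in j_{A^n(G)}(E)$, and $e_\alpha u\to u$ gives $u\in J_{A^n(G)}(E)$. Hence $I_{A^n(G)}(E)=J_{A^n(G)}(E)$ for every closed $E\subseteq G^n$, so every closed subset of $G^n$ is a set of synthesis.

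Conversely, suppose $G$ is not discrete. By the principal structure theorem for locally compact abelian groups, $G$ is topologically isomorphic to $\mathbb{R}^d\times G_0$, where $G_0$ has a compact open subgroup $K$. I claim that $G$ contains a closed subgroup $H$ admitting an infinite compact quotient $Q=H/N$. Indeed, if $d\geq 1$ take $H=\mathbb{R}$, which sits in $G$ as a closed subgroup, and $Q=\mathbb{R}/\mathbb{Z}=\mathbb{T}$; if $d=0$, then $G=G_0$ is non-discrete, so its open subgroup $K$ is non-discrete, hence infinite, and we take $H=K$, $N=\{e\}$, $Q=K$. In either case $N$ is a closed subgroup of $H$, and it is normal since $H$ is abelian. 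Now Corollary~\ref{MallThm1} produces a closed set $\widetilde{E}\subseteq Q^n$ that is \emph{not} a set of synthesis for $A^n(Q)$. Applying the contrapositive of Lemma~\ref{IPTConv}(i) to the quotient $q_n\colon H^n\to (H/N)^n=Q^n$, the closed set $q_n^{-1}(\widetilde{E})\subseteq H^n$ fails to be a set of spectral synthesis for $A^n(H)$. Finally, since $H$ is a closed subgroup of $G$ and $q_n^{-1}(\widetilde{E})$ is closed in $H^n$, hence in $G^n$, the injection theorem (Theorem~\ref{Inj_Thm_Sp_Syn}, with $X=VN^n(G)$, so that $X_H=VN^n(H)$ and $X$-synthesis means ordinary synthesis) shows that $q_n^{-1}(\widetilde{E})$, viewed as a closed subset of $G^n$, is not a set of synthesis for $A^n(G)$. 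This produces the required closed non-synthesis set and completes the proof.

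The routine parts are the three-term approximate-identity estimate in the discrete case and the verification that $q_n^{-1}(\widetilde{E})$ is closed in $G^n$. The real content lies in the structural dichotomy --- every non-discrete locally compact abelian group contains $\mathbb{R}$ as a closed subgroup or has an infinite compact open subgroup --- and in correctly orienting Lemma~\ref{IPTConv}, which we apply in the form ``failure of synthesis for the quotient forces failure of synthesis for the big group''. I do not expect any single step to be a genuine obstacle once the hypotheses of the injection theorem (closed subgroup; normality is automatic here) and of Lemma~\ref{IPTConv} have been lined up.
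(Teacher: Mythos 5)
Your proposal is correct and follows essentially the same route as the paper, which proves this corollary by exactly the combination you use: the structure theorem $G\cong\mathbb{R}^d\times G_0$ with $G_0$ containing a compact open subgroup, Corollary~\ref{MallThm1} to produce a non-synthesis set in an infinite compact group ($\mathbb{T}=\mathbb{R}/\mathbb{Z}$ or the compact open subgroup $K$), the contrapositive of Lemma~\ref{IPTConv}(i), and Theorem~\ref{Inj_Thm_Sp_Syn} to transport the failure up to $G$. Your explicit approximate-identity argument for the discrete direction (which the paper leaves implicit) is also sound, since discrete abelian groups are amenable and supports in a discrete group are automatically clopen.
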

	\begin{proof}
		This follows from the structure theorem for abelian groups \cite[Theorem 2.4.1]{Ru}, Lemma \ref{IPTConv}, Theorem \ref{Inj_Thm_Sp_Syn} and Corollary \ref{MallThm1}.
	\end{proof}
	Before we proceed to the main theorem, here is a simple lemma.
	\begin{lem}\label{clopen}
		If $E$ is a clopen subset of $G^n$ such that $u\in\overline{uA^n(G)}$ for each $u\in I_{A^n(G)}(E),$ then $E$ is a set of spectral synthesis.
	\end{lem}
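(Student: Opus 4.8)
The plan is to prove the inclusion $I_{A^n(G)}(E)\subseteq J_{A^n(G)}(E)$; since the reverse inclusion always holds, this gives $I_{A^n(G)}(E)=J_{A^n(G)}(E)$, i.e.\ that $E$ is a set of spectral synthesis. So I fix $u\in I_{A^n(G)}(E)$ and aim, given $\epsilon>0$, to produce an element of $j_{A^n(G)}(E)$ within $\epsilon$ of $u.$

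First I would record the two structural facts about $A^n(G)$ on which the argument rests: that $A^n(G)$ is a Banach algebra under pointwise multiplication, and that $A^n(G)\cap C_c(G^n)$ is dense in $A^n(G).$ The latter holds because the compactly supported elements of the Fourier algebra $A(G^n)$ are dense in $A(G^n),$ while by Lemma \ref{FunctProp}(vi) the inclusion $A(G^n)\hookrightarrow A^n(G)$ is a contraction with dense range; hence $A(G^n)\cap C_c(G^n)\subseteq A^n(G)\cap C_c(G^n)$ is already dense in $A^n(G).$

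The main step uses the hypothesis. Since $u\in\overline{uA^n(G)},$ choose $v\in A^n(G)$ with $\|u-uv\|_{A^n(G)}<\epsilon/2;$ assuming $u\neq 0$ (the case $u=0$ being trivial), use the density just noted to pick $w\in A^n(G)\cap C_c(G^n)$ with $\|v-w\|_{A^n(G)}<\epsilon/(2\|u\|_{A^n(G)}),$ so that $\|u-uw\|_{A^n(G)}\leq\|u-uv\|_{A^n(G)}+\|u\|_{A^n(G)}\|v-w\|_{A^n(G)}<\epsilon.$ It then remains to check that $uw\in j_{A^n(G)}(E).$ The function $uw$ is continuous on $G^n$ and vanishes on $E,$ and since $E$ is open this forces $supp(uw)$ to be disjoint from $E;$ moreover $supp(uw)\subseteq supp(w),$ which is compact, so $uw$ has compact support disjoint from $E.$ Letting $\epsilon\to 0$ yields $u\in\overline{j_{A^n(G)}(E)}=J_{A^n(G)}(E),$ as required.

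I do not expect a serious obstacle: the content of the lemma is precisely that over a clopen set, synthesis collapses to the approximate-factorization condition $u\in\overline{uA^n(G)}.$ The only points requiring care are the two structural facts about $A^n(G)$ above and the double role of ``clopen'': openness of $E$ is what turns ``$uw$ vanishes on $E$'' into ``$supp(uw)\cap E=\emptyset$'', while closedness of $E$ is what makes the notions $J_{A^n(G)}(E)$ and ``set of spectral synthesis'' meaningful in the first place.
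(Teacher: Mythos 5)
Your proof is correct, and it takes a genuinely different route from the paper's. You argue entirely on the primal side: starting from $u\in I_{A^n(G)}(E)$, you use the hypothesis to get $v$ with $uv$ close to $u$, replace $v$ by a compactly supported $w$ (via the density of $A(G^n)\cap C_c(G^n)$ coming from Lemma \ref{FunctProp}(vi) and the submultiplicativity of the norm), and observe that $uw$ lands in $j_{A^n(G)}(E)$ because openness of $E$ turns ``$uw$ vanishes on $E$'' into ``$\mathrm{supp}(uw)\cap E=\emptyset$''; this exhibits $I_{A^n(G)}(E)\subseteq J_{A^n(G)}(E)$ directly. The paper instead dualizes through Proposition \ref{Synth_Char_Supp}: it fixes $T\in VN^n(G)$ with $\mathrm{supp}(T)\subseteq E$, uses the same openness of $E$ to force $\mathrm{supp}(u)\cap E=\emptyset$ and hence $u\cdot T=0$ (by Lemma \ref{Supp_prop1}(ii)), and then passes the net $uu_\alpha\to u$ through the pairing to get $\langle u,T\rangle=\lim\langle u_\alpha,u\cdot T\rangle=0$, so $T\in I_{A^n(G)}(E)^\perp=VN^n_E(G)$. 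Both arguments use openness of $E$ in exactly the same way and both need compactly supported approximants; the trade-off is that your version avoids the support calculus on $VN^n(G)$ and Proposition \ref{Synth_Char_Supp} altogether and yields the slightly stronger conclusion that every $u\in I_{A^n(G)}(E)$ is approximated by elements of $uA^n(G)\cap j_{A^n(G)}(E)$, whereas the paper's version stays consistent with the duality framework it uses elsewhere. No gap to report.
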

	\begin{proof}
		Let $E$ be a clopen subset of $G^n$ such that $u\in\overline{uA^n(G)}.$ To show that $E$ is a set of spectral synthesis, we make use of Proposition \ref{Synth_Char_Supp}. So, let $T\in VN^n(G)$ such that $supp(T)\subseteq E$ and let $u\in I_{A^n(G)}(E).$ By assumption, there exists a net $\{u_\alpha\}$ contained in $A^n(G)$ such that $uu_\alpha\rightarrow u.$ In fact, we can choose these $u_\alpha$'s such that $supp(u_\alpha)$ is compact. Then, as $E$ is a clopen set, $u_\alpha \cdot t=0$ and hence $u\cdot T=0.$ Now, by regularity of $A^n(G)$ there exists $v\in A^n(G)$ such that $v(x)=1$ for all $x\in supp(u).$ Thus,
		\begin{align*}
			\langle u,T \rangle &= \langle \lim uu_\alpha,T \rangle = \lim \langle uu_\alpha,T \rangle = \lim \langle u_\alpha,u\cdot  T \rangle \qedhere
		\end{align*}
	\end{proof}
	Here is the final result of this paper on the multidimensional analogue of Malliavin's theorem on the failure of spectral synthesis in non-discrete groups.
	\begin{thm}
		Let $G$ be an arbitrary locally compact group. Every closed subset of $G^n$ is a set of spectral synthesis for $A^n(G)$ if and only if $G$ is discrete and $u\in \overline{uA^n(G)}\ \forall\ u\in A^n(G).$
	\end{thm}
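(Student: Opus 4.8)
The plan is to prove the two implications separately; the reverse one is short, and the forward one splits into establishing that $G$ is discrete and establishing the module condition. For the \emph{reverse implication}, suppose $G$ is discrete and $u\in\overline{uA^n(G)}$ for every $u\in A^n(G)$. Since $G$, hence $G^n$, is discrete, every closed subset $E\subseteq G^n$ is automatically clopen, and in particular $u\in\overline{uA^n(G)}$ holds for every $u\in I_{A^n(G)}(E)$; thus Lemma~\ref{clopen} applies directly and yields that $E$ is a set of spectral synthesis for $A^n(G)$.

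For the \emph{forward implication — discreteness}, assume every closed subset of $G^n$ is a set of synthesis for $A^n(G)$ and, towards a contradiction, that $G$ is not discrete. As in the proof of the nonabelian Malliavin theorem of Kaniuth and Lau, the structure theory of locally compact groups shows that a non-discrete $G$ contains a closed subgroup $H$ that is either topologically isomorphic to $\mathbb{R}$ or is an infinite compact group: if the identity component $G_0\neq\{e\}$, then $G_0$ is a non-trivial connected group, which is either infinite compact or contains a closed copy of $\mathbb{R}$; if $G_0=\{e\}$, then $G$ is totally disconnected and van Dantzig's theorem gives a compact open, necessarily infinite, subgroup. In the $\mathbb{R}$-case, Corollary~\ref{MallThm2} supplies a closed $E\subseteq\mathbb{R}^n=H^n$ that is not a set of synthesis for $A^n(H)$; in the infinite-compact case, Corollary~\ref{MallThm1} does the same. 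Applying Theorem~\ref{Inj_Thm_Sp_Syn} with $X=VN^n(G)$ — so that $X_H=VN^n(H)$ and the notion of $X$-synthesis reduces to ordinary synthesis on both $H^n$ and $G^n$ — we conclude that $E$, regarded as a closed subset of $G^n$, is not a set of synthesis for $A^n(G)$, contradicting the hypothesis. Hence $G$ is discrete.

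For the \emph{forward implication — module condition}, keep the hypothesis, fix $u\in A^n(G)$, and let $E=\{x\in G^n:u(x)=0\}$, which is closed. Then $u\in I_{A^n(G)}(E)$, and since $E$ is a set of synthesis, $u\in J_{A^n(G)}(E)=\overline{j_{A^n(G)}(E)}$; so I choose $v_k\in j_{A^n(G)}(E)$ with $v_k\to u$ in $A^n(G)$. As $G$ is discrete, each $v_k$ has finite support $F_k$, and $F_k\cap E=\emptyset$ forces $u(x)\neq 0$ for all $x\in F_k$. Since $C_c(G^n)\subseteq A(G^n)\subseteq A^n(G)$ by Lemma~\ref{FunctProp}(vi), the finitely supported function $w_k:=\sum_{x\in F_k}\frac{v_k(x)}{u(x)}\,\delta_x$ lies in $A^n(G)$, and a pointwise computation gives $uw_k=v_k$. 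Thus $v_k\in uA^n(G)$ and $v_k\to u$, that is, $u\in\overline{uA^n(G)}$, which completes the forward implication.

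The step I expect to be the main obstacle is locating the closed subgroup $H$ of the prescribed type inside a non-discrete $G$: this is classical but genuinely uses the structure theory of locally compact groups, and one must take care to invoke the injection theorem with the module $X=VN^n(G)$ so that $X$-synthesis collapses to ordinary spectral synthesis. The remaining ingredients — Lemma~\ref{clopen}, Corollaries~\ref{MallThm1} and~\ref{MallThm2}, and the elementary finite-support argument valid for discrete $G$ — are routine.
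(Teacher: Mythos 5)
Your proposal is correct, and its skeleton matches the paper's: the reverse implication is exactly the paper's (discreteness makes every closed set clopen, then Lemma~\ref{clopen} applies), and the forward implication is settled by combining structure theory, the injection theorem, and Corollaries~\ref{MallThm1} and~\ref{MallThm2}. There are, however, two genuine differences. For discreteness, the paper does not work only with subgroups: when $G_0\neq\{e\}$ it passes to a Lie quotient $G_0/K$ and uses the inverse projection result (Lemma~\ref{IPTConv}) to pull non-synthesis back up, then finds a closed non-discrete abelian subgroup of that Lie group via one-parameter subgroups and invokes Corollary~\ref{MallThm2}; you instead locate a closed subgroup of $G$ itself that is either a copy of $\mathbb{R}$ or infinite compact, so you only need the injection theorem (with $X=VN^n(G)$, as you correctly note) and never Lemma~\ref{IPTConv}. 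The price is that your case ``$G_0$ non-compact connected implies a closed copy of $\mathbb{R}$'' rests on Iwasawa's structure theorem, which is of comparable depth to the Gleason--Yamabe approximation the paper uses, so neither route is strictly more elementary; yours is slightly cleaner in that it uses only one transfer principle. The second difference is in your favour: the paper dismisses the module condition $u\in\overline{uA^n(G)}$ with ``it is clear,'' whereas you give an actual argument (synthesis of the zero set $Z(u)$, approximation by finitely supported $v_k\in j_{A^n(G)}(Z(u))$, and the explicit multiplier $w_k=\sum_{x\in F_k}\frac{v_k(x)}{u(x)}\delta_x$ with $uw_k=v_k$), which is valid for discrete $G$ since finitely supported functions lie in $A(G^n)\subseteq A^n(G)$ and multiplication in $A^n(G)$ is pointwise; this fills a step the paper leaves implicit.
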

	\begin{proof}
		Suppose that every closed subset of $G^n$ is a set of spectral synthesis for $A^n(G).$ Let $G_0$ denote the connected component of the identity in $G.$ By Theorem \ref{Inj_Thm_Sp_Syn}, every closed subset of $(G_0)^n$ is a set of spectral synthesis for $A^n(G_0).$ If $G_0$ is non-trivial, then it contains a proper normal subgroup $K$ such that $G_0/K$ is a non-trivial connected Lie group. Thus, by Lemma \ref{IPTConv}, every closed subset of $(G_0/K)^n$  is a set of spectral synthesis for $A^n(G_0/K).$ Now, it is well known that every nontrivial connected lie group contains a closed non-discrete abelian subgroup. Let $H$ denote that subgroup of $G_0/K.$ Again, by \ref{Inj_Thm_Sp_Syn}, every closed subset of $H$ is a set of spectral synthesis for $A^n(H)$ which contradicts Corollary \ref{MallThm2}. This forces us to conclude that $G_0=\{e\}$ and therefore, $G$ is totally connected.
		
		We now claim that $G$ is discrete. Suppose that $G$ admits an infinite compact subgroup $K.$ Then, by Corollary \ref{MallThm1}, $K^n$ contains a closed subset $E$ which is not a set of spectral synthesis for $A^n(K)$ and hence for $A^n(G)$ leading to a contradiction. Thus, we conclude that $G$ is discrete. Further, as every closed set is a set of spectral synthesis for $A^n(G),$ it is clear that $u\in\overline{uA^n(G)}$ for every $u\in A^n(G).$
		
		Converse is an easy consequence of Lemma \ref{clopen}.
	\end{proof}
	
	\section*{Acknowledgement}
	The second author would like to thank the Science and Engineering Board, India, for the core research grant with file no. CRG/2021/003087/MS.

\end{document}